\numberwithin{equation}{section}
\newtheorem{Theorem}{Theorem}[section]
\newtheorem{Lemma}[Theorem]{Lemma}
\newtheorem{Proposition}[Theorem]{Proposition}
 { \theoremstyle{definition}
\newtheorem{Definition}[Theorem]{Definition}
\newtheorem{Example}[Theorem]{Example}
\newtheorem{Remark}[Theorem]{Remark} }
\let\ssize\scriptstyle
\newif\ifFIRST\newdimen\MAXright\MAXright0pt
\def\sdynkin{\bgroup\eightpoint\dynkin}
\def\endsdynkin{\enddynkin\egroup}
\def\dynkin{\bgroup\FIRSTtrue\hskip.5em\setbox1\hbox{$\diagup$}%
\setbox2\hbox{$\diagdown$}%
\setbox0\hbox to2\wd1{\hrulefill}%
\setbox3\hbox{$\bullet$}%
\setbox4\hbox{$\times$}%
\setbox7\hbox{$\circ$}
\def\whiteroot##1{\ifFIRST\setbox5\hbox{$##1$}\ifdim\wd5>1.3em
\hskip-.5em\hskip.5\wd5\fi\fi\FIRSTfalse
\hskip-.25em\raise1.5\wd3\hbox to0pt{\hss\hskip.45em$
\ssize##1$\hss}\copy7\hskip-.25em\setbox6\hbox{$##1$}
\MAXright\wd6}
\def\root##1{\ifFIRST\setbox5\hbox{$##1$}\ifdim\wd5>1.3em%
\hskip-.5em\hskip.5\wd5\fi\fi\FIRSTfalse%
\hskip-.25em\raise1.5\wd3\hbox to0pt{\hss\hskip.45em$%
\ssize##1$\hss}\copy3\hskip-.25em\setbox6\hbox{$##1$}%
\MAXright\wd6}%
\def\whitedroot##1{\ifFIRST\setbox5\hbox{$##1$}\ifdim\wd5>1.3em
\hskip-.5em\hskip.5\wd5\fi\fi\FIRSTfalse
\hskip-.25em\lower1.8\wd3\hbox to0pt{\hss\hskip.45em$
\ssize##1$\hss}\copy7\hskip-.25em\setbox6\hbox{$##1$}
\MAXright\wd6}%
\def\whiterroot##1{\hskip-.25em\copy7\hbox to0pt{\hskip.3em$\ssize##1$\hss}%
\hskip-.25em\setbox6\hbox{\hskip.6em$##1##1$}%
\MAXright\wd6}%
\def\droot##1{\ifFIRST\setbox5\hbox{$##1$}\ifdim\wd5>1.3em%
\hskip-.5em\hskip.5\wd5\fi\fi\FIRSTfalse%
\hskip-.25em\lower1.8\wd3\hbox to0pt{\hss\hskip.45em$%
\ssize##1$\hss}\copy3\hskip-.25em\setbox6\hbox{$##1$}%
\MAXright\wd6}%
\def\rroot##1{\hskip-.25em\copy3\hbox to0pt{\hskip.3em$\ssize##1$\hss}%
\hskip-.25em\setbox6\hbox{\hskip.6em$##1##1$}%
\MAXright\wd6}%
\def\norroot##1{\hskip-.36em\copy4\hbox to0pt{\hskip.3em$\ssize##1$\hss}%
\hskip-.48em\setbox6\hbox{\hskip.6em$##1##1$}%
\MAXright\wd6}%
\def\noroot##1{\ifFIRST\setbox5\hbox{$##1$}\ifdim\wd5>1.3em%
\hskip-.5em\hskip.5\wd5\fi\fi\FIRSTfalse%
\hskip-.36em\raise1.5\wd3\hbox to0pt{\hss\hskip.6em$%
\ssize##1$\hss}\copy4\hskip-.38em\setbox6\hbox{$##1$}%
\MAXright\wd6}%
\def\nodroot##1{\ifFIRST\setbox5\hbox{$##1$}\ifdim\wd5>1.3em%
\hskip-.5em\hskip.5\wd5\fi\fi\FIRSTfalse%
\hskip-.36em\lower1.8\wd3\hbox to0pt{\hss\hskip.6em$%
\ssize##1$\hss}\copy4\hskip-.38em\setbox6\hbox{$##1$}%
\MAXright\wd6}%
\def\nolink{\hskip\wd0}
\def\link{\raise.22em\copy0}%
\def\llink##1{\raise.32em\copy0\hskip-\wd0%
\raise.12em\copy0\hskip-.5\wd0\hbox to0pt{\hss$##1$\hss}\hskip.5\wd0}%
\def\lllink##1{\raise.22em\copy0\hskip-\wd0\raise.32em\copy0\hskip-\wd0%
\raise.12em\copy0\hskip-.5\wd0\hbox to0pt{\hss$##1$\hss}\hskip.5\wd0}%
\def\rootupright##1{\hbox to0pt{\raise.45em\copy1\hskip-.25em\raise1.3\ht1%
\hbox{\copy3\hskip.3em$\ssize##1$}\hss}%
\setbox6\hbox{\hskip.6em\copy1\copy1$##1##1$}%
\ifdim\MAXright<\wd6\MAXright\wd6\fi}%
\def\whiterootupright##1{\hbox to0pt{\raise.45em\copy1\hskip-.25em\raise1.3\ht1
\hbox{\copy7\hskip.3em$\ssize##1$}\hss}
\setbox6\hbox{\hskip.6em\copy1\copy1$##1##1$}
\ifdim\MAXright<\wd6\MAXright\wd6\fi}
\def\norootupright##1{\hbox to0pt{\raise.45em\copy1\hskip-.36em\raise1.3\ht1%
\hbox{\copy4\hskip.3em$\ssize##1$}\hss}%
\setbox6\hbox{\hskip.6em\copy1\copy1$##1##1$}%
\ifdim\MAXright<\wd6\MAXright\wd6\fi}%
\def\rootdownright##1{\hbox to0pt{\raise-.5em\copy2\hskip-.25em\raise-1.35\ht1%
\hbox{\copy3\hskip.3em$\ssize##1$}\hss}\setbox6%
\hbox{\hskip.6em\copy2\copy2$##1##1$}%
\ifdim\MAXright<\wd6\MAXright\wd6\fi}%
\def\whiterootdownright##1{\hbox to0pt{\raise-.5em\copy2\hskip-.25em\raise-1.35\ht1
\hbox{\copy7\hskip.3em$\ssize##1$}\hss}\setbox6
\hbox{\hskip.6em\copy2\copy2$##1##1$}
\ifdim\MAXright<\wd6\MAXright\wd6\fi}
\def\rootdown##1{\hbox to0pt{\hskip-.05em\vrule height.25em depth.65em%
\hskip-.25em\raise-.95em\hbox{\copy3\hskip.3em$\ssize##1$}\hss}%
\setbox6\hbox{$##1$}%
\ifdim\MAXright<\wd6\MAXright\wd6\fi}%
\def\whiterootdown##1{\hbox to0pt{\hskip-.05em\vrule height.25em depth.65em
\hskip-.25em\raise-.95em\hbox{\copy7\hskip.3em$\ssize##1$}\hss}
\setbox6\hbox{$##1$}
\ifdim\MAXright<\wd6\MAXright\wd6\fi}
\def\dots{\hskip.5em\cdots\hskip.5em}}%
\def\enddynkin{\ifdim\MAXright>1em\hskip.5\MAXright\else\hskip.5em\fi\egroup}%
\dynkin \root{a_1}\link\root{a_2}\dots\root{a_{n-1}}\link\root{a_n}
\dynkin \root{}\lllink>\root{}\enddynkin$
\dynkin \root{a}\link\root{b}\rootupright{c}\rootdownright{d}\enddynkin$
\dynkin \root{}\link\root{}\rootupright{}\link\root{}\enddynkin$.
\dynkin \whiteroot{0}\link\noroot{0}\dots\root{0}\link\whiteroot{0}\rootupright{0}\whiterootdownright{0}\enddynkin$}. 
\begin{document}

\newcommand{\arXivNumber}{1705.09469}

\renewcommand{\PaperNumber}{012}

\FirstPageHeading

\ShortArticleName{$k$-Dirac Complexes}

\ArticleName{$\boldsymbol{k}$-Dirac Complexes}

\Author{Tom\'a\v{s} SALA\v{C}}

\AuthorNameForHeading{T.~Sala\v{c}}

\Address{Mathematical Institute, Charles University, Sokolovsk\'a 49/83, Prague, Czech Republic}
\Email{\href{mailto:salac@karlin.mff.cuni.cz}{salac@karlin.mff.cuni.cz}}
\URLaddress{\url{http://www.karlin.mff.cuni.cz/~salac/}}

\ArticleDates{Received June 01, 2017, in f\/inal form February 06, 2018; Published online February 16, 2018}

\Abstract{This is the f\/irst paper in a series of two papers. In this paper we construct complexes of invariant dif\/ferential operators which live on homogeneous spaces of $|2|$-graded parabolic geometries of some particular type. We call them $k$-Dirac complexes. More explicitly, we will show that each $k$-Dirac complex arises as the direct image of a relative BGG sequence and so this f\/its into the scheme of the Penrose transform. We will also prove that each $k$-Dirac complex is formally exact, i.e., it induces a long exact sequence of inf\/inite (weighted) jets at any f\/ixed point. In the second part of the series we use this information to show that each $k$-Dirac complex is exact at the level of formal power series at any point and that it descends to a resolution of the $k$-Dirac operator studied in Clif\/ford analysis.}

\Keywords{Penrose transform; complexes of invariant dif\/ferential operators; relative BGG complexes; formal exactness; weighted jets}

\Classification{58J10; 32N05; 32L25; 35A22; 53C28; 58A20}

\section{Introduction}

Let $h$ be a non-degenerate, symmetric and $\mathbb{C}$-bilinear form on~$\mathbb{C}^{2m}$. The Grassmannian variety~$M$ of totally isotropic $k$-dimensional subspaces in $\mathbb{C}^{2m}$ is a homogeneous space of a~$|2|$-graded parabolic geometry. We assume throughout the paper that $n:=m-k\ge k\ge2$. We will show that on~$M$ there is a complex of invariant dif\/ferential operators which we call the $k$-\textit{Dirac complex}.
The main result of this article is (see Theorem~\ref{theorem formal exactness}) that the complex is formally exact (as explained above in the abstract) in the sense of~\cite{Sp}.

This result is crucial for an application in \cite{S} where it is shown that the complex is exact with formal power series at any f\/ixed point and that it descends (as outlined in the recent series \cite{CSaI, CSaII,CSaIII}) to a resolution of the $k$-Dirac operator studied in Clif\/ford analysis (see~\cite{CSSS,SSSL}). As a~potential application of the resolution, there is an open problem of characterizing the domains of monogenicity, i.e., an open set~$\mathcal{U}$ is a domain of monogenicity if there is no open set~$\mathcal{U}'$ with~$\mathcal{U}\subsetneq\mathcal{U}'$ such that each monogenic function\footnote{A monogenic function is a null solution of the $k$-Dirac operator.} in~$\mathcal{U}$ extends to a monogenic function in $\mathcal{U}'$. Recall from \cite[Section~4]{H} that the Dolbeault resolution together with some $L^2$ estimates are crucial in a proof of the statement that any pseudoconvex domain is a domain of holomorphy.

The $k$-Dirac complexes belong to the singular inf\/initesimal character and so the BGG machinery introduced in~\cite{CaSS} is not available. However, we will show that each $k$-Dirac complex arises as the direct image of a relative BGG sequence (see \cite{CSo,CSoI} for a recent publication on this topic) and so, this paper f\/its into the scheme of the Penrose transform (see \cite{BE,WW}). In particular, we will work here only in the setting of complex parabolic geometries.

The machinery of the Penrose transform is a main tool used in \cite{B}. The main result of that article is a construction of families of locally exact complexes of invariant dif\/ferential operators on quaternionic manifolds. One of these quaternionic complexes can (see \cite{BAS,BS,CSS}) be identif\/ied with a resolution of the $k$-\textit{Cauchy--Fueter operator} which has been intensively studied in Clif\/ford analysis (see again \cite{CSSS,SSSL}). In order to prove the local exactness of this quaternionic complex, one uses that an almost quaternionic structure is a $|1|$-graded parabolic geometry and the theory of constant coef\/f\/icient operators from~\cite{Na}.

Unfortunately, the parabolic geometry on $M$ is $|2|$-graded and so there is canonical a 2-step f\/iltration of the tangent bundle of $M$ given by a bracket generating distribution. With such a~structure, it is more natural to work with weighted jets (see~\cite{Mo}) rather than usual jets and we use this concept also here, i.e., we prove the formal exactness of the $k$-Dirac complexes with respect to the weighted jets.
Nevertheless, we will prove in~\cite{S} that the formal exactness of the $k$-Dirac complex (or more precisely the exactness of~(\ref{les of graded jets II}) for each $\ell+j\ge0$) is enough to conclude that it descends to a resolution of the $k$-Dirac operator.

We consider here only the even case $\mathbb{C}^{2m}$ as, due to the representation theory, the Penrose transform does not work in odd dimension $\mathbb{C}^{2m+1}$ and it seems that this case has to be treated by completely dif\/ferent methods. The assumption $n\ge k$ is (see \cite{CSSS}) called the stable range. This assumption is needed only in Proposition~\ref{thm direct images} where we compute direct images of sheaves that appear in the relative BGG sequences. Hence, it is reasonable to expect that (see also~\cite{K}) the machinery of the Penrose transform provides formally exact complexes also in the unstable range $n<k$.

For the application in \cite{S}, we need to show that the $k$-Dirac complexes constructed in this paper give rise to complexes from \cite{TS} which live on the corresponding real parabolic geometries. This turns out to be rather easy since any linear $\mathrm{G}$-invariant operator is determined by a certain $\mathrm{P}$-equivariant homomorphism. As this correspondence works also in the smooth setting, passing from the holomorphic setting to the smooth setting is rather elementary.

The abstract approach of the Penrose transform is not very helpful when one is interested in local formulae of dif\/ferential operators.
Local formulae of the operators in the $k$-Dirac complexes can be found in~\cite{TS}. Notice that in this article we construct only one half of each complex from~\cite{TS}. This is due to the fact that the complex space of spinors decomposes into two irreducible $\mathfrak{so}(2n,\mathbb{C})$-sub-representations. The other half of each $k$-Dirac complex can (see Remark~\ref{remark PT for other Dirac}) be easily obtained by adapting results from this paper.

Finally, let us mention few more articles which deal with the $k$-Dirac complexes.
The null solutions of the f\/irst operator in the $k$-Dirac complex were studied in \cite{TSI,TSII}. The singular Hasse graphs and the corresponding homomorphisms of generalized Verma modules were computed in~\cite{F}.

\subsection*{Notation}
\begin{itemize}\itemsep=0pt
\item $M(n,k,\mathbb{C})$ matrices of size $n\times k$ with complex coef\/f\/icients;
\item $M(n,\mathbb{C}):=M(n,n,\mathbb{C})$;
\item $A(n,\mathbb{C}):=\{A\in M(n,\mathbb{C})\,|\, A^T=-A\}$;
\item $1_n$ identity $n\times n$-matrix;
\item $[v_1,\dots,v_\ell]$ linear span of vectors $v_1,\dots,v_\ell$.
\end{itemize}


\section{Preliminaries}\label{section Preliminaries}

In Section \ref{section Preliminaries} we will review some well known material. Namely, in Section~\ref{section review} we will summarize some theory of complex parabolic geometries. We will recall in Section~\ref{section wdo} the concept of weighted jets on f\/iltered manifolds and in Section~\ref{section ideal sheaf} the def\/inition of the normal bundle associated to analytic subvariety and the formal neighborhood. In Section~\ref{section PT} we will give a~short summary of the Penrose transform.

See \cite{CS} for a thorough introduction into the theory of parabolic geometries. The concept of weighted jets was originally introduced in the smooth setting by Tohru Morimoto, see for example~\cite{Mo}. Sections~\ref{section ideal sheaf} and~\ref{section PT} were taken mostly from \cite{BE,WW}.

\subsection{Review of parabolic geometries}\label{section review}
Let $\mathfrak{g}$ be a complex semi-simple Lie algebra, $\mathfrak{h}$ be a Cartan subalgebra, $\triangle$ be the associated set of roots, $\triangle^+$ be a set of positive roots and $\triangle^0=\{\alpha_1,\dots,\alpha_m\}$ be the associated set of (pairwise distinct) simple roots.
We will denote by $\mathfrak{g}_\alpha$ the root space associated to $\alpha\in\triangle$ and we will write $\alpha>0$ if $\alpha\in\triangle^+$ and $\alpha<0$ if $-\alpha\in\triangle^+$. Given $\alpha\in\triangle$, there are unique integers $\lambda_1,\dots,\lambda_m$ such that $\alpha=\lambda_1\alpha_1+\dots+\lambda_m\alpha_m$. If $\Sigma\subset\triangle^0$, then the integer $ht_\Sigma(\alpha):=\sum\limits_{i\colon \alpha_i\in\Sigma}\lambda_i$ is called the $\Sigma$-\textit{height} of $\alpha$. We put $\mathfrak{g}_i:=\bigoplus\limits_{\alpha\colon ht_\Sigma(\alpha)=i}\mathfrak{g}_{\alpha}$. Then there is an integer $k\ge0$ such that $\mathfrak{g}_k\ne\{0\}$, $\mathfrak{g}_\ell=\{0\}$ whenever $|\ell|>k$ and
\begin{gather}\label{k-gradation}
\mathfrak{g}=\mathfrak{g}_{-k}\oplus\mathfrak{g}_{-k+1}\oplus\dots\oplus\mathfrak{g}_{k-1}\oplus\mathfrak{g}_k.
\end{gather}
The direct sum decomposition (\ref{k-gradation}) is the $|k|$-\textit{grading} on $\mathfrak{g}$ associated to $\Sigma$.

Since $[\mathfrak{g}_\alpha,\mathfrak{g}_\beta]\subset\mathfrak{g}_{\alpha+\beta}$ for every $\alpha,\beta\in\triangle$, it follows that $[\mathfrak{g}_i,\mathfrak{g}_j]\subset\mathfrak{g}_{i+j}$ for each \smash{$i,j\in\mathbb{Z}$}. In particular, it follows that $\mathfrak{g}_0$ is a subalgebra and it can be shown that it is always reductive, i.e., $\mathfrak{g}_0=\mathfrak{g}_0^{ss}\oplus\mathfrak z(\mathfrak{g}_0)$ where $\mathfrak{g}_0^{ss}:=[\mathfrak{g}_0,\mathfrak{g}_0]$ is semi-simple and $\mathfrak z(\mathfrak{g}_0)$ is the center (see \cite[Corollary~2.1.6]{CS}). Moreover, each subspace $\mathfrak{g}_i$ is $\mathfrak{g}_0$-invariant and the Killing form of $\mathfrak{g}$ induces an isomorphism $\mathfrak{g}_{-i}\cong\mathfrak{g}_i^\ast$ of $\mathfrak{g}_0$-modules where $ ^\ast$ denotes the dual module.
We put
\begin{gather*}
\mathfrak{g}^i:=\bigoplus_{j\ge i} \mathfrak{g}_j, \qquad \mathfrak{g}_-:=\mathfrak{g}_{-k}\oplus\dots\oplus\mathfrak{g}_{-1}, \qquad \mathfrak{p}_\Sigma:=\mathfrak{g}^0\qquad \mathrm{and}\qquad \mathfrak{p}_+:=\mathfrak{g}^1.
\end{gather*}
Then $\mathfrak{p}_\Sigma$ is the \textit{parabolic subalgebra} associated to the $|k|$-grading and $\mathfrak{p}_\Sigma=\mathfrak{g}_0\oplus\mathfrak{p}_+$ is known as the Levi decomposition (see \cite[Section~2.2]{BE}). This means that $\mathfrak{p}_+$ is the nilradical\footnote{Recall that the nilradical is a maximal nilpotent ideal and that it is unique.} and that~$\mathfrak{g}_0$ is a maximal reductive subalgebra called the \textit{Levi factor}. It is clear that each subspace~$\mathfrak{g}^i$ is $\mathfrak{p}_\Sigma$-invariant and that $\mathfrak{g}_-$ is a nilpotent subalgebra. Moreover, it can be shown that, as a~Lie algebra, $\mathfrak{g}_-$ is generated by~$\mathfrak{g}_{-1}$.

The algebra $\mathfrak{b}:=\mathfrak{p}_{\triangle^0}$ is called the \textit{standard Borel subalgebra}. A subalgebra of $\mathfrak{g}$ is called \textit{standard parabolic} if it contains $\mathfrak{b}$ and in particular, $\mathfrak{p}_\Sigma$ is such an algebra. More generally, a~subalgebra of $\mathfrak{g}$ is called a \textit{Borel subalgebra} and a \textit{parabolic subalgebra} if it is conjugated to the standard Borel subalgebra and to a standard parabolic subalgebra, respectively. We will for brevity sometimes write~$\mathfrak{p}$ instead of~$\mathfrak{p}_\Sigma$.

Let $s_i$ be the simple ref\/lection associated to $\alpha_i$, $i=1,2,\dots,m$, $W_\mathfrak{g}$ be the Weyl group of $\mathfrak{g}$ and $W_\mathfrak{p}$ be the subgroup of $W_\mathfrak{g}$ generated by $\{s_i\colon \alpha_i\not\in\Sigma\}$. Then $W_\mathfrak{p}$ is isomorphic to the Weyl group of $\mathfrak{g}_0^{ss}$ and the directed graph that encodes the Bruhat order on $W_\mathfrak{g}$ contains a subgraph called the \textit{Hasse diagram $W^\mathfrak{p}$ attached to $\mathfrak{p}$} (see \cite[Section 4.3]{BE}). The vertices of $W^\mathfrak{p}$ consist of those $w\in W_\mathfrak{g}$ such that $w.\lambda$ is $\mathfrak{p}$-dominant for any $\mathfrak{g}$-dominant weight $\lambda$ where the dot stands for the af\/f\/ine action, namely, $w.\lambda=w(\lambda+\rho)-\rho$ where $\rho$ is the lowest form. It turns out that each right coset of $W_\mathfrak{p}$ in $W_\mathfrak{g}$ contains a unique element from $W^\mathfrak{p}$ and it can be shown that this is the element of minimal length (see \cite[Lemma 4.3.3]{BE}). This identif\/ies $W^\mathfrak{p}$ with $W_\mathfrak{p}\backslash W_\mathfrak{g}$.

We will need also the relative case. Assume that $\Sigma'\subset\triangle^0$ and put $\mathfrak{r}:=\mathfrak{p}_{\Sigma'}$. Then $\mathfrak{q}:=\mathfrak{r}\cap\mathfrak{p}=\mathfrak{p}_{\Sigma\cup\Sigma'}$ is a standard parabolic subalgebra and $\mathfrak l:=\mathfrak{g}_0^{ss}\cap\mathfrak{q}$ is a parabolic subalgebra of $\mathfrak{g}_0^{ss}$ (see \cite[Section 2.4]{BE}). The def\/inition of the Hasse diagram attached to $\mathfrak{p}$ applies also to the pair $(\mathfrak{g}_0^{ss},\mathfrak l)$, namely an element $w\in W_\mathfrak{p}$ (as \cite[Section 4.4]{BE}) belongs to the \textit{relative Hasse diagram} $W^\mathfrak{q}_\mathfrak{p}$ if it is the element of minimal length in its right coset of $W_\mathfrak{q}$ in $W_\mathfrak{p}$. Hence, $W^\mathfrak{q}_\mathfrak{p}$ is a subset of $W_\mathfrak{g}$ which can be naturally identif\/ied with $W_\mathfrak{q}\backslash W_\mathfrak{p}$.

There is (up to isomorphism) a unique connected and simply connected complex Lie group~$\mathrm{G}$ with Lie algebra $\mathfrak{g}$. Assume that $\Sigma=\{\alpha_{i_1},\dots,\alpha_{i_j}\}$. Let $\omega_1,\dots,\omega_m$ be the fundamental weights associated to the simple roots and
$\mathbb{V}$ be an irreducible $\mathfrak{g}$-module with highest weight $\lambda:=\omega_{i_1}+\dots+\omega_{i_j}$. Since any $\mathfrak{g}$-representation integrates to $\mathrm{G}$, $\mathbb{V}$ is also a $\mathrm{G}$-module. The action descends to the projective space $\mathbb P(\mathbb{V})$ and the stabilizer of the line spanned by a non-zero highest weight vector $v$ is the associated \textit{parabolic subgroup} $\mathrm{P}$. This is by def\/inition a closed subgroup of $\mathrm{G}$ and its Lie algebra is $\mathfrak{p}$. The homogeneous space $\mathrm{G}/\mathrm{P}$ is biholomorphic to the $\mathrm{G}$-orbit of $[v]\in\mathbb P(\mathbb{V})$ and since it is completely determined by~$\Sigma$, we denote it by crossing in the Dynkin diagram of $\mathfrak{g}$ the simple roots from $\Sigma$. We will for brevity put $M:=\mathrm{G}/\mathrm{P}$ and denote by $\textbf{p}\colon \mathrm{G}\rightarrow M$ the canonical projection.

On $\mathrm{G}$ lives a tautological $\mathfrak{g}$-valued 1-form $\omega$ which is known as the \textit{Maurer-Cartan form}. This form is $\mathrm{P}$-equivariant in the sense that for each $p\in\mathrm{P}\colon (r^p)^\ast\omega=\operatorname{Ad}(p^{-1})\circ\omega$ where $\operatorname{Ad}$ is the adjoint representation and $r^p$ is the principal action of $p$. If $\mathbb{V}$ is a subspace of $\mathfrak{g}$ and $g\in\mathrm{G}$, then $\omega^{-1}_g(\mathbb{V})$ is a subspace of~$T_g\mathrm{G}$ and the disjoint union $\bigcup_{g\in\mathrm{G}}\omega_g^{-1}(\mathbb{V})$ determines a distribution on~$\mathrm{G}$ which we for brevity denote by $\omega^{-1}(\mathbb{V})$. Since $T\textbf{p}=T\textbf{p}\circ Tr^p$, it follows that $T\textbf{p}(\omega^{-1}(\mathbb{V}))$ is a well-def\/ined distribution on $M$ provided that $\mathbb{V}$ is $\mathrm{P}$-invariant. In particular, this applies to~$\mathfrak{g}^i$ and we put $F_{i}:=T\textbf{p}(\omega^{-1}(\mathfrak{g}^{i}))$. Since $\ker(T\textbf{p})=\omega^{-1}(\mathfrak{p})$, it follows that the f\/iltration $\mathfrak{g}^{-k}/\mathfrak{p}\supset\dots\supset\mathfrak{g}^{-1}/\mathfrak{p}\supset\mathfrak{g}^0/\mathfrak{p}$ gives a~f\/iltration $TM=F_{-k}\supset F_{-k+1}\supset\dots\supset F_{-1}\supset F_0=\{0\}$ of the tangent bundle $TM$ where~$\{0\}$ is the zero section.
The \textit{graded tangent bundle} associated to the f\/iltration $\{F_{-i}\colon i=k,\dots,0\}$ is $gr(TM):=\bigoplus_{i=-k}^{-1}gr_i(TM)$ where $gr_i(TM):=F_{i}/F_{i+1}$. Since $M$ is the homogeneous model, we have the following:
\begin{itemize}\itemsep=0pt
 \item the f\/iltration is compatible\footnote{Filtrations which satisfy this property are called \textit{regular}.} with the Lie bracket of vector f\/ields in the sense that the commutator of a section of $F_i$ and a section of $F_j$ is a section of $F_{i+j}$,
 \item the Lie bracket descends to a vector bundle map $\mathcal{L}\colon \Lambda^2gr(TM)\rightarrow gr(TM)$, called the \textit{Levi form}, which is homogeneous of degree zero and
 \item $(gr(T_xM),\mathcal{L}_x)$, $x\in M$ is a nilpotent Lie algebra isomorphic to $\mathfrak{g}_-$.
\end{itemize}
Hence, $(gr(TM),\mathcal{L})$ is a locally trivial bundle of nilpotent Lie algebras with typical f\/iber $\mathfrak{g}_-$ and it follows that $F_{-1}$ is a bracket generating distribution.

We denote by $T^\ast M=\Lambda^{1,0}T^\ast M$ the vector bundle dual to $TM$, i.e., the f\/iber over $x\in M$ is the space of $\mathbb{C}$-linear maps $T_x M\rightarrow\mathbb{C}$. The f\/iltration of $TM$ induces a f\/iltration $T^\ast M=F_1\supset F_2\supset\dots\supset F_{k+1}=\{0\}$ where $F_{i}:=F_{-i+1}^\perp$ is the annihilator of $F_{-i+1}$. We put $gr_i(T^\ast M):= F_i/F_{i+1}$, $i=1,2,\dots,k$ so that $gr(T^\ast M)=\bigoplus_{i=1}^kgr_i(T^\ast M)$ is the associated graded vector bundle and $gr_i(T^\ast M)$ is isomorphic to the dual of~$gr_i(TM)$.

\subsection{Weighted dif\/ferential operators}\label{section wdo}
Let $M$ be the homogeneous space with the regular f\/iltration $\{F_{-j}\colon  j=0,\dots,k\}$ as in Section~\ref{section review}.
As $M$ is a complex manifold, $TM_\mathbb{C}:=TM\otimes\mathbb{C}=T^{1,0}M\oplus T^{0,1}M$ where the f\/irst and the second summand is the holomorphic and the anti-holomorphic part\footnote{The holomorphic and anti-holomorphic part is the $-i$ and the $+i$-eigenspace, respectively, for the canonical almost complex structure on~$TM_\mathbb{C}$.}, respectively. As each vector bundle~$F_{-j}$ is a~holomorhic sub-bundle of $TM$, we have $F_{-j}\otimes\mathbb{C}=F^{1,0}_{-j}\oplus F^{0,1}_{-j}$ as above.

Let $\mathcal{U}$ be an open subset of $M$ and $X$ be a holomorphic vector f\/ield on $\mathcal{U}$. The \textit{weighted order} $\mathfrak{wo}(X)$ of~$X$ is the smallest integer $j$ such that $X\in\Gamma(F_{-j}^{1,0}|_\mathcal{U})$. A dif\/ferential operator $D$ acting on the space $\mathcal{O}(\mathcal{U})$ of holomorphic functions on $\mathcal{U}$ is called a \textit{differential operator of weighted order at most} $r$ if for each $x\in \mathcal{U}$ there is an open neighborhood $\mathcal{U}_x$ of $x$ with a local framing\footnote{This means that the holomorphic vector f\/ields $X_1,\dots,X_p$ trivialize $T^{1,0}M$ over $\mathcal{U}_x$.} $\{X_1,\dots,X_p\}$ by holomorphic vector f\/ields such that
\begin{gather*}
D|_{\mathcal{U}_x}=\sum_{a\in\mathbb N_{0}^p} f_a X_1^{a_1}\cdots X_p^{a_p},
\end{gather*}
where $\mathbb N_0^p:=\{a=(a_1,\dots,a_p)\colon a_i\in\mathbb{Z},\ a_i\ge0,\ i=1,\dots,p\}$, $f_a\in\mathcal{O}(\mathcal{U}_x)$ and for all $a$ in the sum with $f_a$ non-zero: $\sum\limits_{i=1}^pa_i.\mathfrak{wo}(X_i)\le r$. We write $\mathfrak{wo}(D)\le r$.

Let $\mathcal{O}_x$ be the space of germs of holomorphic functions at $x$. We denote by $\mathfrak{F}_x^{i}$ the space of those germs $f\in\mathcal{O}_x$ such that $Df(x)=0$ for every dif\/ferential operator $D$ which is def\/ined on an open neighborhood of $x$ and $\mathfrak{wo}(D)\le i$. We put $\mathfrak{J}^i_x:=\mathcal{O}_x/\mathfrak{F}_{x}^{i+1}$, denote by $\mathfrak{j}^i_xf\in\mathfrak{J}^i_x$ the class of $f$ and call it the $i$-\textit{th weighted jet of} $f$. Then the disjoint union $\mathfrak{J}^i:=\cup_{x\in M}\mathfrak{J}_x^i$ is naturally a~holomorphic vector bundle over~$M$, the canonical vector bundle map $\mathfrak{J}^i\xrightarrow{\pi_i}\mathfrak{J}^{i-1}$ has constant rank and thus, its kernel $\mathfrak{gr}^i$ is again a~holomorphic vector bundle with f\/iber $\mathfrak{gr}^i_x$ over $x$. Notice that for each integer $i\ge0$ there is a short exact sequence $0\rightarrow\mathfrak{F}^{i+1}_x\rightarrow\mathfrak{F}^{i}_x\rightarrow\mathfrak{gr}^{i+1}_x\rightarrow0$ of vector spaces.

Assume that $V$ is a holomorphic vector bundle over $M$. We denote by $V^\ast$ the dual bundle, by $\langle-,-\rangle$ the canonical pairing between $V$ and $V^\ast$ and f\/inally, by $\mathcal{O}(V)_x$ the space of germs of holomorphic sections of $V$ at $x$. We def\/ine $\mathfrak{F}_{x}^iV$ as the space of germs $s\in\mathcal{O}(V)_x$ such that $\langle\lambda,s\rangle\in\mathfrak{F}_x^i$ for each $\lambda\in\mathcal{O}(V^\ast)_x$. We put $\mathfrak{J}^i_x:=\mathcal{O}(V)_x/\mathfrak{F}_{x}^{i+1}V$, denote by $\mathfrak{j}^i_xs\in\mathfrak{J}^i_xV$ the equivalence class of $s$ and call it the $i$-\textit{th weighted jet of} $s$. Then the disjoint union $\mathfrak{J}^i V:=\bigcup_{x\in M}\mathfrak{J}^i_xM$ is naturally a holomorphic vector bundle over $M$, the canonical bundle map $\mathfrak{J}^iV\xrightarrow{\pi_i}\mathfrak{J}^{i-1}V$ has constant rank and thus, its kernel $\mathfrak{gr}^i V$ is again a holomorphic vector bundle and we denote by $\mathfrak{gr}^i_xV$ its f\/iber over $x$. As above, there is for each integer $i\ge0$ a short exact sequence $0\rightarrow\mathfrak{F}^{i+1}_x V\rightarrow\mathfrak{F}^{i}_xV\rightarrow\mathfrak{gr}^{i+1}_xV\rightarrow0$ and just as in the smooth case, there is a canonical linear isomorphism $\mathfrak{gr}^i_x\otimes V_x\rightarrow\mathfrak{gr}^i_xV$.

\begin{Remark}\label{remark usual jets}
If the f\/iltration is trivial, i.e., $F_{-1}=TM$, then the concept of weighted jets agrees with that of usual jets. In this case we will use calligraphic letters instead of Gothic letters, i.e., we write $\mathcal F^i$ and $\mathcal J^i$ and $gr^i$ and $j^i_xf$ instead of $\mathfrak{F}^i$ and $\mathfrak{J}^i$ and $\mathfrak{gr}^i$ and $\mathfrak{j}^i_xf$, respectively. The vector bundle $gr^i$ is canonically isomorphic to the $i$-th symmetric power $S^iT^\ast M$.
\end{Remark}

Assume that there is a $\mathrm{P}$-module $\mathbb{V}$ such that $V$ is isomorphic to the $\mathrm{G}$-homogeneous vector bundle $\mathrm{G}\times_\mathrm{P}\mathbb{V}$. Let $e$ be the identity element of $\mathrm{G}$. Then we call the point $x_0:=e\mathrm{P}$ the origin of $M$ and we put
\begin{gather}
\label{notation weighted jets over origin}
\mathfrak{J}^i\mathbb{V}:=\mathfrak{J}^i_{x_0}V \qquad \mathrm{and}\qquad \mathfrak{gr}^i\mathbb{V}:=\mathfrak{gr}^i_{x_0}V.
\end{gather}
There are linear isomorphisms
\begin{gather}\label{isom of graded jets over origin}
\mathfrak{gr}^r\mathbb{V}\cong\mathfrak{gr}^{r}_{x_0}\otimes\mathbb{V} \cong\bigoplus_{i_1+2i_2+\dots+ki_k=r}S^{i_1}\mathfrak{g}_1\otimes S^{i_2}\mathfrak{g}_2\otimes\dots \otimes S^{i_k}\mathfrak{g}_k\otimes\mathbb{V}.
\end{gather}

We will be interested in the sub-bundle $ S^i gr_1(T^\ast M)\otimes V$ of $\mathfrak{gr}^iV$. Notice that the f\/iber of this sub-bundle over $x\in M$ is $\{\mathfrak{j}^i_xf\colon f\in\mathcal{O}(V)_x,\ j^{i-1}_xf=0\}$, i.e., the vector space of all weighted $i$-th jets of germs of holomorphic sections at $x$ whose usual $(i-1)$-th jet vanishes. The f\/iber of this bundle over $x_0$ is isomorphic to $S^i\mathfrak{g}_1\otimes\mathbb{V}$ and we denote it for brevity by $gr^i\mathbb{V}$.

Suppose that $\mathbb{W}$ is another $\mathrm{P}$-module and $W:=\mathrm{G}\times_\mathrm{P}\mathbb{W}$ be the associated homogeneous vector bundle. We say that the \textit{weighted order} of a linear dif\/ferential operator $D\colon \Gamma(V)\rightarrow\Gamma(W)$ is at most $r$ if for each $x\in M$, $ s\in\mathcal{O}(V)_x\colon \mathfrak{j}^r_xs=0\Rightarrow Ds(x)=0$. It is well known (see~\cite{Mo}) that $D$ induces for each $i\ge0$ a vector bundle map $\mathfrak{gr}^{i}V\rightarrow\mathfrak{gr}^{i-r}W$ where we agree that $\mathfrak{gr}^\ell W=0$ if $\ell <0$. The restriction of this map to the f\/ibers over the origin is a linear map
\begin{gather*}
 \mathfrak{gr} D\colon \ \mathfrak{gr}^{i}\mathbb{V}\rightarrow\mathfrak{gr}^{i-r}\mathbb{W}.
\end{gather*}

\subsection{Ideal sheaf of an analytic subvariety}\label{section ideal sheaf}

Let us f\/irst recall some basics from the theory of sheaves (see for example~\cite{WW}). Suppose that~$\mathcal{F}$ and~$\mathcal{G}$ are sheaves on topological spaces $X$ and $Y$, respectively, and that $\iota\colon X\rightarrow Y$ is a~continuous map. We denote by~$\mathcal{F}_x$ the stalk of $\mathcal{F}$ at $x\in X$ and by $\mathcal{F}(\mathcal{U})$ or by $\Gamma(\mathcal{U},\mathcal{F})$ the space of sections of $\mathcal{F}$ over an open set $\mathcal{U}$. Then the pullback sheaf $\iota^{-1}\mathcal{G}$ is a sheaf on $X$ and the direct image $\iota_\ast\mathcal{F}$ is a sheaf on~$Y$. The $q$-th direct image $\iota^q_\ast\mathcal{F}$ is a sheaf on $Y$, it is def\/ined as the sheaf\/if\/ication of the pre-sheaf $\mathcal{V}\mapsto H^q(\iota^{-1}(\mathcal{V}),\mathcal{F})$ where $\mathcal{V}$ is open in~$Y$.

Suppose now that $X$ and $Y$ are complex manifolds with structure sheaves of holomorphic functions $\mathcal{O}_X$ and $\mathcal{O}_Y$, respectively, that $\iota$ is holomorphic and that $\mathcal{G}$ is a sheaf of $\mathcal{O}_Y$-modules. Then $\iota^{-1}\mathcal{G}$ is in general not a sheaf of $\mathcal{O}_X$-modules. To f\/ix this problem, we use that $\iota^{-1}\mathcal{O}_Y$ is naturally a sub-sheaf of $\mathcal{O}_X$ and
 def\/ine a new sheaf $\iota^\ast\mathcal{G}:=\mathcal{O}_X\otimes_{\iota^{-1}\mathcal{O}_Y}\iota^{-1}\mathcal{G}$. Then $\iota^\ast\mathcal{G}$ is by construction a sheaf of $\mathcal{O}_X$-modules.

Now we can continue with the def\/inition of the ideal sheaf. Suppose that the holomorphic map $\iota$ is an embedding. The restriction $TY|_X$ contains the tangent bundle $TX$ of $X$. The normal bundle $N_X$ of $X$ in $Y$ is simply the quotient bundle, i.e., it f\/its into the short exact sequence
$0\rightarrow TX\rightarrow TY|_X\rightarrow NX\rightarrow0$ of holomorhic vector bundles.
Dually, the co-normal bundle $N^\ast$ f\/its into the short exact sequence
$0\rightarrow N^\ast \rightarrow T^\ast Y|_X\rightarrow T^\ast X\rightarrow0$.

The structure sheaf $\mathcal{O}_Y$ contains a sub-sheaf called the \textit{ideal sheaf} $\mathcal{I}_X$. If $\mathcal{V}$ is an open subset of $Y$, then $\mathcal{I}_X(\mathcal{V})=\{f\in\mathcal{O}_Y(\mathcal{V})\colon f=0$ on $\mathcal{V}\cap X\}$. Notice that $\mathcal{I}_X(\mathcal{V})$ is an ideal in the ring~$\mathcal{O}_Y(\mathcal{V})$ and hence, for each positive integer $i$ there is the sheaf $\mathcal{I}^i_X$ whose space of sections over $\mathcal{V}$ is $(\mathcal{I}_X(\mathcal{V}))^i$. Then there are short exact sequences of sheaves
\begin{gather*}
0\rightarrow\mathcal{I}_X\rightarrow\mathcal{O}_Y\rightarrow \iota_\ast\mathcal{O}_X\rightarrow0
\end{gather*}
and
\begin{gather*}
0\rightarrow\mathcal{I}^{i+1}_X\rightarrow\mathcal{I}^i_X\rightarrow\iota_\ast \mathcal{O}(S^i N^\ast)\rightarrow0,
\end{gather*}
where $S^iN^\ast$ is the $i$-th symmetric power of $N^\ast$ and we agree that $\mathcal{I}^0_X=\mathcal{O}_Y$. We put $\mathcal{F}^i_X:=\iota^{-1}\mathcal{I}_X^i$. As $\iota^{-1}$ is an exact functor, we get short exact sequences
\begin{gather*}
0\rightarrow\mathcal{F}_{X}\rightarrow\iota^{-1}\mathcal{O}_Y\rightarrow \mathcal{O}_X\rightarrow0\label{ses with pullback ideal sheaf}
\end{gather*}
and
\begin{gather*}
0\rightarrow\mathcal{F}^{i+1}_X\rightarrow\mathcal{F}^i_X\rightarrow\mathcal{O}(S^i N^\ast)\rightarrow0
\end{gather*}
of sheaves on $X$. Here we use that the adjunction morphism $\iota^{-1}\iota_\ast\mathcal{F}\rightarrow\mathcal{F}$ is an isomorphism when $\mathcal{F}=\mathcal{O}_X$ or $\mathcal{O}(S^iN^\ast)$.

Put $\mathcal{O}_X^{(i)}:=\mathcal{O}_Y/\mathcal{I}^{i+1}_X$.
The pair $(X,\mathcal{O}^{(i)}_X)$ is called the $i$-\textit{th formal neighborhood of $X$ in} $Y$. Then $\iota^{-1}\mathcal{O}_X^{(i)}\cong\iota^{-1}\mathcal{O}_Y/\mathcal{F}^{(i+1)}$ and since the support of $\mathcal{O}_X^{(i)}$ is contained in $X$, the sheaf $\iota^{-1}\mathcal{O}_X^{(i)}$ contains basically the same information as the sheaf $\mathcal{O}_X^{(i)}$. These sheaves will be crucial in this article.

\begin{Remark}
The stalk of $\mathcal{F}^i_X$ at $x\in X$ is equal to $\{f\in\mathcal{F}_x\colon j^i_xf=0\}$. Hence, if $X=x$ is a~point, the stalk of $\mathcal{F}^i_X$ at $x$ is $\{f\in(\mathcal{O}_Y)_x\colon j^i_xf=0\}$. Since any sheaf over a point is completely determined by its stalk, there is no risk of confusion with the notation set in Remark~\ref{remark usual jets}.
\end{Remark}

\subsection{The Penrose transform}\label{section PT}
Let us f\/irst set notation. Suppose that $\lambda\in\mathfrak{h}^\ast$ is a $\mathfrak{g}$-integral and $\mathfrak{p}$-dominant weight. Then there is (see \cite[Remark 3.1.6]{BE}) an irreducible $\mathrm{P}$-module $\mathbb{V}_\lambda$ with lowest weight $-\lambda$. We denote by $V_\lambda:=\mathrm{G}\times_\mathrm{P}\mathbb{V}_\lambda$ the induced vector bundle and by $\mathcal{O}_\mathfrak{p}(\lambda)$ the associated sheaf of holomorphic sections.

Suppose that $\mathfrak{p}$, $\mathfrak{r}$ are standard parabolic subalgebras. Then $\mathfrak{q}:=\mathfrak{r}\cap\mathfrak{p}$ is also a standard parabolic subalgebra and we denote by $\mathrm{P}$ and $\mathrm{R}$ and $\mathrm{Q}$ the associated parabolic subgroups with Lie algebras $\mathfrak{p}$ and $\mathfrak{r}$ and $\mathfrak{q}$, respectively, as explained in Section \ref{section review}. Then $\mathrm{Q}=\mathrm{R}\cap\mathrm{P}$ and there is a double f\/ibration diagram
\begin{gather*}
\label{double fibration diagram}
\xymatrix{&\ar[dl]^\eta \mathrm{G}/\mathrm{Q}\ar[dr]^\tau&\\
\mathrm{G}/\mathrm{R}&&\mathrm{G}/\mathrm{P},}
\end{gather*}
where $\eta$ and $\tau$ are the canonical projections. The space $\mathrm{G}/\mathrm{R}$ is called the \textit{twistor space~$TS$} and $\mathrm{G}/\mathrm{Q}$ the \textit{correspondence space~$CS$}. Such a diagram is a starting point for the Penrose transform.

Next we need to f\/ix an $\mathfrak{r}$-dominant and integral weight $\lambda\in\mathfrak{h}^\ast$. Then there is a relative BGG sequence $\blacktriangle^\ast(\lambda)$ which is an exact sequence of holomorphic sections of associated vector bundles over $CS$ and linear $\mathrm{G}$-invariant dif\/ferential operators such that $\eta^{-1}\mathcal{O}_\mathfrak{r}(\lambda)$ is the kernel sheaf of the f\/irst operator in the sequence. In other words, there is a long exact sequence of sheaves
\begin{gather*}
0\rightarrow\eta^{-1}\mathcal{O}_\mathfrak{r}(\lambda)\rightarrow\blacktriangle^\ast(\lambda).
\end{gather*}
The upshot of this is that although the pullback sheaf $\eta^{-1}\mathcal{O}_\mathfrak{r}(\lambda)$ is not a sheaf of holomorphic sections of an associated vector bundle over $CS$, it is naturally a sub-sheaf of $\mathcal{O}_\mathfrak{q}(\lambda)$ which is cut out by an invariant dif\/ferential equation. Moreover, the graph of the relative BGG sequence is \cite[Section~8.7]{BE} completely determined by the $W^\mathfrak{q}_\mathfrak{r}$-orbit of~$\lambda$.

Then we push down the relative BGG sequence by the direct image functor $\tau_\ast$. Computing higher direct images of sheaves in the relative BGG sequence is completely algorithmic and algebraic (see \cite[Section~5.3]{BE}). On the other hand, there is no general algorithm which computes direct images of dif\/ferential operators and it seems that this has to be treated in each case separately. Nevertheless, in this way one obtains a complex of operators on~$\mathrm{G}/\mathrm{P}$.

\section{Lie theory}\label{section LT}

In Section \ref{section LT} we will provide an algebraic background which is needed in the construction of the $k$-Dirac complexes via the Penrose transform. We will work with complex parabolic geometries which are associated to gradings on the simple Lie algebra $\mathfrak{g}=\mathfrak{so}(2m,\mathbb{C})$. Section \ref{section LT} is organized as follows: in Section \ref{section lie algebra} we will set notation and study the gradings on $\mathfrak{g}$. In Section \ref{section the relative Weyl group} we will compute the relative Hasse diagram $W^\mathfrak{q}_\mathfrak{r}$.

\subsection{Lie algebra $\mathfrak{g}$ and parabolic subalgebras}\label{section lie algebra}
Let
$\label{standard basis}
\{e_1,\dots,e_m,e^\ast_{k+1},\dots,e^\ast_m,e_1^\ast,\dots,e_k^\ast\}$
be the standard basis of $\mathbb{C}^{2m}$, $\delta$ be the Kronecker delta and $h$ be the complex bilinear form that satisf\/ies $h(e_i,e^\ast_j)=\delta_{ij}$, $h(e_i,e_j)=h(e^\ast_i,e^\ast_j)=0$ for all $i,j=1,\dots,m$.
A matrix belongs to the associated Lie algebra $\mathfrak{g}:=\mathfrak{so}(h)\cong\mathfrak{so}(2m,\mathbb{C})$ if and only if it is of the form
\begin{gather}\label{lie algebra g}
\left(
\begin{matrix}
A&Z_1&Z_2&W\\
X_1&B&D&-Z_2^T\\
X_2&C&-B^T&-Z_1^T\\
Y&-X_2^T&-X_1^T&-A^T
\end{matrix}
\right),
\end{gather}
 where $A\in M(k,\mathbb{C})$, $B\in M(n,\mathbb{C})$, $C,D\in A(n,\mathbb{C})$, $X_1,X_2,Z_1^T,Z_2^T\in M(n,k,\mathbb{C})$,
$W,Y\in A(k,\mathbb{C})$.

The subspace of diagonal matrices $\mathfrak{h}$ is a Cartan subalgebra of $\mathfrak{g}$. We denote by $\epsilon_i$ the linear form on $\mathfrak{h}$ def\/ined by $\varepsilon_i\colon H=(h_{kl})\mapsto h_{ii}$. Then $\{\epsilon_1,\dots,\epsilon_m\}$ is a basis of $\mathfrak{h}^\ast$ and $\triangle=\{\pm\epsilon_i\pm\epsilon_j\colon  i,j=1,\dots,m\}$. If we choose $\triangle^+=\{\epsilon_i\pm\epsilon_j\colon 1\le i<j\le m\}$
as positive roots, then the simple roots are $\alpha_i:=\epsilon_i-\epsilon_{i+1}$, $i=1,\dots,m-1$ and $\alpha_m:=\epsilon_{m-1}+\epsilon_m$. The associated fundamental weights are $\omega_i=\epsilon_1+\dots+\epsilon_i$, $i=1,\dots,m-2$, $ \omega_{m-1}=\frac{1}{2}(\epsilon_1+\dots+\epsilon_{m-1}-\epsilon_m)$ and $\omega_m=\frac{1}{2}(\epsilon_1+\dots+\epsilon_m)$. The \textit{lowest form} $\rho$ is equal
to $\frac{1}{2}\sum\limits_{\alpha\in\triangle^+}\alpha=\omega_1+\dots+\omega_m=(m-1,\dots,1,0)$.
If $\lambda=\sum\limits_{i=1}^m\lambda_i\epsilon_i$ where $\lambda_i\in\mathbb{C}$, then we will also write $\lambda=(\lambda_1,\dots,\lambda_m)$.
The simple ref\/lection $s_i\in W_\mathfrak{g}$ associated to $\alpha_i$ acts on $\mathfrak{h}^\ast$ by
\begin{gather}\label{simple reflections}
s_i(\lambda)=(\lambda_1,\dots,\lambda_{i-1},\lambda_{i+1},\lambda_{i},\lambda_{i+2},\dots,\lambda_m),\qquad i=1,\dots,m-1
\end{gather}
and
\begin{gather*}
 s_m(\lambda)=(\lambda_1,\dots,\lambda_{m-2},-\lambda_m,-\lambda_{m-1}).
\end{gather*}

We will be interested in the double f\/ibration diagram
\begin{gather}\label{double fibration diagram I}
\begin{matrix}
{\dynkin\root{}\link\dots\link\noroot{}\link\dots\link\root{}\norootupright{}
\rootdownright{}\enddynkin}\\
\swarrow\ \ \ \ \ \ \ \ \ \ \ \ \ \ \ \ \ \ \ \ \ \ \ \searrow\\
\\
{\dynkin \root{}\link\dots\link\root{}\norootupright{}\rootdownright{}\enddynkin}\ \ \ \ \ \ \ \ \ \
{\dynkin \root{}\link\dots\link\noroot{}\link\dots\link\root{}\rootupright{}\rootdownright{}\enddynkin}
\end{matrix}
\end{gather}
where, going from left to right, the sets of simple roots are $\{\alpha_m\}$, $\{\alpha_k,\alpha_m\}$ and $\{\alpha_k\}$ and the associated gradings are
\begin{gather*}
\mathfrak{r}_{-1}\oplus\mathfrak{r}_0\oplus\mathfrak{r}_1, \qquad \mathfrak{q}_{-3}\oplus\mathfrak{q}_{-2}\oplus\dots\oplus\mathfrak{q}_3\qquad \mathrm{and}\qquad
 \mathfrak{g}_{-2}\oplus\dots\oplus\mathfrak{g}_2,
\end{gather*}
respectively.
With respect to the block decomposition from (\ref{lie algebra g}), we have\footnote{Here we mean $\mathfrak{q}_0$ is the subspace of block diagonal matrices, $\mathfrak{q}_1$ is the subspace of those block matrices where only the matrices $Z_1$, $D$ are non-zero, etc.}
\begin{gather*}
\hspace*{17mm}\left(
\begin{matrix}
\mathfrak{q}_0&\mathfrak{q}_1&\mathfrak{q}_2&\mathfrak{q}_3\\
\mathfrak{q}_{-1}&\mathfrak{q}_0&\mathfrak{q}_1&\mathfrak{q}_2\\
\mathfrak{q}_{-2}&\mathfrak{q}_{-1}&\mathfrak{q}_0&\mathfrak{q}_1\\
\mathfrak{q}_{-3}&\mathfrak{q}_{-2}&\mathfrak{q}_{-1}&\mathfrak{q}_0
\end{matrix}
\right) \\
\hspace*{25mm} \swarrow\ \ \ \ \ \ \ \ \ \ \ \ \ \searrow \nonumber\\
 \left(
\begin{matrix}
\mathfrak{r}_0&\mathfrak{r}_0&\mathfrak{r}_1&\mathfrak{r}_1\\
\mathfrak{r}_0&\mathfrak{r}_0&\mathfrak{r}_1&\mathfrak{r}_1\\
\mathfrak{r}_{-1}&\mathfrak{r}_{-1}&\mathfrak{r}_0&\mathfrak{r}_0\\
\mathfrak{r}_{-1}&\mathfrak{r}_{-1}&\mathfrak{r}_0&\mathfrak{r}_0\\
\end{matrix}
\right)\ \ \
\left(
\begin{matrix}
\mathfrak{g}_0&\mathfrak{g}_1&\mathfrak{g}_1&\mathfrak{g}_2\\
\mathfrak{g}_{-1}&\mathfrak{g}_0&\mathfrak{g}_0&\mathfrak{g}_1\\
\mathfrak{g}_{-1}&\mathfrak{g}_0&\mathfrak{g}_0&\mathfrak{g}_1\\
\mathfrak{g}_{-2}&\mathfrak{g}_{-1}&\mathfrak{g}_{-1}&\mathfrak{g}_0\\
\end{matrix}
\right).
\end{gather*}
The associated standard parabolic subalgebras are
\begin{gather*}
 \mathfrak{r}=\mathfrak{r}_0\oplus\mathfrak{r}_1, \qquad \mathfrak{q}=\mathfrak{q}_0\oplus\mathfrak{q}_1\oplus\mathfrak{q}_2\oplus\mathfrak{q}_3\qquad \mathrm{and} \qquad \mathfrak{p}=\mathfrak{g}_0\oplus\mathfrak{g}_1\oplus\mathfrak{g}_2,
\end{gather*}
respectively, and we have the following isomorphisms
\begin{gather*}
 \mathfrak{r}_0\cong M(m,\mathbb{C}), \qquad \mathfrak{q}_0\cong M(k,\mathbb{C})\oplus M(n,\mathbb{C})\qquad \mathrm{and} \qquad \mathfrak{g}_0\cong M(k,\mathbb{C})\oplus\mathfrak{so}(2n,\mathbb{C}).
\end{gather*}

We for brevity put
\begin{gather}
 \mathbb{C}^k:=[e_1,\dots,e_k], \qquad \mathbb{C}^{k\ast}:=[e_1^\ast,\dots,e_k^\ast],\nonumber\\
 \mathbb{C}^n:=[e_{k+1},\dots,e_m],\qquad \mathbb{C}^{n\ast}:=[e_{k+1}^\ast,\dots,e_m^\ast],\nonumber\\
  \mathbb{C}^{2n}:=\mathbb{C}^n\oplus\mathbb{C}^{n\ast}\qquad \mathrm{and}\qquad \mathbb{C}^m:=\mathbb{C}^k\oplus\mathbb{C}^n.\label{subspace of C2m}
\end{gather}
Notice that the bilinear form $h$ induces dualities between $\mathbb{C}^k$ and $\mathbb{C}^{k\ast}$ and between $\mathbb{C}^n$ and $\mathbb{C}^{n\ast}$ which justif\/ies the notation, that $\mathbb{C}^m$ is a maximal, totally isotropic and $\mathfrak{r}_0$-invariant subspace, that $\mathbb{C}^k$, $\mathbb{C}^{k\ast}$, $\mathbb{C}^n$ and $\mathbb{C}^{n\ast}$ are $\mathfrak{q}_0$-invariant, that $\mathbb{C}^k$, $\mathbb{C}^{2n}$ and $\mathbb{C}^{k\ast}$ are $\mathfrak{g}_0$-invariant and f\/inally, that $h|_{\mathbb{C}^{2n}}$ is a non-degenerate, symmetric and $\mathfrak{g}_0$-invariant bilinear form. We will for brevity write only $h$ instead of $h|_{\mathbb{C}^{2n}}$ as it will be always clear from the context what is meant.

Let us now consider the associated nilpotent subalgebras
\begin{gather*}
 \mathfrak{r}_-=\mathfrak{r}_{-1},\qquad \mathfrak{q}_-=\mathfrak{q}_{-3}\oplus\mathfrak{q}_{-2}\oplus\mathfrak{q}_{-1}\qquad \mathrm{and} \qquad \mathfrak{g}_-=\mathfrak{g}_{-2}\oplus\mathfrak{g}_{-1}.
\end{gather*}
By the Jacobi identity, the Lie bracket is equivariant with respect to the adjoint action of the corresponding Levi factor
and by the grading property following equation~(\ref{k-gradation}), it is homogeneous of degree zero. Hence, we can consider the Lie bracket in each homogeneity separately.

The f\/irst algebra $\mathfrak{r}_-$ is abelian and so there is nothing to add.

On the other hand, $\mathfrak{q}_{-}$ is 3-graded and, as $\mathfrak{q}_0$-modules, we have $\mathfrak{q}_{-1}\cong\mathbb{E}\oplus\mathbb{F}$, $ \mathfrak{q}_{-2}\cong\mathbb{C}^{k\ast}\otimes\mathbb{C}^{n\ast}$, $\mathfrak{q}_{-3}\cong\Lambda^2\mathbb{C}^{k\ast}$ where we put $\mathbb{E}:=\mathbb{C}^{k\ast}\otimes\mathbb{C}^n$ and $\mathbb{F}:=\Lambda^2\mathbb{C}^{n\ast}$. Using these isomorphisms, the Lie brackets in homogeneity~$-2$ and~$-3$ are the compositions of the canonical projections
\begin{gather}\label{lie bracket in homogeneity -2}
\Lambda^2\mathfrak{q}_{-1}\rightarrow\mathbb{E}\otimes\mathbb{F}=\big(\mathbb{C}^{k\ast}\otimes\mathbb{C}^n\big) \otimes\Lambda^2\mathbb{C}^{n\ast}\rightarrow\mathbb{C}^{k\ast}\otimes\mathbb{C}^{n\ast}=\mathfrak{q}_{-2}
\end{gather}
and
\begin{gather*}
 \mathfrak{q}_{-1}\otimes\mathfrak{q}_{-2}\rightarrow\mathbb{E}\otimes\mathfrak{q}_{-2}= \big(\mathbb{C}^{k\ast}\otimes\mathbb{C}^n\big)\otimes\big(\mathbb{C}^{k\ast}\otimes\mathbb{C}^{n\ast}\big)\rightarrow\Lambda^2\mathbb{C}^{k\ast} =\mathfrak{q}_{-3},
\end{gather*}
respectively. Here we use the canonical pairing $\mathbb{C}^n\otimes\mathbb{C}^{n\ast}\rightarrow\mathbb{C}$. Notice that $\Lambda^2\mathbb{E}\oplus\Lambda^2\mathbb{F}$ is contained in the kernel of~(\ref{lie bracket in homogeneity -2}).

In order to understand the Lie bracket on $\mathfrak{g}_-$, f\/irst notice that there are isomorphisms $\mathfrak{g}_{-1}\cong \mathbb{C}^{k\ast}\otimes\mathbb{C}^{2n}$ and $ \mathfrak{g}_{-2}\cong\Lambda^2\mathbb{C}^{k\ast}\otimes\mathbb{C}$ of irreducible $\mathfrak{g}_0$-modules where $\mathbb{C}$ is the trivial representation of $\mathfrak{so}(2n,\mathbb{C})$. As $\mathfrak{g}_-$ is 2-graded, the Lie bracket is non-zero only in homogeneity~$-2$. It is given by
\begin{gather*}
\Lambda^2\mathfrak{g}_{-1}=\Lambda^2\big(\mathbb{C}^{k\ast}\otimes\mathbb{C}^{2n}\big)\rightarrow\Lambda^2\mathbb{C}^{k\ast}\otimes S^2\mathbb{C}^{2n}\rightarrow\Lambda^2\mathbb{C}^{k\ast}\otimes\mathbb{C}=\mathfrak{g}_{-2},
\end{gather*}
where in the last map we take the trace with respect to $h$.

In the table below we specify when $\lambda=(\lambda_1,\dots,\lambda_m)\in\mathfrak{h}^\ast$ is dominant for each parabolic subalgebra $\mathfrak{p},\mathfrak{q}$ and $\mathfrak{r}$. We put $\mathbb{N}_0:=\mathbb{N}\cup\{0\}$.

\begin{table}[h!]\centering
 \caption{Dominant weights.}\label{table dominant weights}
 \begin{tabular}{|c|c|}
\hline algebra&dominant and integral weights \\
\hline$\mathfrak{p}$&$\lambda_i-\lambda_{i+1}\in\mathbb{N}_0,\ i\ne k,\ 2\lambda_m\in\mathbb{Z},\ \lambda_{m-1}\ge|\lambda_m|$\\
\hline$\mathfrak{r}$&$\lambda_i-\lambda_{i+1}\in\mathbb{N}_0$\\
\hline$\mathfrak{q}$&$\lambda_i-\lambda_{i+1}\in\mathbb{N}_0,i\ne k$\\
\hline
\end{tabular}
 \end{table}

\subsection[Relative Hasse diagram $W_\mathfrak{r}^\mathfrak{q}$]{Relative Hasse diagram $\boldsymbol{W_\mathfrak{r}^\mathfrak{q}}$}\label{section the relative Weyl group}
Let us f\/irst set notation. By a \textit{partition} we will mean an element of $\mathbb{N}^{k,n}_{++}:=\{(a_1,\dots,a_k)\colon a_i\in\mathbb{Z}$, $n\ge a_1\ge a_2\ge\dots\ge a_k\ge0\}$. For two partitions $a=(a_1,\dots,a_k)$ and $a'=(a_1',\dots,a'_k)$ we write $a\le a'$ if $a_i\le a'_i$ for all $i=1,\dots,k$ and $a<a'$ if $a\le a'$ and $a\ne a'$. If $a<a'$ does not hold, then we write $a\nless a'$. We put
\begin{gather}\label{abbreviations}
 |a|=a_1+\dots+a_k,\qquad d(a):=\max\{i\colon a_i\ge i\},\\
 q(a)=\sum_{i=1}^{k}\max\{ a_i-i,0\}\qquad \mathrm{and}\qquad r(a):=d(a)+q(a).\nonumber	
\end{gather}

To the partition $a$ we associate the \textit{Young diagram} (or the \textit{Ferrers diagram}) $\mathrm{Y}$ consisting of~$k$ left-justif\/ied rows with $a_i$-boxes in the $i$-th row. Let $b_i$ be the number of boxes in the $i$-th column of $\mathrm{Y}$. Then we call $b=(b_1,\dots,b_n)\in\mathbb{N}^{n,k}_{++}$ the \textit{partition conjugated to} $a$ and we say that $a$ is \textit{symmetric} if $a_i=b_i$, $i=1,\dots,k$ and $b_{k+1}=\dots=b_n=0$. As we assume $n\ge k$, the set of symmetric partitions in $\mathbb{N}^{k,n}_{++}$ depends only on $k$, and thus, we denote it for simplicity by $S^k$ and put $S^k_j:=\{a\in S^k\colon r(a)=j\}$.

\begin{Example}\label{ex YDI}\quad
\begin{enumerate}\itemsep=0pt
\item[(1)] The empty partition is by def\/inition always symmetric.

\item[(2)] The Young diagram of $a=(4,3,1,0,0)\in\mathbb{N}^{5,6}_{++}$ is
\begin{gather}\label{YD}
\Yctj
\end{gather}
and we f\/ind that $d(a)=2$, $q(a)=4$ and $r(a)=6$. The conjugated partition is $b=(3,2,2,1,0,0)$ with $d(b)=2$, $q(b)=2$ and $r(b)=4$. The Young diagram of $b$ is
\begin{gather*}
\Ytddj.
\end{gather*}
We see that the partition is not symmetric.
\end{enumerate}
\end{Example}

Notice that $d(a)$ and $q(a)$ are equal to the number of boxes in the associated Young diagram that are on and above the main diagonal, respectively and that a partition is symmetric if and only if its Young diagram is symmetric with respect to the ref\/lection along the main diagonal.

We can now continue by investigating the relative Hasse graph $W_\mathfrak{r}^\mathfrak{q}$.
The group $W_\mathfrak{r}$ is generated by $s_1,\dots,s_{m-1}$ while $W_\mathfrak{q}$ is generated by elements $s_1,\dots,s_{k-1},s_{k+1},\dots,s_{m-1}$. By (\ref{simple reflections}), it follows that $W_\mathfrak{r}$ is the permutation group $S_m$ on $\{1,\dots,m\}$ and that $W_\mathfrak{q}\cong S_k\times S_n$ is the stabilizer of $\{1,\dots,k\}$.
Recall from Section~\ref{section review} that in each left coset of $W_\mathfrak{q}$ in $W_\mathfrak{r}$ there is a unique element of minimal length and that we denote the set of all such distinguished representatives by $W_\mathfrak{r}^\mathfrak{q}$. Moreover, the Bruhat order on $W_\mathfrak{g}$ descends to a partial order on $W_\mathfrak{r}$ and on $W_\mathfrak{q}^\mathfrak{r}$. We will now show that there is an isomorphism $\mathbb{N}^{k,n}_{++}\rightarrow W^\mathfrak{q}_\mathfrak{r}$ of partially ordered sets.

Let $a=(a_1,\dots,a_k)\in\mathbb{N}^{k,n}_{++}$ and $\mathrm{Y}$ be the associated Young diagram. We will call the box in the $i$-th row and the $j$-th column of $Y$ an $(i,j)$\textit{-box} and we write into this box the number $\sharp(i,j):=k-i+j$. Notice that $1\le\sharp(i,j)\le m$. Then the set of boxes in $Y$ is indexed by $\Xi_a:=\{(i,j)\colon i=1,\dots,k,\ j=1,\dots,a_i\}$ and we order this set lexicographically, i.e., $(i,j)<(i',j')$ if $i<i'$ or $i=i'$ and $j<j'$. Then
\begin{gather}\label{definition of permutation}
w_a:=s_{\sharp(\Psi(1))}s_{\sharp(\Psi(2))}\dots s_{\sharp(\Psi(|a|))}\in S_m,
\end{gather}
where $\Psi\colon \{1,2,\dots,|a|\}\rightarrow\Xi_a$ is the unique isomorphism of ordered sets.
 Let us now look at an example.

\begin{Example}
The Young diagram from (\ref{YD}) is f\/illed as
\begin{gather*}
\begin{matrix}
 k&k+1&k+2&k+3\\
k-1&k&k+1&\\
k-2&&&\\
\end{matrix}
\end{gather*}
and so $w_a:=s_ks_{k+1}s_{k+2}s_{k+3}s_{k-1}s_ks_{k+1}s_{k-2}$.
\end{Example}

We have the following preliminary observation.

\begin{Lemma}
 Let $a=(a_1,\dots,a_k)$ be as above and $b=(b_1,\dots,b_n)$ be the conjugated partition. Then the permutation $w_a\in S_m$ from~\eqref{definition of permutation} satisfies
 \begin{gather}\label{permutation from YD}
w_a(k-i+1+a_i)=k-i+1\qquad \mathrm{and}\qquad w_a(k+j-b_j)=k+j
\end{gather}
for each $i=1,\dots,k$ and $j=1,\dots,n$.
\end{Lemma}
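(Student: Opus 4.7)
I would argue by induction on $|a|$, the number of boxes of the Young diagram. The base case $|a|=0$ is immediate: $w_a$ is the empty product, all $a_i$ and $b_j$ vanish, and both equations become tautologies.

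For the inductive step, let $i_0$ be the largest index with $a_{i_0}>0$ and let $a'$ be the partition obtained from $a$ by deleting the final box $(i_0,a_{i_0})$ in lexicographic order; then $a'$ is still a valid partition with $|a'|=|a|-1$, and definition~(\ref{definition of permutation}) gives $w_a = w_{a'}\,s_\ell$, where $\ell := k-i_0+a_{i_0} = \sharp(i_0,a_{i_0})$. Under conjugation, $b$ changes in exactly one entry: since the removed box is the bottom of column $a_{i_0}$, we have $b'_{a_{i_0}} = b_{a_{i_0}}-1 = i_0-1$, while $b'_j=b_j$ for $j\ne a_{i_0}$. Writing $\tau=(\ell,\ell+1)$ for the transposition corresponding to $s_\ell$ via~(\ref{simple reflections}), we obtain $w_a(x)=w_{a'}(\tau(x))$.

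The first identity is then checked case by case. For $i=i_0$, $\tau$ sends $k-i_0+1+a_{i_0}=\ell+1$ to $\ell = k-i_0+1+a'_{i_0}$, and the induction hypothesis applied to $a'$ finishes this case. For $i\ne i_0$, one verifies that $\tau$ fixes $k-i+1+a_i$: when $i<i_0$, the strict inequality $a_i-i > a_{i_0}-i_0$ (from monotonicity of $a$ and $i<i_0$) places $k-i+1+a_i$ strictly above $\ell+1$; when $i>i_0$, $a_i=0$ places it strictly below $\ell$. Since $a'_i=a_i$ in both subcases, induction concludes. The second identity is proved by a symmetric analysis: for $j=a_{i_0}$, $k+j-b_j=\ell$ is sent by $\tau$ to $\ell+1=k+a_{i_0}-b'_{a_{i_0}}$ (this is exactly where the updated conjugate entry is used), and the induction hypothesis supplies the value $k+a_{i_0}$.

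The main technical point, and the one requiring most care, is verifying that $\tau$ fixes $k+j-b_j$ for $j\ne a_{i_0}$. This amounts to showing $j-b_j\notin\{a_{i_0}-i_0,\,a_{i_0}-i_0+1\}$. Strict monotonicity of $j\mapsto j-b_j$ (a consequence of $b$ being non-increasing) rules out the first value, using only $a_{i_0}-b_{a_{i_0}}=a_{i_0}-i_0$. For the second value, one uses that column $a_{i_0}+1$ of the Young diagram of $a$ has at most $i_0-1$ boxes (since rows $i\ge i_0+1$ are empty and row $i_0$ ends at column $a_{i_0}$), giving $b_{a_{i_0}+1}\le i_0-1$ and hence $(a_{i_0}+1)-b_{a_{i_0}+1}\ge a_{i_0}-i_0+2$; combined with strict monotonicity this closes the loop. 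Apart from this combinatorial check, every step is routine bookkeeping after invoking the induction hypothesis.
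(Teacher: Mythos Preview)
Your proof is correct and proceeds by a genuinely different route from the paper. The paper's proof is essentially a direct verification: it sets up the notation $r_i=k-i+1$, $r^i=k-i+a_i$, $c_j=k+j-1$, $c^j=k+j-b_j$ and then simply asserts that ``it is easy to check'' that $w_a(r^i+1)=r_i$ and $w_a(c^j)=c_j+1$; implicitly this would be done by noting that the block of reflections coming from row $i$ acts as a cycle and composing these cycles. Your argument instead inducts on $|a|$, peeling off the last simple reflection $s_\ell$ and tracking its effect explicitly. What your approach buys is a fully written, elementary verification in which the only nontrivial step is isolated cleanly: showing that $s_\ell$ fixes $k+j-b_j$ for $j\ne a_{i_0}$ via the strict monotonicity of $j\mapsto j-b_j$ together with the gap $(a_{i_0}+1)-b_{a_{i_0}+1}\ge a_{i_0}-i_0+2$. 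One very minor omission: you do not explicitly address the edge case $a_{i_0}=n$, where column $a_{i_0}+1$ does not exist; but there every $j\ne a_{i_0}$ satisfies $j<a_{i_0}$, and your strict monotonicity argument already rules out both forbidden values $a_{i_0}-i_0$ and $a_{i_0}-i_0+1$ in that range.
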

\begin{proof}
Fix $i=1,\dots,k$.
If $a_i>0$, there is $r_i:=k-i+1$ written in the $(i,1)$-box and $r^i:=k-i+a_i=r_i+a_i-1$ in the $(i,a_i)$-box. We put $r^i-1=r_i:=k-i+1$ if $a_i=0$. Similarly, if $j=1,\dots,n$ and $b_j>0$, then there is $c_j:=k+j-1$ in the $(1,j)$-box and $c^j:=k+j-b_j=c_j-b_j+1$ in the $(b_j,j)$-box. We put $c^j-1=c_j:=k+j-1$ if $b_j=0$. Then it is easy to check that $w_a(r^i+1)=r_i$ and $w_a(c^j)=c_j+1$ which completes the proof.
\end{proof}

Notice that the sets $\{k-i+1+a_i\colon i=1,\dots,k\}$ and $\{ k+j-b_j\colon j=1,\dots,n\}$ are disjoint and that their union is $\{1,2,\dots,m\}$. By~(\ref{permutation from YD}), it follows that
\begin{gather}\label{action on lowest form}
w_a\rho=\rho+(-a_k,\dots,-a_1\,|\,b_1,\dots,b_n),
\end{gather}
where $\rho=(m-1,\dots,1,0)$ is the lowest form of $\mathfrak{g}$ and for clarity, we separate the f\/irst $k$ and last $n$ coef\/f\/icients by~$|$. Comparing this with Table~\ref{table dominant weights}, we see that $w_a\rho$ is $\mathfrak{q}$-dominant. As the same holds for any $\mathfrak{r}$-dominant weight, it follows that $w_a\in W^\mathfrak{q}_\mathfrak{r}$.

\begin{Lemma}\label{lemma relative Weyl group}
The map $\mathbb{N}^{k,n}_{++}\rightarrow W^\mathfrak{q}_\mathfrak{r}$, $a\mapsto w_a$ is an isomorphism of partially ordered sets.
\end{Lemma}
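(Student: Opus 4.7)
The plan is to exploit the standard identification of right cosets in $W_{\mathfrak{q}}\backslash W_{\mathfrak{r}}$ with $k$-element subsets of $\{1,\dots,m\}$ and to read the lemma off this bijection. Since $W_{\mathfrak{r}}=S_m$ and $W_{\mathfrak{q}}=S_k\times S_n$ is the stabilizer of $\{1,\dots,k\}$, every right coset $W_{\mathfrak{q}}w$ is determined by the set $w^{-1}(\{1,\dots,k\})$, and its unique minimal-length representative is the Grassmannian permutation characterized by $w^{-1}(1)<\dots<w^{-1}(k)$ and $w^{-1}(k+1)<\dots<w^{-1}(m)$. This provides a canonical bijection $W^{\mathfrak{q}}_{\mathfrak{r}}\to\binom{\{1,\dots,m\}}{k}$, $w\mapsto w^{-1}(\{1,\dots,k\})$.

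Next, for $a\in\mathbb{N}^{k,n}_{++}$ with conjugate partition $b$, I would apply (\ref{permutation from YD}) to read off the values $w_a^{-1}(\ell)=\ell+a_{k-\ell+1}$ for $\ell=1,\dots,k$ and $w_a^{-1}(k+j)=k+j-b_j$ for $j=1,\dots,n$. Each of these two sequences is strictly increasing in its parameter, since the successive differences equal $1+a_{k-\ell}-a_{k-\ell+1}\ge 1$ and $1+b_j-b_{j+1}\ge 1$ respectively. Hence $w_a$ is indeed the minimal-length representative in its coset, and it corresponds to the subset $I_a:=\{k+1-i+a_i\colon i=1,\dots,k\}$. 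The map $a\mapsto I_a$ is itself a bijection $\mathbb{N}^{k,n}_{++}\to\binom{\{1,\dots,m\}}{k}$: the entries $k+1-i+a_i$ are strictly decreasing in $i$ and lie in $\{1,\dots,m\}$, and the inverse is obtained by sorting a given $I$ in decreasing order and subtracting $k+1-i$ from the $i$-th term. Composition gives that $a\mapsto w_a$ is a bijection, with no need for length computations.

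For the order-preserving property I would invoke the classical tableau criterion for Bruhat order on Grassmannian permutations: if $I,I'\in\binom{\{1,\dots,m\}}{k}$ are written in increasing order as $i_1<\dots<i_k$ and $i'_1<\dots<i'_k$, then the associated Grassmannian permutations satisfy $w\le w'$ in the Bruhat order of $S_m$ if and only if $i_\ell\le i'_\ell$ for all $\ell$. Sorting $I_a$ in increasing order gives $i_\ell=\ell+a_{k-\ell+1}$, so this componentwise comparison is equivalent to $a_{k-\ell+1}\le a'_{k-\ell+1}$ for all $\ell$, which is precisely $a\le a'$ in $\mathbb{N}^{k,n}_{++}$. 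Since the Bruhat order on $W^{\mathfrak{q}}_{\mathfrak{r}}$ is by definition the restriction of the Bruhat order on $W_{\mathfrak{g}}$, this finishes the order-preserving statement in both directions.

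The main obstacle is appealing correctly to the tableau criterion for Bruhat order on Grassmannian permutations; the remaining steps are essentially a translation of (\ref{permutation from YD}) into the subset model and are routine.
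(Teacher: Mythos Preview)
Your argument is correct and takes a genuinely different route from the paper. The paper verifies injectivity via the action on $\rho$, proves surjectivity by reconstructing $a$ from the increasing sequence $w^{-1}(1)<\dots<w^{-1}(k)$, and then handles the order by working one covering relation at a time: it checks that a single-box addition $a<a'$ with $|a'|=|a|+1$ yields an arrow $w_a\to w_{a'}$ using the positive-root criterion of \cite[Proposition~3.2.16]{CS}, and conversely rules out arrows when $a'\nless a''$ via the inversion-set criterion of \cite[Proposition~3.2.17]{CS}. Your approach bypasses all of this by recognising the situation as the standard subset model for Grassmannian permutations in $S_m$ and invoking the tableau criterion for Bruhat order; this gives the full order isomorphism in one stroke rather than edge by edge, and avoids any reference to roots or to \cite{CS}.

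Two small points deserve a sentence each in a final write-up. First, the tableau criterion is stated for the Bruhat order of $S_m$ as a Coxeter group in its own right, whereas the paper's partial order on $W^{\mathfrak q}_{\mathfrak r}$ is the restriction of the Bruhat order on $W_{\mathfrak g}$; you should note explicitly that these agree because $W_{\mathfrak r}$ is a standard parabolic subgroup of $W_{\mathfrak g}$, so reduced expressions and hence Bruhat order transfer. Second, the tableau criterion for Grassmannian permutations, while classical, is exactly the substantive input here and should carry a reference (e.g., Bj\"orner--Brenti or Fulton). With those caveats, your proof is cleaner and more conceptual; the paper's argument has the advantage of staying entirely inside the parabolic-geometry toolkit already set up, without importing an external combinatorial lemma.
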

\begin{proof}
The map $a\mapsto w_a$ is by (\ref{action on lowest form}) clearly injective. To show surjectivity, f\/ix
$w\in W^\mathfrak{q}_\mathfrak{r}$. Then the sequence $c_1,\dots,c_k$ where $c_i=w^{-1}(i)$, $i=1,\dots,k$ is increasing. By \cite[Proposition~3.2.16]{CS}, the map $w\in W^\mathfrak{q}_\mathfrak{r}\mapsto w^{-1}\omega_k$ is injective. It follows that $w^{-1}\omega_k$ is uniquely determined by the sequence $c_1,\dots,c_k$. Then $a:=(a_1,\dots,a_k)\in\mathbb{N}^{k,n}_{++}$ where $a_i:=c_{k-i+1}+i-k-1$, $i=1,\dots,k$ and from~(\ref{permutation from YD}), it follows that $w_a^{-1}(k-i+1)=k-i+1+a_i=c_{k-i+1}$, $i=1,\dots,k$. This shows that $w_a^{-1}\omega_k=w^{-1}\omega_k$ and thus, $w=w_a$. Now it remains to show that the map is compatible with the orders.

Assume that $a=(a_1,\dots,a_k)$, $a'=(a'_1,\dots,a'_k)\in\mathbb{N}^{k,n}_{++}$ satisfy $|a'|=|a|+1$ and $a<a'$. Then there is a unique integer $i\le k$ such that $a'_i=a_i+1$ and so $w_{a'}=w_as_{k-i+a'_i}$. By~(\ref{permutation from YD}), we have that $w_a\alpha_{k-i+a'_i}>0$ and thus by \cite[Proposition~3.2.16]{CS}, there is an arrow $w_a\rightarrow w_{a'}$ in $W^\mathfrak{q}_\mathfrak{r}$.

On the other hand, suppose that $a''=(a_1'',\dots,a_k'')\in\mathbb{N}^{k,n}_{++}$ satisf\/ies $a'\not <a''$. In order to complete the proof, it is enough to show that there is no arrow $w_{a'}\rightarrow w_{a''}$. By assumptions, there is $j$ such that $a'_1\le a_1'',\dots,a_{j-1}'\le a''_{j-1}$ and $a'_j>a_j''$. Without loss of generality we may assume that $i=j$. Then $w_{a'}^{-1}(w_{a}\alpha_{k+a_i-i})=s_{k+a_i-i}\alpha_{k+a_i-i}<0$. On the other hand by~(\ref{permutation from YD}), it follows that
$w_{a''}^{-1}(w_a\alpha_{k+a_i-i})>0$. We proved that $\Phi_{w_a'}\not\subset\Phi_{w_{a''}}$ and thus by \cite[Proposition~3.2.17]{CS}, there cannot be any arrow $w_{a'}\rightarrow w_{a''}$.
\end{proof}

We will later need the following two observations. A permutation $w\in S_m$ is $k$-\textit{balanced}, if the following is true: if $w(k-i)>k$ for some $i=0,\dots,k-1$, then $ w(k+i+1)\le k$.

\begin{Lemma}\label{lemma symmetric YD}
The permutation $w_a$ associated to $a\in\mathbb{N}^{k,n}_{++}$ is $k$-balanced if and only if $a\in S^k$.
\end{Lemma}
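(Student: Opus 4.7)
The plan is to translate both the $k$-balanced condition on $w_a$ and the symmetry condition on $a$ into statements about the set of ``contents'' $C := \{a_j - j\colon j = 1, \ldots, k\}$, and observe that the two translations coincide. The starting point is~(\ref{permutation from YD}), which gives $w_a(\ell) \leq k$ iff $\ell \in L := \{k + 1 + c\colon c \in C\}$, and $w_a(\ell) > k$ iff $\ell \in R := \{k - d\colon d \in D\}$, where $D := \{b_j - j\colon j = 1, \ldots, n\}$. Since $L \sqcup R = \{1, \ldots, m\}$, shifting by $-(k+1)$ yields the basic combinatorial identity $C \sqcup (-D - 1) = \{-k, \ldots, n - 1\}$, where $-D - 1 := \{-d - 1\colon d \in D\}$.

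Unpacking the $k$-balanced condition via these formulas, $k - i \in R$ iff $i \in D$, and $k + i + 1 \in R$ iff $-i - 1 \in D$; combining with the identity above gives $\{i, -i - 1\} \subseteq D$ iff $\{i, -i - 1\} \cap C = \emptyset$. Therefore $w_a$ is $k$-balanced iff $C$ meets every pair $\{i, -i - 1\}$ with $i \in \{0, 1, \ldots, k - 1\}$. These $k$ pairs are precisely the orbits of the fixed-point-free involution $\tau(x) := -x - 1$ on $\{-k, \ldots, k - 1\}$.

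From here both directions are short. If $a_1 > k$, then $a_1 - 1 \in C$ falls outside $\{-k, \ldots, k - 1\}$, so $|C \cap \{-k, \ldots, k - 1\}| \leq k - 1$ cannot cover all $k$ disjoint pairs, and simultaneously $b_{k+1} > 0$ forces $a \notin S^k$. Thus in both implications we may assume $a_1 \leq k$; this puts $C$ inside $\{-k, \ldots, k - 1\}$ and, since $|C| = k$, turns ``$C$ meets every pair'' into ``$C$ contains exactly one element from each pair'', i.e., $C$ and $\tau(C)$ partition $\{-k, \ldots, k - 1\}$. The final step is to identify this partition property with $a \in S^k$: since $a_1 \leq k$ forces $b_j = 0$ for $j > k$, the terms with $j > k$ contribute exactly $\{k, \ldots, n - 1\}$ to $-D - 1$, and peeling them off the basic identity yields $\tau(\{b_j - j\colon j \leq k\}) = \{-k, \ldots, k - 1\} \setminus C$. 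Comparing with $\tau(C) = \{-k, \ldots, k - 1\} \setminus C$ and applying $\tau$ once more gives $\{a_j - j\colon j \leq k\} = \{b_j - j\colon j \leq k\}$, i.e.\ $a_j = b_j$ for $j = 1, \ldots, k$, which together with $b_{k+1} = \dots = b_n = 0$ is exactly the condition $a \in S^k$. The principal obstacle is purely bookkeeping: keeping the shifts between $L, R$ and $C, D$ consistent, and tracking the two index ranges $\{-k, \ldots, k - 1\}$ versus $\{-k, \ldots, n - 1\}$ carefully.
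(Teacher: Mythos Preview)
Your proof is correct, but it takes a genuinely different route from the paper's. The paper argues directly: if $a\in S^k$ it plugs $a_j=b_j$ into~(\ref{permutation from YD}) to compute $w_a(k+i+1)$ explicitly; if $a\notin S^k$ it takes the first index $j$ where $a_j\neq b_j$, observes $b_j\ge j$, and again evaluates $w_a$ at the two relevant points. Your argument is more structural: you encode both sides via the content sets $C=\{a_j-j\}$ and $D=\{b_j-j\}$ and the involution $\tau(x)=-x-1$, and show that each condition is equivalent to $C$ being a transversal for the $\tau$-orbits in $\{-k,\dots,k-1\}$. The paper's approach is shorter and needs no auxiliary notation; yours makes the underlying symmetry transparent and avoids the somewhat delicate claim that $b_j\ge j$ at the first index of disagreement. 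One small point worth making explicit in your write-up: the passage from the set equality $\{a_j-j:j\le k\}=\{b_j-j:j\le k\}$ to the termwise equality $a_j=b_j$ uses that both sequences are strictly decreasing, which follows from $a$ and $b$ being weakly decreasing.
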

\begin{proof}
Let $b=(b_1,\dots,b_n)$ be the partition conjugated to $a=(a_1,\dots,a_k)$. First notice that if $w_a(k-i)=k+j>k$, then by~(\ref{permutation from YD}) we have $i=b_j-j$.

If $a\in S^k$, then $w_a(k+i+1)=w_a(k-j+b_j+1)=w_a(k-j+a_j+1)=k-j+1\le k$ and so $a$ is $k$-balanced.

If $a\not\in S^k$, then there is $j$ such that $a_1=b_1,\dots,a_{j-1}= b_{j-1}$ and $a_j\ne b_j$. It follows that $b_j\ge j$ and so $i:=b_j-j\ge0$. Then $w_a(k-i)=w_a(k-b_j+j)=k+j>k$. If $a_j>b_j$, then $w_a(k+i+1)=k+b_j+1>k$. If $a_j<b_j$, then $w_a(k+i+1)=k+b_j>k$.
\end{proof}

Recall from \cite{BE} that given $w\in S_m$, there exists a minimal integer $\ell(w)$, called the \textit{length} of~$w$, such that~$w$ can be expressed as a product of $\ell(w)$ simple ref\/lections $s_1,\dots,s_m$. It is well known that $\ell(w)$ is equal to the number of pairs $1\le i<j\le m$ such that $w(i)>w(j)$.

\begin{Lemma}\label{lemma length of permutation}
Let $w_a\in S_m$. Then $\ell(w_a)=|a|$.
\end{Lemma}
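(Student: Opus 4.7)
The plan is to combine the upper bound $\ell(w_a)\le|a|$ — immediate from~(\ref{definition of permutation}), which expresses $w_a$ as a product of $|a|$ simple reflections — with a matching lower bound obtained by directly counting the inversions of $w_a$ using the explicit formula~(\ref{permutation from YD}). Here I use the standard fact that for $w\in S_m$, $\ell(w)$ equals the number of pairs $1\le i<j\le m$ with $w(i)>w(j)$.

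First I will organize the data. Set $P_i:=k-i+1+a_i$ for $i=1,\dots,k$ and $Q_j:=k+j-b_j$ for $j=1,\dots,n$. By~(\ref{permutation from YD}), $w_a(P_i)=k-i+1$ and $w_a(Q_j)=k+j$, and as observed in the text following~(\ref{permutation from YD}), the sets $\{P_i\}$ and $\{Q_j\}$ partition $\{1,\dots,m\}$. Monotonicity of $a$ and $b$ yields $P_1>P_2>\dots>P_k$ with values $k>k-1>\dots>1$, and $Q_1<Q_2<\dots<Q_n$ with values $k+1<\dots<k+n$, so neither group contributes any inversions internally.

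Every inversion of $w_a$ is therefore a cross pair $(P_i,Q_j)$. Since $w_a(P_i)\le k<w_a(Q_j)$, such a pair is an inversion if and only if $P_i>Q_j$, which simplifies to $a_i+b_j\ge i+j$. The key combinatorial step is to identify this inequality with box-membership in the Young diagram $Y$ of $a$: if $(i,j)\in Y$ then $j\le a_i$ and $i\le b_j$, so $a_i+b_j\ge i+j$; conversely, if $(i,j)\notin Y$ then $a_i<j$, and by the basic duality $j\le a_i\iff i\le b_j$ also $b_j<i$, forcing $a_i+b_j\le i+j-2$. Hence the inversions of $w_a$ are in bijection with the boxes of $Y$, of which there are $|a|$, so $\ell(w_a)=|a|$.

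The only nontrivial point is the final equivalence between the numerical condition $a_i+b_j\ge i+j$ and box-membership in $Y$, which rests on the duality between a partition and its conjugate; everything else is a direct unwinding of~(\ref{permutation from YD}).
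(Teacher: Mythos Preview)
Your proof is correct and complete. It is, however, a genuinely different route from the paper's. The paper argues the lower bound $\ell(w_a)\ge|a|$ by induction on~$|a|$: whenever $a<a'$ with $|a'|=|a|+1$, Lemma~\ref{lemma relative Weyl group} gives an arrow $w_a\to w_{a'}$ in the Hasse graph, hence $\ell(w_a)<\ell(w_{a'})$, and the bound follows. You instead count inversions directly from~(\ref{permutation from YD}), reducing the problem to the combinatorial identity that the pairs $(i,j)$ with $a_i+b_j\ge i+j$ are exactly the boxes of the Young diagram. Your argument is self-contained and does not appeal to the Bruhat-order machinery of Lemma~\ref{lemma relative Weyl group}; it also yields an explicit bijection between inversions and boxes, which is a pleasant bonus. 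The paper's argument is shorter on the page but leans on the structure already established in Lemma~\ref{lemma relative Weyl group}.
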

\begin{proof}
By the def\/inition of $w_a$, it follows that $\ell(w_a)\le |a|$. On the other hand, if $a<a'$, then $w_a\rightarrow w_{a'}$ and thus also $\ell(w_a)<\ell(w_{a'})$. By induction on $|a|$, we have that $\ell(w_a)\ge|a|$.
\end{proof}

\section{Geometric structures attached to (\ref{double fibration diagram I})}\label{section geometry behind PT}
In Section \ref{section geometry behind PT} we will consider dif\/ferent geometric structures associated to~(\ref{double fibration diagram I}). Namely, we will consider in Section~\ref{section complex grassmannian} the associated homogeneous spaces, in Section~\ref{section filtration of TM and TCS} the f\/iltrations of tangent bundles of these parabolic geometries and in Section \ref{section projections} the projections $\eta$ and $\tau$.

\subsection{Homogeneous spaces}\label{section complex grassmannian}

A connected and simply connected Lie group $\mathrm{G}$ with Lie algebra $\mathfrak{g}$ is isomorphic to $\mathrm{Spin}(2m,\mathbb{C})$.
Let $\mathrm{R}$, $\mathrm{Q}$ and $\mathrm{P}$ be the parabolic subgroups of~$\mathrm{G}$ with Lie algebras $\mathfrak{r}$, $\mathfrak{q}$ and~$\mathfrak{p}$ that are associated to $\{\alpha_m\}$, $\{\alpha_k,\alpha_m\}$ and~$\{\alpha_k\}$, respectively, as explained in Section~\ref{section review}. We for brevity put $TS:=\mathrm{G}/\mathrm{R}$, $CS:=\mathrm{G}/\mathrm{Q}$ and $M:=\mathrm{G}/\mathrm{P}$. Recall from Section~\ref{section PT} that we call $TS$ the twistor space and~$CS$ the correspondence space.

\textit{The twistor space $TS$}. Let us f\/irst recall (see \cite[Section 6]{GW}) some well known facts about spinors.
Recall from (\ref{subspace of C2m}) that $\mathbb{W}:=\mathbb{C}^{m}$ is a maximal totally isotropic subspace of $\mathbb{C}^{2m}$. We can (via $h$) identify the dual space $\mathbb{W}^\ast$ with the subspace $[e_1^\ast,\dots,e_m^\ast]$. Put $\mathbb{S}:=\bigoplus_{i=0}^m\Lambda^i\mathbb{W}^\ast$. There is a canonical linear map $\mathbb{C}^{2m}\rightarrow\operatorname{End}(\mathbb{S})$ which is determined by $w\cdot\psi=i_w\psi$ and $w^\ast\cdot\psi=w^\ast\wedge\psi$ where $w\in\mathbb{W}$, $w^\ast\in\mathbb{W}^\ast$, $\psi\in\mathbb{S}$ and $i_w$ stands for the contraction by $w$. If $\psi\in\mathbb{S}$, then we put $T_\psi:=\{v\in\mathbb{C}^{2m}\colon v\cdot\psi=0\}$. If $\psi\ne0$, then~$T_\psi$ is a totally isotropic subspace and we call $\psi$ a~\textit{pure spinor} if $\dim T_\psi=m$ (which is equivalent to saying that $T_\psi$ is a~maximal totally isotropic subspace).

The standard linear isomorphism $\Lambda^2\mathbb{C}^{2m}\cong\mathfrak{g}$ gives an injective linear map $\mathfrak{g}\rightarrow\operatorname{End}(\mathbb{S})$. It is straightforward to verify that the map is a homomorphism of Lie algebras where the commutator in the associative algebra $\operatorname{End}(\mathbb{S})$ is the standard one. Hence, $\mathfrak{g}$ is a Lie subalgebra of $\operatorname{End}(\mathbb{S})$ and it turns out that~$\mathbb{S}$ is no longer irreducible under $\mathfrak{g}$ but it decomposes as $\mathbb{S}_+\oplus\mathbb{S}_-$ where $\mathbb{S}_+:=\bigoplus_{i=0}^m\Lambda^{2i}\mathbb{W}^\ast$ and $\mathbb{S}_-:=\bigoplus_{i=0}^m\Lambda^{2i+1}\mathbb{W}^\ast$. Then $\mathbb{S}_+$ and $\mathbb{S}_-$ are irreducible non-isomorphic complex spinor representations of $\mathfrak{g}$ with highest weights $\omega_m$ and $\omega_{m-1}$, respectively. It is well known that any pure spinor belongs to $\mathbb{S}_+$ or to $\mathbb{S}_-$ (which explains why the Grassmannian of maximal totally isotropic subspaces in~$\mathbb{C}^{2m}$ has two connected components).

Now we can easily describe the twistor space. The spinor $1\in\mathbb{S}_+$ is annihilated by all positive roots in $\mathfrak{g}$ and hence, it is a highest weight vector. Recall from Section~\ref{section review} that the line spanned by~1 is invariant under~$\mathrm{R}$ and since $T_1=\mathbb{W}$, we f\/ind that~$\mathrm{R}$ is the stabilizer of $\mathbb{W}$ inside $\mathrm{G}$. As~$\mathrm{G}$ is connected, we conclude that~$TS$ is the connected component of~$\mathbb{W}$ in the Grassmannian of maximal totally isotropic subspaces in~$\mathbb{C}^{2m}$.

\textit{The isotropic Grassmannian $M$.}
An irreducible $\mathfrak{g}$-module with highest weight $\omega_k$ is isomorphic to $\Lambda^k\mathbb{C}^{2m}$. Then $e_1\wedge e_2\wedge\dots\wedge e_k$ is clearly a highest weight vector and the corresponding point in $\mathbb P(\Lambda^k\mathbb{C}^{2m})$ can be viewed as the totally isotropic subspace $x_0:=\mathbb{C}^k$. We see that $M$ is the Grassmannian of totally isotropic $k$-dimensional subspaces in $\mathbb{C}^{2m}$. We denote by $\textbf{p}\colon \mathrm{G}\rightarrow M$ the canonical projection.

\textit{The correspondence space $CS$}.
The correspondence space $CS$ is the generalized f\/lag manifold of nested subspaces $\{(z,x)\colon z\in TS,\ x\in M,\ x\subset z\}$ and~$\mathrm{Q}$ is the stabilizer of $(\mathbb{W},x_0)$. Let $\textbf{q}\colon \mathrm{G}\rightarrow CS$ be the canonical projection.

\subsection[Filtrations of the tangent bundles of $M$ and $CS$]{Filtrations of the tangent bundles of $\boldsymbol{M}$ and $\boldsymbol{CS}$}\label{section filtration of TM and TCS} Recall from
 Section \ref{section review} that the $|2|$-grading $\mathfrak{g}=\mathfrak{g}_{-2}\oplus\mathfrak{g}_{-1}\oplus\dots\oplus\mathfrak{g}_2$ associated to $\{\alpha_k\}$ determines a 2-step f\/iltration $\{0\}=F_{0}^M\subset F_{-1}^M\subset F_{-2}^M=TM$ of the tangent bundle of $M$ where $\{0\}$ is the zero section. We put $G_{i}^M:=F_{i}^M/F_{i+1}^M$, $i=-2,-1$ so that the associated graded bundle $gr(TM)=G_{-2}^M\oplus G_{-1}^M$ is a locally trivial bundle of graded nilpotent Lie algebras with typical f\/iber $\mathfrak{g}_-$. Dually, there is a f\/iltration $T^\ast M=F_1^M\supset F_2^M\supset F_3^M=\{0\}$ where $F_i^M\cong (F^M_{-i+1})^\perp$. We put $G_i^M:=F_i^M/F_{i+1}^M$ so that $G_i^M\cong(G_{-i}^M)^\ast$. There are linear isomorphisms
\begin{gather}\label{linear isomorphisms over x0}
 \mathfrak{g}_{i}\cong \big(G^M_i\big)_{x_0},\qquad i=-2,-1,1,2.
\end{gather}

Recall from Section \ref{section wdo} that $\mathfrak{gr}^r_{x_0}$ denotes the vector space of weighted $r$-jets of germs of holomorhic functions at $x_0$ whose weighted $(r-1)$-jet vanishes. Then the isomorphisms from~(\ref{isom of graded jets over origin}) are	
\begin{gather*}
\mathfrak{gr}^{1}_{x_0}\cong \mathfrak{g}_1,\qquad \mathfrak{gr}^{2}_{x_0}\cong S^2\mathfrak{g}_1\oplus\mathfrak{g}_2,\qquad \mathfrak{gr}^{3}_{x_0}\cong S^3\mathfrak{g}_1\oplus\mathfrak{g}_1\otimes\mathfrak{g}_2, \qquad \dots
\end{gather*}
for small $r$ and in general
\begin{gather}\label{weighted jets at x_0}
\mathfrak{gr}^{r}_{x_0}\cong\bigoplus_{i+2j=r}S^i\mathfrak{g}_1\otimes S^j\mathfrak{g}_2.
\end{gather}

The $|3|$-grading $\mathfrak{g}=\bigoplus_{i=-3}^3\mathfrak{q}_i$ determined by $\{\alpha_k,\alpha_m\}$ induces a 3-step f\/iltration $TCS= F_{-3}^{CS}\supset F_{-2}^{CS}\supset F_{-1}^{CS}\supset F_0^{CS}=\{0\}$. We put $G_i^{CS}:= F_i^{CS}/F^{CS}_{i+1}$ so that $gr(TCS)=\bigoplus_{i=-3}^{-1}G_i^{CS}$ is a locally trivial vector bundle of graded nilpotent Lie algebras with typical f\/iber $\mathfrak{q}_-$.
Dually, we get a f\/iltration $T^\ast CS=F_1^{CS}\supset F_2^{CS}\supset F_3^{CS}\supset F_4^{CS}=\{0\}$ where $F_i^{CS}:=(F^{CS}_{-i+1})^\perp$. The associated graded vector bundle is $gr(T^\ast CS)=\bigoplus_{i=1}^3G_i^{CS}$ where we put $G_i^{CS}:=F_{i}^{CS}/F^{CS}_{i+1}$. Then as above, $G_i^{CS}\cong (G_{-i}^{CS})^\ast$.

The $\mathrm{Q}$-invariant subspaces $\mathbb{E}\oplus\mathfrak{q}$ and $\mathbb{F}\oplus\mathfrak{q}$ give a f\/iner f\/iltration of the tangent bundle, namely $F_{-1}^{CS}=E^{CS}\oplus F^{CS}$. Since the Lie bracket $\Lambda^2\mathfrak{q}_{-1}\rightarrow\mathfrak{q}_{-2}$ vanishes on $\Lambda^2\mathbb{E}\oplus\Lambda^2\mathbb{F}$, it follows that $E^{CS}$ and $F^{CS}$ are integrable distributions. This can be deduced also from the short exact sequences
\begin{gather}\label{E and F as kernels of tangent maps}
0\rightarrow E^{CS}\rightarrow TCS\xrightarrow{T\eta}T(TS)\rightarrow0\qquad \mathrm{and}\qquad 0\rightarrow F^{CS}\rightarrow TCS\xrightarrow{T\tau}TM\rightarrow0,
\end{gather}
i.e., $E^{CS}=\ker(T\eta)$ and $F^{CS}=\ker(T\tau)$. Notice that $(T\tau)^{-1}(F_{-1}^M)=F^{CS}_{-2}$.

\subsection[Projections $\tau$ and $\eta$]{Projections $\boldsymbol{\tau}$ and $\boldsymbol{\eta}$}\label{section projections}

Recall from (\ref{subspace of C2m}) that $\mathbb{C}^{2n}:=[e_{k+1},\dots,e_m,e_{k+1}^\ast,\dots,e_m^\ast]$ and $\mathbb{C}^n:=[e_{k+1},\dots,e_m]$, i.e., we view~$\mathbb{C}^{2n}$ and~$\mathbb{C}^n$ as subspaces of~$\mathbb{C}^{2m}$. On~$\mathbb{C}^{2n}$ we consider the non-degenerate bilinear form~$h|_{\mathbb{C}^{2n}}$ which we for brevity denote by~$h$. Then $\mathbb{C}^n$ is a maximal totally isotropic subspace of~$\mathbb{C}^{2n}$.

The f\/ibers of $\tau$ and $\eta$ are homogeneous spaces of parabolic geometries which (see~\cite{BE}) can be recovered from the Dynkin diagrams given in~(\ref{double fibration diagram I}).

\begin{Lemma}\label{lemma fibers}\quad
\begin{enumerate}\itemsep=0pt
 \item[$(a)$] The fibers of $\tau$ are biholomorphic to the Grassmannian of $k$-dimensional subspaces in $\mathbb{C}^{n+k}$.
 \item [$(b)$] The fibers of $\eta$ are biholomorphic to the connected component $\mathrm{Gr}^+_h(n,n)$ of $\mathbb{C}^n$ in the Grassmannian of maximal totally isotropic subspaces in~$\mathbb{C}^{2n}$.
\end{enumerate}
\end{Lemma}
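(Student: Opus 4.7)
The plan is to reduce each fibre to a single computation by $G$-equivariance and then carry out an elementary linear-algebra identification using the concrete incidence model of $CS$ from Section~\ref{section complex grassmannian}.

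Step~1 ($G$-homogeneity). The spin group $G$ acts transitively on $TS$ and on $M$, and both projections $\eta\colon CS\to TS$ and $\tau\colon CS\to M$ are $G$-equivariant. Hence all fibres of $\tau$ are biholomorphic to $\tau^{-1}(x_0)$ and all fibres of $\eta$ are biholomorphic to $\eta^{-1}(z_0)$, where I take as basepoints the origins $x_0:=\mathbb{C}^k\in M$ and $z_0:=\mathbb{W}=\mathbb{C}^m\in TS$.

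Step~2 (the two fibres as classical Grassmannians). From $CS=\{(z,x)\colon z\in TS,\ x\in M,\ x\subset z\}$: the fibre $\eta^{-1}(z_0)$ consists of the $k$-dimensional subspaces $x\subset z_0$, and because $z_0$ is already totally isotropic the isotropy condition on $x$ is automatic, so this fibre is exactly the Grassmannian of $k$-dimensional subspaces of $z_0\cong\mathbb{C}^{n+k}$. The fibre $\tau^{-1}(x_0)$ consists of those $z\in TS$ with $z\supset x_0$; every such $z$ lies inside $x_0^\perp$, and the $2n$-dimensional quotient $x_0^\perp/x_0\cong\mathbb{C}^{2n}$ carries the non-degenerate symmetric form induced by $h$. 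The assignment $z\mapsto z/x_0$ is then a holomorphic bijection from the set of maximal totally isotropic subspaces of $\mathbb{C}^{2m}$ containing $x_0$ onto the set of maximal totally isotropic subspaces of this $\mathbb{C}^{2n}$.

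Step~3 (biholomorphicity). Both identifications are given by polynomial operations on Pl\"ucker-type coordinates and are therefore biholomorphisms. Matching them to the two projections yields the two biholomorphisms claimed in the lemma.

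The main technical point is the connected-component bookkeeping in the isotropic case: one must verify that restricting the source to the connected component $TS$ of $\mathbb{W}$ inside the full maximal isotropic Grassmannian of $\mathbb{C}^{2m}$ produces exactly the connected component $\mathrm{Gr}^+_h(n,n)$ of $\mathbb{W}/x_0=\mathbb{C}^n$ in the maximal isotropic Grassmannian of $\mathbb{C}^{2n}$. This follows from the continuity of the quotient map $z\mapsto z/x_0$ together with the identification $\mathbb{W}/x_0=\mathbb{C}^n$.
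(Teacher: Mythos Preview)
Your proof is correct and follows essentially the same route as the paper's: reduce to a single fibre by $G$-equivariance, identify $\eta^{-1}(\mathbb{W})$ with the ordinary Grassmannian of $k$-planes in $\mathbb{W}\cong\mathbb{C}^{n+k}$ (isotropy being automatic), and identify $\tau^{-1}(x_0)$ with $\mathrm{Gr}^+_h(n,n)$ via the correspondence $z\leftrightarrow z/x_0$ inside $x_0^\perp/x_0\cong\mathbb{C}^{2n}$ --- the paper simply writes the inverse map $y\mapsto(x_0\oplus y,x_0)$ using the splitting $x_0^\perp=x_0\oplus\mathbb{C}^{2n}$. Your explicit handling of the connected-component bookkeeping is a mild elaboration over the paper, which merely asserts the map is a biholomorphism; note incidentally that the labels in the lemma's statement have $\tau$ and $\eta$ interchanged relative to the paper's own proof and to its later use of $X_0=\tau^{-1}(x_0)\cong\mathrm{Gr}^+_h(n,n)$, and your assignments agree with the latter.
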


\begin{proof}As the f\/ibers over distinct points are biholomorphic, it suf\/f\/ices to look at the f\/ibers of~$\eta$ and~$\tau$ over $\mathbb{W}$ and~$x_0$, respectively.

(a) By def\/inition, $\eta^{-1}(\mathbb{W})$ is the set of $k$-dimensional totally isotropic subspaces in $\mathbb{W}$. As $\mathbb{W}$ is already totally isotropic, the f\/irst claim follows.

(b) Notice that $x_0^\bot=x_0\oplus \mathbb{C}^{2n}$. Then it is easy to see that $y\in\mathrm{Gr}^+_h(n,n)\mapsto(x_0\oplus y,x_0)\in\tau^{-1}(x_0)$ is a~biholomorphism.
\end{proof}

We will use the following notation. Assume that $X\in M(2m,k,\mathbb{C})$ and $Y\in M(2m,n,\mathbb{C})$ have maximal rank. Then we denote by $[X]$ the $k$-dimensional subspace of $\mathbb{C}^{2m}$ that is spanned by the columns of the matrix and by $[X|Y]$ the f\/lag of nested subspaces $[X]\subset[X]\oplus[Y]$.

It is straightforward to verify that
\begin{gather}\label{affine subset of M}
(\textbf{p}\circ\exp)\colon \ \mathfrak{g}_- \rightarrow M,\\
(\textbf{p}\circ\exp)\left(
\begin{matrix}
0&0&0&0\\
X_1&0&0&0\\
X_2&0&0&0\\
Y&-X_2^T&-X_1^T&0\\
\end{matrix}
\right) =
\left[
\begin{matrix}
1_k\\
X_1\\
X_2\\
Y-\frac{1}{2}(X_1^TX_2+X^T_2X_1)\\
\end{matrix}
\right].\nonumber
\end{gather}
We see that $\mathcal{X}:=\textbf{p}\circ\exp(\mathfrak{g}_-)$ is an open, dense and af\/f\/ine subset of $M$ and that any $(z,x)\in\tau^{-1}(\mathcal{X})$ can be represented by
\begin{gather}\label{coordinates on tau^{-1}(U)}
\left[
\begin{array}{c|c}
1_k&0\\
X_1&A\\
X_2&B\\
Y-\frac{1}{2}(X_1^TX_2+X_2^TX_1)&C\\
\end{array}
\right],
\end{gather}
where $A,B\in M(n,\mathbb{C})$, $C\in M(k,n,\mathbb{C})$ are such that
$\left[
\begin{matrix}
A\\
B\\
\end{matrix}
\right]\in\mathrm{Gr}^+_{h}(n,n)$ and $C=-(X_1^TB+X_2^TA)$. We immediately get the following observation.

\begin{Lemma}\label{lemma set tau^{-1}(U)}
The set $\tau^{-1}(\mathcal{X})$ is biholomorphic to $\mathcal{X}\times\tau^{-1}(x_0)$. The restriction of $\tau$ to this set is then the projection onto the first factor.
\end{Lemma}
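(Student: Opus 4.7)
The plan is to construct an explicit biholomorphism $\Phi\colon \mathcal{X}\times\tau^{-1}(x_0)\rightarrow\tau^{-1}(\mathcal{X})$ using the $\mathrm{G}$-action and the exponential chart. From~(\ref{affine subset of M}) the map $\textbf{p}\circ\exp\colon \mathfrak{g}_-\rightarrow\mathcal{X}$ is a biholomorphism, so setting $s(x):=\exp\bigl((\textbf{p}\circ\exp)^{-1}(x)\bigr)$ yields a holomorphic section $s\colon \mathcal{X}\rightarrow\mathrm{G}$ of $\textbf{p}$ that satisfies $s(x)\cdot x_0=x$ for every $x\in\mathcal{X}$.

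Using $s$, define $\Phi\bigl(x,(z',x_0)\bigr):=\bigl(s(x)\cdot z',\,x\bigr)$. Since $s(x)\cdot x_0=x$ and $x_0\subset z'$, one has $x\subset s(x)\cdot z'$, so $\Phi(x,(z',x_0))$ is a genuine point of $CS$ belonging to $\tau^{-1}(x)$. The candidate inverse is $\Psi(z,x):=\bigl(x,\,(s(x)^{-1}\cdot z,\,x_0)\bigr)$, which is well defined because $s(x)^{-1}\cdot x=x_0$, and $\Phi\circ\Psi$, $\Psi\circ\Phi$ reduce to identities in a single line. Both maps are holomorphic since $s$, $\exp$, and the $\mathrm{G}$-action on $CS$ are, and the identity $\tau\circ\Phi(x,y)=x$ holds by construction of $\Phi$.

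An alternative, completely concrete route is to read off the trivialization directly from the coordinate description~(\ref{coordinates on tau^{-1}(U)}): the triple $(X_1,X_2,Y)$ parameterizes $\mathcal{X}$ through~(\ref{affine subset of M}), while the pair $(A,B)$ modulo the $\mathrm{GL}(n,\mathbb{C})$-action on columns parameterizes $\tau^{-1}(x_0)\cong\mathrm{Gr}^+_h(n,n)$ by Lemma~\ref{lemma fibers}(b), with $C=-(X_1^TB+X_2^TA)$ determined by the other data. The splitting of these two groups of parameters is manifest and, after checking that the isotropy conditions separate accordingly, yields the same biholomorphism.

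I expect no genuine obstacle: the content amounts to the fact that the holomorphic fibration $\tau$ trivializes over the open cell $\mathcal{X}$ via the exponential chart, which is the standard mechanism by which parabolic fibrations are locally trivial. The only point requiring care is the holomorphicity of the section $s$, and this is immediate from the explicit polynomial inverse of $\textbf{p}\circ\exp$ visible in~(\ref{affine subset of M}).
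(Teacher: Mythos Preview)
Your proposal is correct. The paper itself gives no formal proof: it simply states the lemma as an ``immediate observation'' following the coordinate description~(\ref{coordinates on tau^{-1}(U)}), which is precisely your second ``alternative concrete route.'' Your first argument via the holomorphic section $s=\exp\circ(\textbf{p}\circ\exp)^{-1}$ and the $\mathrm{G}$-action is a slightly more abstract packaging of the same fact; it has the advantage of making clear that the trivialization is $\mathrm{G}$-equivariant in the appropriate sense and that the mechanism is the standard one for homogeneous fibrations over a big cell, whereas the coordinate approach has the advantage of being explicit enough to feed directly into the next lemma on the chart~$\mathcal{Y}$.
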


The set $\tau^{-1}(\mathcal{X})$ is not af\/f\/ine as dif\/ferent choices of $A$ and $B$ might lead to the same element in $\tau^{-1}(\mathcal{X})$. Let $\mathcal{Y}$ be the subset of $\tau^{-1}(\mathcal{X})$ of those nested f\/lags $x\subset z$ which can be represented by a matrix as above with $A$ regular. In that case we may assume $A=1_n$ which uniquely pins down~$B$. It is straightforward to f\/ind that $B=-B^T$ and conversely, any skew-symmetric $n\times n$ matrix determines a totally isotropic $n$-dimensional subspace in~$\mathbb{C}^{2n}$. We see that $\mathcal{Y}$ is an open and af\/f\/ine set which is biholomorphic to $\mathfrak{g}_-\times A(n,\mathbb{C})$. In order to write down also $\eta$ as a canonical projection $\mathbb{C}^{{m\choose2}+nk}\rightarrow\mathbb{C}^{m\choose2}$, it will be convenient to choose a dif\/ferent coordinate system on $\mathcal{Y}$.

\begin{Lemma}\label{lemma projection eta}
Let $\mathcal{Y}$ be as above and put $\mathcal{Z}:=\eta(\mathcal{Y})$. Then $\mathcal{Y}$ and $\mathcal{Z}$ are open affine sets and there is a commutative diagram of holomorphic maps
\begin{gather}\label{other coordinates on CS}\begin{split} &
 \xymatrix{\mathcal{Y}\ar[d]^{\eta|_\mathcal{Y}}\ar[r]&A(m,\mathbb{C})\times M(n,k,\mathbb{C})\ar[d]^{pr_1}\\
\mathcal{Z}\ar[r]&A(m,\mathbb{C}),}\end{split}
\end{gather}
 where $pr_1$ is the canonical projection and the horizontal arrows are biholomorphisms.
\end{Lemma}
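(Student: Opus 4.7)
The plan is to introduce global affine coordinates on both $\mathcal{Y}$ and $\mathcal{Z}$ via the standard graph representation of maximal totally isotropic subspaces of $\mathbb{C}^{2m}$. First I would identify $\mathcal{Z}\subset TS$ as the open subset consisting of those $z$ that project isomorphically onto $\mathbb{W}$ under the splitting $\mathbb{C}^{2m}=\mathbb{W}\oplus\mathbb{W}^\ast$: the condition defining $\mathcal{Y}\subset\tau^{-1}(\mathcal{X})$ is precisely the invertibility of the upper $m\times m$ block $\bigl(\begin{smallmatrix}1_k & 0\\ X_1 & A\end{smallmatrix}\bigr)$ of the frame (\ref{coordinates on tau^{-1}(U)}), and this block is the $\mathbb{W}$-projection of that frame, so it depends only on $z$. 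Any such $z$ admits a unique frame of the shape $\bigl(\begin{smallmatrix}1_m\\ S\end{smallmatrix}\bigr)$, and total isotropy of $z$ translates into $JS+(JS)^T=0$, where $J$ is the matrix of $h$ pairing $\mathbb{W}$ with $\mathbb{W}^\ast$ in the ordered bases from (\ref{subspace of C2m}). Setting $T:=JS$ gives a biholomorphism $\mathcal{Z}\to A(m,\mathbb{C})$ and hence the bottom arrow of (\ref{other coordinates on CS}); openness and affineness of $\mathcal{Z}$ follow because $\eta$ is a holomorphic submersion and $A(m,\mathbb{C})$ is affine.

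I would then compute $T$ explicitly in the coordinates $(X_1,X_2,Y,B)$ on $\mathcal{Y}$ by right-multiplying the frame (\ref{coordinates on tau^{-1}(U)}) with $A=1_n$ by the inverse of $\bigl(\begin{smallmatrix}1_k & 0\\ X_1 & 1_n\end{smallmatrix}\bigr)$ to bring it into the graph form. Using $C=-(X_1^TB+X_2^T)$ from (\ref{coordinates on tau^{-1}(U)}), a direct block-matrix computation gives
\begin{gather*}
T=\begin{pmatrix}Y'+X_1^TBX_1+X_2^TX_1 & -X_1^TB-X_2^T\\ X_2-BX_1 & B\end{pmatrix},\qquad Y'=Y-\tfrac12\bigl(X_1^TX_2+X_2^TX_1\bigr),
\end{gather*}
and invoking $Y\in A(k,\mathbb{C})$ together with $B\in A(n,\mathbb{C})$ yields $T+T^T=0$ after a short calculation using $P:=-\tfrac12(X_1^TX_2+X_2^TX_1)$ is symmetric while $Y$ is skew. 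This defines the top arrow of (\ref{other coordinates on CS}) as $(X_1,X_2,Y,B)\mapsto(T,X_1)$.

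To finish I would exhibit the inverse explicitly: given $(T,X_1)$, set $B:=T_{22}$, $X_2:=T_{21}+T_{22}X_1$, and $Y:=T_{11}-X_1^TT_{22}X_1-\tfrac12\bigl(X_2^TX_1-X_1^TX_2\bigr)$; skewness of $T_{11}$ and $T_{22}$ then forces $Y\in A(k,\mathbb{C})$, and the identity $T_{12}=-T_{21}^T$ is automatically consistent with the explicit formula for $T_{12}$ read from the block expression above. Commutativity of (\ref{other coordinates on CS}) is immediate, since $\eta$ followed by $\mathcal{Z}\cong A(m,\mathbb{C})$ sends $(X_1,X_2,Y,B)\mapsto z\mapsto T$, while the top arrow followed by $pr_1$ also delivers $T$. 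The only genuine obstacle in this proof is the block-matrix algebra that produces the explicit formula for $T$ and verifies its skew-symmetry; the remainder is formal bookkeeping.
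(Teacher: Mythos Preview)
Your proof is correct and follows essentially the same strategy as the paper's: bring the frame for $z$ into the graph form $\bigl(\begin{smallmatrix}1_m\\ S\end{smallmatrix}\bigr)$ by column reduction, read off the biholomorphism $\mathcal{Z}\to A(m,\mathbb{C})$, and pair it with the fiber coordinate $X_1$. The paper does exactly this Gaussian elimination and defines the top arrow as $(z,x)\mapsto (W,X_1)$ with $W$ the lower $m\times m$ block of the reduced frame; your $S$ is the paper's $W$, and your explicit inverse is a welcome addition that the paper omits.

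The one point where you are actually more precise than the paper is the introduction of $J$, the matrix of the pairing between $\mathbb{W}$ and $\mathbb{W}^\ast$ in the chosen bases. Because the dual basis is ordered as $e^\ast_{k+1},\dots,e^\ast_m,e^\ast_1,\dots,e^\ast_k$, the isotropy condition on the graph is $JS+(JS)^T=0$ rather than $S+S^T=0$; the block structure of $S$ has row sizes $(n,k)$ and column sizes $(k,n)$, so checking skewness block-by-block only becomes transparent after passing to $T=JS$, which has the symmetric $(k,n)\times(k,n)$ block pattern you display. The paper writes ``$W=-W^T$'' directly, which is correct as a statement about the full $m\times m$ matrix but obscures this point. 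Your insertion of $J$ makes the verification clean and your block formula for $T$ agrees (after the row swap effected by $J$) with the paper's entries $W_0,W_1,W_2$.
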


\begin{proof}
Let $(z,x)$ be the nested f\/lag corresponding to (\ref{coordinates on tau^{-1}(U)}) where $A=1_n$ so that $B=-B^T$. Put for brevity $Y':=Y-\frac{1}{2}(X_1^TX_2+X_2^TX_1)$ and $C:=-X_2^T-X_1^TB$.
The map in the f\/irst row in (\ref{other coordinates on CS}) is $(z,x)\mapsto(W,Z)$ where
\begin{gather*}\label{coordinates on W}
W=\left(
\begin{matrix}
 W_1&W_2\\
 W_0&-W_1^T
\end{matrix}
\right)
\end{gather*}
and
$ Z:=X_1$, $W_1:=X_2-BX_1$, $W_0:=Y'-CX_1$, $W_2:=B$.
Then $W=-W^T$ and the map $\mathcal{Y}\rightarrow A(m,\mathbb{C})\times M(n,k,\mathbb{C})$ is clearly a biholomorphism.
In order to have a geometric interpretation of the map, consider the following. Using Gaussian elimination on the columns of the matrix~(\ref{coordinates on tau^{-1}(U)}), we can eliminate the $X_1$-block and get a new matrix	
\begin{gather*}
 \left(
\begin{matrix}
1_k&0\\
0&1_n\\
X_2-BX_1&B\\
Y'-CX_1&C
\end{matrix}
\right)=\left(
\begin{matrix}
1_m\\
W
\end{matrix}
\right).
\end{gather*}
The columns of the matrix span the same totally isotropic subspace $z$ as the original matrix. Moreover, it is clear that $z$ admits a unique basis of this form. From this we easily see that $\mathcal{Z}$ is indeed an open af\/f\/ine subset of~$TS$ which is biholomorphic to~$A(m,\mathbb{C})$. In these coordinate systems, the restriction of $\eta$ is the projection onto the f\/irst factor.
\end{proof}

\section[The Penrose transform for the $k$-Dirac complexes]{The Penrose transform for the $\boldsymbol{k}$-Dirac complexes}\label{section computing PT}
In Section \ref{section computing PT} we will consider the relative BGG sequence associated to a particular $\mathfrak{r}$-dominant and integral weight as explained in Section \ref{section PT}. More explicitly, we will def\/ine in Section \ref{section relative double complex} for each $p,q\ge0$ a sheaf of relative $(p,q)$-forms and we get a Dolbeault-like double complex. Then we will show (see Section~\ref{section relative de Rham}) that this double complex contains a relative holomorphic de Rham complex. Then in Section~\ref{section relative twisted double complex} we will twist each sheaf of relative $(p,q)$-forms as well as the holomorhic de Rham complex by a certain pullback sheaf. Using some elementary representation theory, we will turn (see Section~\ref{section relative BGG sequence}) the twisted relative de Rham complex into the relative BGG sequence. In Section~\ref{section direct images} we will compute direct images of sheaves in the relative BGG sequence.

We will use the following notation. We denote by $\mathcal{O}_\mathfrak{q}$ and $\mathcal{E}^{p,q}_\mathfrak{q}$ the structure sheaf and the sheaf of smooth $(p,q)$-forms, respectively, over~$CS$. We denote the corresponding sheaves over~$TS$ by the subscript~$\mathfrak{r}$. If $W$ is a~holomorphic vector bundle over $CS$, then we denote by $\mathcal{O}_\mathfrak{q}(W)$ the sheaf of holomorphic sections of~$W$ and by $\mathcal{E}^{p,q}_\mathfrak{q}(W)$ the sheaf of smooth $(p,q)$-forms with values in $W$. We for brevity put $\mathcal{E}_\ast:=\mathcal{E}^{0,0}_\ast$ and $\mathcal{E}^{p,q}_\ast(\mathcal{U}):=\Gamma(\mathcal{U},\mathcal{E}_\ast^{p,q})$ where $\mathcal{U}$ is an open set and $\ast=\mathfrak{q}$ or $\mathfrak{r}$. Moreover, we put $\eta^\ast\mathcal{E}^{p,q}_\mathfrak{r}:= \mathcal{E}_\mathfrak{q}\otimes_{\eta^{-1}\mathcal{E}_\mathfrak{r}}\mathcal{E}^{p,q}_\mathfrak{r}$ where we use that $\eta^{-1}\mathcal{E}_\mathfrak{r}$ is naturally a sub-sheaf of~$\mathcal{E}_\mathfrak{q}$.

\subsection{Double complex of relative forms}\label{section relative double complex}
Recall from Lemma \ref{lemma projection eta} that $\mathcal{Y}$ is biholomorphic to $A(m,\mathbb{C})\times M(n,k,\mathbb{C})$, that $\eta(\mathcal{Y})=\mathcal{Z}$ is biholomorphic to $A(m,\mathbb{C})$ and that the canonical map $\mathcal{Y}\rightarrow\mathcal{Z}$ is the projection onto the f\/irst factor. In this way we can use matrix coef\/f\/icients on $M(n,k,\mathbb{C})$ as coordinates on the f\/ibers of $\eta$. We will write $Z=(z_{\alpha i})\in M(n,k,\mathbb{C})$ and
if $I=((\alpha_1,i_1),\dots,(\alpha_p,i_p))$ is a multi-index where $(\alpha_j,i_j)\in I(n,k):=\{(\alpha,i)\colon \alpha=1,\dots,n,\ i=1,\dots,k\}$, $j=1,\dots,p$, then we put $dz_I:=d z_{(\alpha_1,i_1)}\wedge\dots\wedge d z_{(\alpha_p,i_p)}$ and $|I|=p$.

We call
\begin{gather*}
 \mathcal{E}^{p+1,q}_\eta:=\mathcal{E}^{p+1,q}_\mathfrak{q}/(\eta^\ast\mathcal{E}^{1,0}_\mathfrak{r}\wedge\mathcal{E}_\mathfrak{q}^{p,q}),\qquad p,q\ge0
\end{gather*}
the \textit{sheaf of relative $(p+1,q)$-forms}. By the $\mathrm{G}$-action, it is clearly enough to understand the space of sections of this sheaf over the open set $\mathcal{Y}$ from Section~\ref{section projections}.
Given $\omega\in\mathcal{E}^{p,q}_\eta(\mathcal{Y})$, it is easy to see that there is a unique $(p,q)$-form cohomologous to $\omega$ which can be written in the form
\begin{gather}\label{relative form in coordinates}
\sideset{}{'}\sum_{|I|=p} dz_I\wedge \omega_I,
\end{gather}
where $\Sigma'$ denote that the summation is performed only over strictly increasing multi-indeces\footnote{We order the set $I(n,k)$ lexicographically, i.e., $(\alpha,i)<(\alpha',i')$ if $\alpha<\alpha'$ or $\alpha=\alpha'$ and $i<i'$.} and each $\omega_I\in\mathcal{E}^{0,q}_\mathfrak{q}(\mathcal{Y})$.

As $\eta$ is holomorphic, $\partial$ and $\bar\partial$ commute with the pullback map $\eta^\ast$. We see that $\partial(\eta^\ast\mathcal{E}^{1,0}_\mathfrak{r}\wedge\mathcal{E}_\mathfrak{q}^{p,q})\subset \eta^\ast\mathcal{E}^{1,0}_\mathfrak{r}\wedge\mathcal{E}_\mathfrak{q}^{p+1,q}$ and $\bar\partial(\eta^\ast\mathcal{E}^{1,0}_\mathfrak{r}\wedge\mathcal{E}_\mathfrak{q}^{p,q})\subset \eta^\ast\mathcal{E}^{1,0}_\mathfrak{r}\wedge\mathcal{E}_\mathfrak{q}^{p,q+1}$ and thus, $\partial$ and $\bar\partial$ descend to dif\/ferential operators
\begin{gather*}
\partial_\eta\colon \ \mathcal{E}^{p,q}_\eta\rightarrow\mathcal{E}^{p+1,q}_\eta\qquad \mathrm{and}\qquad \bar\partial\colon \ \mathcal{E}^{p,q}_\eta\rightarrow\mathcal{E}^{p,q+1}_\eta,
\end{gather*}
respectively. From the def\/initions it easily follows that:
\begin{gather}\label{properties of relative dbar}
 \partial_\eta(\omega\wedge\omega')=(\partial_\eta\omega)\wedge\omega'+(-1)^{p+q}\omega\wedge\partial_\eta\omega',\\
 \partial_\eta f=\sum_{(\alpha,i)\in I(n,k)}\frac{\partial f}{\partial z_{\alpha i}}dz_{\alpha i},\nonumber
\end{gather}
where $\omega\in\mathcal{E}^{p,q}_\eta(\mathcal{Y})$, $\omega'\in\mathcal{E}^{p',q'}_\eta(\mathcal{Y})$ and $f\in\mathcal{E}(\mathcal{Y})$. Recall from Section \ref{section filtration of TM and TCS} that $\partial_{z_{\alpha i}}\in\Gamma(E^{1,0}|_{\mathcal{Y}})$ and thus, $\partial_\eta f(x)$, $ x\in\mathcal{Y}$ depends only on the f\/irst weighted jet~$\mathfrak{j}^1_xf$ of $f$ at~$x$ (see Section~\ref{section wdo}).

Recall from (\ref{E and F as kernels of tangent maps}) that the distribution $E^{CS}$ is equal to $\ker(T\eta)$.

\begin{Proposition}\label{thm double complex relative forms}\quad
\begin{enumerate}\itemsep=0pt
 \item[$(i)$] The sheaf $\mathcal{E}^{p,q}_\eta$ is naturally isomorphic to the sheaf $\mathcal{E}^{0,q}_\mathfrak{q}(\Lambda^{p}E^{CS\ast})$ of smooth $(0,q)$-forms with values in the vector bundle $\Lambda^p E^{CS\ast}$ and $(\mathcal{E}^{p,\ast}_\eta,\bar\partial)$ is a resolution of $\mathcal{O}_\mathfrak{q}(\Lambda^p E^{CS\ast})$ by fine sheaves.
 \item [$(ii)$] $\partial_\eta$ is a linear $\mathrm{G}$-invariant differential operator of weighted order one and the sequence of sheaves $(\mathcal{E}^{\ast,q}_\eta,\partial_\eta)$, $q\ge0$ is exact. \label{lemma local exactness of relative d}
 \item [$(iii)$]The data $(\mathcal{E}^{p,q}_\eta,(-1)^p\bar\partial,\partial_\eta)$ define a double complex of fine sheaves with exact rows and columns.
\end{enumerate}
\end{Proposition}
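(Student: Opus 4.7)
My plan is to treat the three claims in order.

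For~(i), I will invoke the short exact sequence of holomorphic vector bundles
$0\to E^{CS}\to TCS\to\eta^\ast T(TS)\to0$
from~(\ref{E and F as kernels of tangent maps}) and dualize on the holomorphic cotangent side to
$0\to\eta^\ast T^{\ast 1,0}TS\to T^{\ast 1,0}CS\to E^{CS\ast}\to0$.
The induced filtration on $\Lambda^p T^{\ast 1,0}CS$ has top quotient $\Lambda^p E^{CS\ast}$, and the subsheaf $\eta^\ast\mathcal{E}^{1,0}_\mathfrak{r}\wedge\mathcal{E}^{p-1,q}_\mathfrak{q}$ is by construction the kernel of the resulting surjection of sheaves. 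This yields the asserted isomorphism $\mathcal{E}^{p,q}_\eta\cong\mathcal{E}^{0,q}_\mathfrak{q}(\Lambda^p E^{CS\ast})$, under which the induced operator $\bar\partial$ becomes the standard Dolbeault differential with values in the holomorphic bundle $\Lambda^p E^{CS\ast}$; the fine resolution statement is then the Dolbeault resolution with coefficients in this bundle.

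For~(ii), $\mathrm{G}$-invariance of $\partial_\eta$ is inherited from that of $\partial$ and $\eta^\ast\mathcal{E}^{1,0}_\mathfrak{r}$. The local formula in~(\ref{properties of relative dbar}) shows that $\partial_\eta$ involves only the holomorphic vector fields $\partial/\partial z_{\alpha i}$; since these are sections of $E^{CS}\subset F^{CS}_{-1}$, its weighted order is at most (and in fact exactly) one. For local exactness I will pass to the trivialization of Lemma~\ref{lemma projection eta}, in which $\eta|_\mathcal{Y}$ becomes the product projection $A(m,\mathbb{C})\times M(n,k,\mathbb{C})\to A(m,\mathbb{C})$. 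Writing a relative form as in~(\ref{relative form in coordinates}), $\partial_\eta$ acts, up to the sign $(-1)^p$, as the holomorphic exterior differential in the fiber variables $z_{\alpha i}$ with the $(0,q)$-form coefficients regarded as smooth parameters depending on the remaining variables. A parameter version of the classical holomorphic Poincar\'e lemma on the polydisk then produces an integration homotopy that establishes the required exactness on $\mathcal{Y}$, and arbitrary open subsets of $CS$ are handled by transporting along the transitive $\mathrm{G}$-action.

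For~(iii), the double-complex identities $\partial_\eta^2=0$, $\bar\partial^2=0$ and $\partial_\eta\bar\partial+\bar\partial\partial_\eta=0$ (with the sign convention $(-1)^p\bar\partial$) descend at once from $\partial^2=0$, $\bar\partial^2=0$, $\partial\bar\partial+\bar\partial\partial=0$ on $CS$; each $\mathcal{E}^{p,q}_\eta$ is fine as a module over $\mathcal{E}_\mathfrak{q}$. Exactness of rows is~(ii) and of columns is~(i). The main obstacle will be the local exactness of $\partial_\eta$ in~(ii): I must verify that the standard integration homotopy for the holomorphic Poincar\'e lemma continues to satisfy $\partial_\eta H+H\partial_\eta=\mathrm{id}$ after passing to the quotient sheaves $\mathcal{E}^{p,q}_\eta$, despite the fact that the coefficients carry arbitrary smooth anti-holomorphic dependence on both fiber and base variables.
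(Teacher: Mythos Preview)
Your approach is correct and matches the paper's. For the concern you flag at the end of~(ii), the paper's device is to expand each $(0,q)$-coefficient $\omega_I$ in the basis $d\bar w_J$ (with $w_1,\dots,w_\ell$ the full set of holomorphic coordinates on $\mathcal{Y}$), writing $\omega_I=\sideset{}{'}\sum_{J} f_{I,J}\,d\bar w_J$; then $\partial_\eta\omega=0$ decouples into $\partial_\eta\sigma_J=0$ for each multi-index $J$ separately, where $\sigma_J:=\sideset{}{'}\sum_I f_{I,J}\,dz_I$ now has smooth \emph{scalar} coefficients, and the standard integration argument (as in the proof of the Dolbeault lemma, \cite[Theorem~2.3.3]{H}) applied to each $\sigma_J$ yields $\phi_J$ with $\partial_\eta\phi_J=\sigma_J$, whence $\partial_\eta\big(\sideset{}{'}\sum_J\phi_J\wedge d\bar w_J\big)=\omega$.
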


\begin{proof} $(i)$ By def\/inition, the sequence of vector bundles $0\rightarrow E^{CS\bot}\rightarrow T^\ast CS\rightarrow E^{CS\ast}\rightarrow0$ is short exact. Hence, also the sequence $0\rightarrow E^{CS\bot}\wedge\Lambda^{p}T^\ast CS\rightarrow\Lambda^{p+1} T^\ast CS\rightarrow \Lambda^{p+1}E^{CS\ast}\rightarrow0$, $p\ge1$ is short exact. In view of the isomorphism $\mathcal{E}^{p+1,q}_\mathfrak{q}\cong\mathcal{E}^{0,q}_\mathfrak{q}(\Lambda^{p+1}T^\ast CS)$, it is enough to show that $\eta^\ast\mathcal{E}^{1,0}_\mathfrak{r}\wedge\mathcal{E}_\mathfrak{q}^{p,q}$ is isomorphic to $\mathcal{E}^{0,q}_\mathfrak{q}(E^{CS\bot}\wedge\Lambda^{p}T^\ast CS)\cong\mathcal{E}_\mathfrak{q}(E^{CS\bot})\wedge\mathcal{E}^{0,q}_\mathfrak{q}(\Lambda^{p}T^\ast CS)$. Now $\eta^\ast\mathcal{E}^{1,0}_\mathfrak{r}$ is a sub-sheaf of $\mathcal{E}_\mathfrak{q}^{1,0}=\mathcal{E}_\mathfrak{q}(T^\ast CS)$ and since $\ker(T\eta)=E^{CS}$, it is contained in~$\mathcal{E}_\mathfrak{q}(E^{CS\bot})$. Using that $\ker(T\eta)=E^{CS}$ again, it is easy to see that the map $\eta^\ast\mathcal{E}^{1,0}_\mathfrak{r}\rightarrow\mathcal{E}_\mathfrak{q}(E^{CS\bot})$ induces an isomorphism of stalks at any point. Hence, $\eta^\ast\mathcal{E}^{1,0}_\mathfrak{r}\cong\mathcal{E}_\mathfrak{q}(E^{CS\bot})$ and the proof of the f\/irst claim is complete. The second claim is clear.

(ii) It is clear that $\partial_\eta$ is $\mathbb{C}$-linear. It is $\mathrm{G}$-invariant as $\partial$ commutes with the pullback of any holomorphic map and since $\eta$ is $\mathrm{G}$-equivariant. As we already observed above that $\partial_\eta f(x)$ depends only on $\mathfrak{j}^1_xf$ when $x\in\mathcal{Y}$, the $\mathrm{G}$-invariance of $\partial_\eta$ shows that the same holds on $CS$ and thus, $\partial_\eta$ is a dif\/ferential operator of weighted order one. It remains to check the exactness of the complex and using the $\mathrm{G}$-invariance, it is enough to do this at~$x\in\mathcal{Y}$.
By Lemma~\ref{lemma projection eta}, $\mathcal{Y}$ is biholomorphic to $\mathbb{C}^\ell$ where $\ell={m\choose2}+nk$. Hence, we can view the standard coordinates $w_1,\dots,w_\ell$ on $\mathbb{C}^\ell$ as coordinates on $\mathcal{Y}$. If $J=(j_1,\dots,j_q)$ where $j_1,\dots,j_q\in\{1,\dots,\ell\}$, then we put $d\bar w_J=d\bar w_{j_1}\wedge\cdots\wedge d\bar w_{j_q}$ and $|J|=q$. Let $\omega=\sideset{}{'}\sum\limits_{|I|=p}dz_I\wedge\omega_I\in\mathcal{E}^{p,q}_\eta(\mathcal{Y})$ be the relative form as in~(\ref{relative form in coordinates}). Then there are unique functions $f_{I,J}\in\mathcal{E}_\mathfrak{q}(\mathcal{Y})$ so that $\omega_I=\sideset{}{'}\sum\limits_{|J|=q}f_{I,J}d\bar w_J$. Assume that $\partial_\eta\omega=0$ on some open neighborhood $\mathcal{U}_x$ of $x$. This is equivalent to saying that for each increasing multi-index $J\colon \partial_\eta\sigma_J=0$ on $\mathcal{U}_x$ where $\sigma_J:=\sideset{}{'}\sum_{I}f_{I,J}dz_I$.
Now using the same arguments as in the proof of the Dolbeault lemma, see \cite[Theorem~2.3.3]{H}, we can for each $J$ f\/ind a $(p-1,0)$-form $\phi_J$ such that $\partial_\eta\phi_J=\sigma_J$ on some open neighborhood of~$x$. Then $\partial_\eta\big(\sideset{}{'}\sum_J\phi_J\wedge d\bar w _J\big)=\sideset{}{'}\sum_J\sigma_J\wedge d\bar w_J=\omega$ on some open neighborhood of~$x$ and the proof is complete.

(iii) This follows from $[\bar\partial,\partial_\eta]=0$ and the observations made above.
\end{proof}

\subsection{Relative de Rham complex}\label{section relative de Rham}
 By def\/inition, $\Omega^\ast_\eta:=\mathcal{E}^{\ast,0}_\eta\cap\ker\bar\partial$ is a~sheaf of holomorphic sections. Since $[\bar\partial,\partial_\eta]=0$, there is a~complex of sheaves $(\Omega^\ast_\eta,\partial_\eta)$ and we call it the \textit{relative de Rham complex}.
\begin{Proposition}\label{thm relative de Rham}\quad
\begin{enumerate}\itemsep=0pt
 \item [$(i)$] The relative de Rham complex is an exact sequence of sheaves which resolves the sheaf $\eta^{-1}\mathcal{O}_\mathfrak{r}$.
\item [$(ii)$] The relative de Rham complex induces for each $r:=\ell+j\ge0$ a long exact sequence of vector bundles
\begin{gather}
\mathfrak{gr}^{\ell+j}\xrightarrow{\mathfrak{gr} \partial_\eta}E^{CS\ast}\otimes\mathfrak{gr}^{\ell+j-1} \rightarrow\cdots\nonumber\\
\hphantom{\mathfrak{gr}^{\ell+j}}{} \rightarrow\Lambda^jE^{CS\ast}\otimes\mathfrak{gr}^{\ell}  \xrightarrow{\mathfrak{gr}\partial_\eta}\Lambda^{j+1}E^{CS\ast}\otimes\mathfrak{gr}^{\ell+j-1} \rightarrow\cdots. \label{first exact formal complex of relative de Rham}
\end{gather}
Let $s_0>0$, $s_1,s_2,s_3\ge0$ be integers such that $s_0+s_1+2s_2+3s_3=r$. Then the sequence~\eqref{first exact formal complex of relative de Rham} contains a long exact subsequence
\begin{gather}\label{second exact formal complex of relative de Rham}
 0\rightarrow {S}^{s_0}E^{CS\ast}\otimes S^{s_1,s_2,s_3}\rightarrow E^{CS\ast}\otimes {S}^{s_0-1}E^{CS\ast}\otimes S^{s_1,s_2,s_3}\rightarrow\cdots \\
\hphantom{0} \rightarrow\Lambda^jE^{CS\ast}\otimes {S}^{s_0-j}E^{CS\ast}\otimes S^{s_1,s_2,s_3}\!\rightarrow \Lambda^{j+1}E^{CS\ast}\otimes {S}^{s_0-j-1}E^{CS\ast}\otimes S^{s_1,s_2,s_3}\!\rightarrow\cdots,\nonumber
\end{gather}
where ${S}^{s_1,s_2,s_3}:={S}^{s_1}F^{CS\ast}\otimes {S}^{s_2}G^{CS}_2\otimes {S}^{s_3}G^{CS}_3$.
\item [$(iii)$] The kernel of the first map in \eqref{first exact formal complex of relative de Rham} is $\bigoplus_{s_1+2s_2+3s_3=r}{S}^{s_1,s_2,s_3}$.
\end{enumerate}
\end{Proposition}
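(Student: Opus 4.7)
The strategy is that $\partial_\eta$ is morally the holomorphic de~Rham differential along the fibers of $\eta$, so~(i) will come from a relative Poincar\'e lemma and~(ii) from Koszul exactness in the $E^{CS}$-direction. For~(i), on the open affine set $\mathcal{Y}$ of Lemma~\ref{lemma projection eta} the map $\eta$ is the coordinate projection $A(m,\mathbb{C})\times M(n,k,\mathbb{C})\to A(m,\mathbb{C})$, and the formula for $\partial_\eta$ in Proposition~\ref{thm double complex relative forms}(ii) identifies it with the holomorphic exterior derivative in the fiber variables $z_{\alpha i}$. A fiberwise application of the holomorphic Poincar\'e lemma, with the $A(m,\mathbb{C})$-coordinates as parameters, yields local exactness of $(\Omega^\ast_\eta,\partial_\eta)$ in positive degree. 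In degree zero, $\partial_\eta f=0$ is equivalent to $\partial f\in\eta^\ast\Omega^1_\mathfrak{r}$, which locally forces $f$ to be a pullback along $\eta$; hence $\ker(\partial_\eta)=\eta^{-1}\mathcal{O}_\mathfrak{r}$, and the $\mathrm{G}$-equivariance of $\eta$ and $\partial_\eta$ extends the conclusion to all of~$CS$.

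For~(ii) I would argue as follows. Since $\partial_\eta$ is a $\mathrm{G}$-invariant linear differential operator of weighted order one between $\mathrm{G}$-homogeneous bundles, passage to graded weighted jets produces a complex of vector bundle maps whose restriction to~$x_0$ is precisely~\eqref{first exact formal complex of relative de Rham}, and by $\mathrm{G}$-invariance it suffices to check exactness at~$x_0$. The $|3|$-grading of $\mathfrak{q}$ together with the $\mathfrak{q}_0$-decomposition $\mathfrak{q}_1\cong\mathbb{E}^\ast\oplus\mathbb{F}^\ast$ and the $CS$-analog of~\eqref{weighted jets at x_0} give
\begin{gather*}
\mathfrak{gr}^r_{x_0}\cong\bigoplus_{s_0+s_1+2s_2+3s_3=r}S^{s_0}E^{CS\ast}_{x_0}\otimes S^{s_1,s_2,s_3}.
\end{gather*}
The coordinate formula $\partial_\eta f=\sum_{(\alpha,i)}(\partial f/\partial z_{\alpha i})\,dz_{\alpha i}$ then shows that $\mathfrak{gr}\,\partial_\eta$ preserves the triple $(s_1,s_2,s_3)$ and acts on the $S^{s_0}E^{CS\ast}$-factor as the standard Koszul differential $S^{s_0}E^{CS\ast}\to E^{CS\ast}\otimes S^{s_0-1}E^{CS\ast}$, so~\eqref{first exact formal complex of relative de Rham} decomposes as the direct sum, over $(s_1,s_2,s_3)$, of the subcomplexes~\eqref{second exact formal complex of relative de Rham}. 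Each such subcomplex is the Koszul complex on $E^{CS\ast}_{x_0}$ tensored with $S^{s_1,s_2,s_3}$ and is exact for $s_0>0$ by the classical Koszul lemma, which proves both~(ii) and, by inspecting the $s_0=0$ summands, the kernel description~(iii): the Koszul subcomplexes with $s_0>0$ begin injectively and contribute nothing, whereas those with $s_0=0$ collapse to a single copy of $S^{s_1,s_2,s_3}$ lying entirely in the kernel, and summing over $s_1+2s_2+3s_3=r$ yields the claim.

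The main technical step is the identification in~(ii) of the induced graded operator $\mathfrak{gr}\,\partial_\eta$, under the weighted-jet splitting of $\mathfrak{gr}^r_{x_0}$, with the Koszul/de~Rham differential in the $E^{CS\ast}$-variables; this requires carefully unwinding the weighted-jet formalism and verifying that $\partial_\eta$ differentiates only in the $E^{CS}$-direction. Once this identification is in place, Koszul exactness delivers both~(ii) and~(iii), and~(i) is a routine fiberwise Poincar\'e lemma.
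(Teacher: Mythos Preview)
Your argument is correct and follows essentially the same line as the paper's. For~(ii) and~(iii) the two proofs are identical in substance: the paper invokes the Spencer complex where you say Koszul, but this is the same exactness statement, and both of you obtain~\eqref{first exact formal complex of relative de Rham} as the direct sum of the subcomplexes~\eqref{second exact formal complex of relative de Rham} over all admissible $(s_0,s_1,s_2,s_3)$.

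The one genuine difference is in~(i). You prove exactness of $(\Omega^\ast_\eta,\partial_\eta)$ directly by a fiberwise holomorphic Poincar\'e lemma in the coordinates of Lemma~\ref{lemma projection eta}. The paper instead uses the double complex $(\mathcal{E}^{p,q}_\eta,\partial_\eta,(-1)^p\bar\partial)$ from Proposition~\ref{thm double complex relative forms}: since both rows and columns are exact (the rows by the smooth relative Poincar\'e lemma already proved there, the columns by Dolbeault), a diagram chase yields exactness of the holomorphic subcomplex $\Omega^\ast_\eta=\mathcal{E}^{\ast,0}_\eta\cap\ker\bar\partial$. Your route is more self-contained; the paper's route reuses machinery it has already set up and avoids redoing the Poincar\'e argument in the holomorphic category. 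Either is fine.
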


\begin{proof} (i) Since $[\partial_\eta,\bar\partial]=0$, the relative de Rham complex is a sub-complex of the zero-th row $(\mathcal{E}_\mathfrak{q}^{\ast,0},\partial_\eta)$ of the double complex from Proposition~\ref{thm double complex I}. By diagram chasing and using the exactness of columns and rows in the double complex, one easily proves the exactness of the relative de Rham complex. By~(\ref{properties of relative dbar}), it easily follows that $\eta^{-1}\mathcal{O}_\mathfrak{r}=\mathcal{E}^{0,0}_\eta\cap \ker(\partial_\eta)\cap\ker(\bar\partial)$.

(ii) The standard de Rham complex induces the Spencer complex (see \cite{Sp}) which is known to be exact. As the complex $(\Omega^\ast_\eta,\partial_\eta)$ is just a relative version of the (holomorphic) de Rham complex and $\partial_\eta$ satisf\/ies the usual properties of $\partial$, it is clear that the relative de Rham complex induces for each ${s_0}>0$, $s_1$, $s_2$ and $s_3$ the long exact sequence~(\ref{second exact formal complex of relative de Rham}).
The sequence~(\ref{first exact formal complex of relative de Rham}) is the direct sum of all such sequences as ${s_0}$, $s_1$, $s_2,$ and $s_3$ ranges over all quadruples of non-negative integers satisfying $r={s_0}+s_1+2s_2+3s_3$.

(iii) This readily follows from the part (ii).
\end{proof}

\subsection{Twisted relative de Rham complex}\label{section relative twisted double complex}
The weight $\lambda:=(1-2n)\omega_m$ is $\mathfrak{g}$-integral and $\mathfrak{r}$-dominant. Hence, there is an irreducible $\mathrm{R}$-module $\mathbb{W}_\lambda$ with lowest weight $-\lambda$. Since $\mathfrak{r}$ is associated to $\{\alpha_m\}$, it follows that $\dim\mathbb{W}_\lambda=1$ and so $\mathbb{W}_\lambda$ is also an irreducible $\mathrm{Q}$-module. We will denote by $\mathcal{E}_\mathfrak{q}(\lambda)$ and $\mathcal{O}_\mathfrak{q}(\lambda)$ the sheaves of smooth and holomorphic sections of $W_\lambda^{CS}:=\mathrm{G}\times_\mathrm{Q}\mathbb{W}_\lambda$, respectively.
If $W$ is a vector bundle over $CS$, then we denote $W(\lambda):=W\otimes W_\lambda^{CS}$, i.e., we twist $W$ by tensoring with the line bundle~$W_\lambda^{CS}$. It is not hard to see that $\eta^\ast\mathcal{E}_\mathfrak{r}(\lambda)\cong\mathcal{E}_\mathfrak{q}(\lambda)$ and $\eta^\ast\mathcal{O}_\mathfrak{r}(\lambda)=\mathcal{O}_\mathfrak{q}(\lambda)$
where we denote by the subscript $\mathfrak{r}$ the corresponding sheaves over~$TS$.

We call $\mathcal{E}^{p,q}_\eta(\lambda):=\mathcal{E}^{p,q}_\eta\otimes_{\eta^{-1}\mathcal{E}_\mathfrak{r}}\eta^{-1}\mathcal{E}_\mathfrak{r}(\lambda)$ the \textit{sheaf of twisted relative $(p,q)$-forms}. Consider the following sequence of canonical isomorphisms:
\begin{gather}
 \mathcal{E}^{p,q}_\eta(\lambda)\rightarrow\mathcal{E}^{p,q}_\eta\otimes_{\mathcal{E}_\mathfrak{q}} \mathcal{E}_\mathfrak{q}\otimes_{\eta^{-1}\mathcal{E}_\mathfrak{r}}\eta^{-1}\mathcal{E}_\mathfrak{r} (\lambda)\rightarrow\mathcal{E}^{p,q}_\eta\otimes_{\mathcal{E}_\mathfrak{q}}\eta^\ast\mathcal{E}_\mathfrak{r}(\lambda)\nonumber\\
\hphantom{\mathcal{E}^{p,q}_\eta(\lambda)}{} \rightarrow\mathcal{E}^{p,q}_\eta\otimes_{\mathcal{E}_\mathfrak{q}}\mathcal{E}_\mathfrak{q}(\lambda) \rightarrow\mathcal{E}^{0,q}_\mathfrak{q}\otimes_{\mathcal{E}_\mathfrak{q}}\big(\mathcal{E}^{p,0}_\eta \otimes_{\mathcal{E}_\mathfrak{q}}\mathcal{E}_\mathfrak{q}(\lambda)\big)
\rightarrow\mathcal{E}^{0,q}_\mathfrak{q}\big(\Lambda^pE^{CS\ast}(\lambda)\big).\label{formula18-18}
\end{gather}
We see that $\mathcal{E}^{p,q}_\eta(\lambda)$ is isomorphic to the sheaf of smooth $(0,q)$-forms with values in $\Lambda^pE^{CS\ast}(\lambda)$. Hence, the Dolbeault dif\/ferential induces a dif\/ferential $\bar\partial\colon \mathcal{E}^{p,q}_\eta(\lambda)\rightarrow\mathcal{E}^{p,q+1}_\eta(\lambda)$ and a complex $(\mathcal{E}^{p,\ast}_\eta(\lambda),\bar\partial)$.

A section of $\mathcal{E}^{p,q}_\eta(\lambda)$ is by def\/inition a f\/inite sum of decomposable elements $\omega\otimes v$ where $\omega$ and $v$ are sections of $\mathcal{E}^{p,q}_\eta$ and $\eta^{-1}\mathcal{E}_\mathfrak{r}(\lambda)$, respectively. As any section of $\eta^{-1}\mathcal{E}_\mathfrak{r}$ as well as transition functions between sections of $\eta^{-1}\mathcal{E}_\mathfrak{r}(\lambda)$ belong to $\ker(\partial_\eta)$, it follows that there is a unique linear dif\/ferential operator
\begin{gather*}\label{twisted relative de Rham operator}
\mathcal{E}^{p,q}_\eta(\lambda)\rightarrow\mathcal{E}^{p+1,q}_\eta(\lambda),
\end{gather*}
which satisf\/ies $\omega\otimes v\mapsto\partial_\eta\omega\otimes v$. We denote the operator also by $\partial_\eta$ as there is no risk of confusion. It is clear that $\partial_\eta$ is a linear $\mathrm{G}$-invariant dif\/ferential operator of weighted order one.

\begin{Proposition}\label{thm double complex I}
Let $p,q\ge0$ be integers.
\begin{enumerate}\itemsep=0pt
 \item[$(i)$] The sequence of sheaves $(\mathcal{E}^{(p,\ast)}_\eta(\lambda),\bar\partial)$ is exact.
\item [$(ii)$] The sequence of sheaves $(\mathcal{E}^{(\ast,q)}_\eta(\lambda),\partial_\eta)$ is exact.
\item [$(iii)$] There is a double complex $(\mathcal{E}_\eta^{p,q}(\lambda),\partial_\eta,(-1)^p\bar{\partial})$ of fine sheaves with exact rows and columns.
 \end{enumerate}
\end{Proposition}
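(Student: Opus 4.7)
The plan is to reduce every statement to the untwisted analogue, Proposition \ref{thm double complex relative forms}, by exploiting that $W_\lambda^{CS}$ is an $\eta$-pullback of a line bundle on $TS$, so in particular the twist is fibrewise trivial along $\eta$ and the operator $\partial_\eta$, which differentiates only in the fibre direction, does not feel it.

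For (i), I would simply invoke the chain of canonical isomorphisms (\ref{formula18-18}) already recorded in the excerpt, which identifies $\mathcal{E}^{p,q}_\eta(\lambda)$ with $\mathcal{E}^{0,q}_\mathfrak{q}(\Lambda^{p}E^{CS\ast}(\lambda))$. Under this identification the induced differential is the usual Dolbeault operator twisted by the holomorphic vector bundle $\Lambda^{p}E^{CS\ast}(\lambda)$, and the standard Dolbeault lemma gives a fine resolution of $\mathcal{O}_\mathfrak{q}(\Lambda^{p}E^{CS\ast}(\lambda))$. This is literally the argument used for the first part of Proposition \ref{thm double complex relative forms}, run verbatim after tensoring with $W_\lambda^{CS}$.

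For (ii), I would work locally. Exactness is a local property, so choose a cover of $CS$ by open sets $\mathcal{U}$ on which $\eta^{-1}\mathcal{E}_\mathfrak{r}(\lambda)$ admits a nowhere-vanishing smooth trivializing section $v$; such a cover exists because $W_\lambda^{TS}$ is a smooth complex line bundle on $TS$ and $\eta^{-1}\mathcal{E}_\mathfrak{r}(\lambda)$ is by definition the inverse image of its sheaf of sections. Since the sheaf $\eta^{-1}\mathcal{E}_\mathfrak{r}$ consists of functions that are locally constant along the fibres of $\eta$, such $v$ lies in $\ker(\partial_\eta)$; this is precisely what makes the prescription $\partial_\eta(\omega\otimes v)=\partial_\eta\omega\otimes v$ well-defined on the quotient $\mathcal{E}^{p,q}_\eta\otimes_{\eta^{-1}\mathcal{E}_\mathfrak{r}}\eta^{-1}\mathcal{E}_\mathfrak{r}(\lambda)$. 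Writing every section over $\mathcal{U}$ uniquely as $\omega\otimes v$, the twisted row complex on $\mathcal{U}$ becomes isomorphic to the untwisted row complex $(\mathcal{E}^{\ast,q}_\eta|_\mathcal{U},\partial_\eta)$, whose exactness is Proposition \ref{thm double complex relative forms}(ii).

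For (iii), fineness of $\mathcal{E}^{p,q}_\eta(\lambda)$ is automatic since it is a module over $\mathcal{E}_\mathfrak{q}$, which has smooth partitions of unity. The identity $[\bar\partial,\partial_\eta]=0$ likewise transfers from the untwisted case: on a decomposition $\omega\otimes v$ with $v$ as above, both operators commute with tensoring by $v$ up to the standard sign rules, and one can either verify this by a direct Leibniz computation using $\partial_\eta v=0$ and the fact that $\bar\partial v$ again lies in $\eta^{-1}\mathcal{E}^{0,1}_\mathfrak{r}(\lambda)$ (hence is $\partial_\eta$-closed), or deduce it from the double-complex structure of Proposition \ref{thm double complex relative forms}(iii) transported through the local isomorphism. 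With rows exact by (ii) and columns exact by (i), the double complex is as claimed. The only slightly delicate point in the entire argument is this local well-definedness of $\partial_\eta$ on the tensor product, and it is handled by the observation that pullbacks from $TS$ are annihilated by $\partial_\eta$; everything else is formal transport across (\ref{formula18-18}).
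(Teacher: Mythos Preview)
Your proposal is correct and follows essentially the same approach as the paper: reduce everything to the untwisted Proposition~\ref{thm double complex relative forms} via the identification~(\ref{formula18-18}) and a local trivialization of the pulled-back line bundle. The only small difference is in part~(iii): the paper picks the local section $v$ to be \emph{holomorphic} (i.e., from $\eta^{-1}\mathcal{O}_\mathfrak{r}(\lambda)$) rather than merely smooth, so that $\bar\partial v=0$ and the identity $[\bar\partial,\partial_\eta]=0$ transfers from the untwisted case without any Leibniz bookkeeping involving $\bar\partial v$; your smooth-$v$ route works too but is slightly less direct.
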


\begin{proof}
(i) By construction, the sequence is a Dolbeault complex and the claim follows.

(ii) The exactness follows immediately from Proposition \ref{thm double complex relative forms}(ii).

(iii) We need to verify that $[\bar\partial,\partial_\eta]=0$. To see this, notice that a section of $\mathcal{E}^{p,q}_\eta(\lambda)$ can be locally written as a f\/inite sum of elements as above with $v$ holomorphic. The claim then easily follows from Proposition~\ref{thm double complex relative forms}(iii).
\end{proof}

Put $\Omega^{\ast}_\eta(\lambda):=\mathcal{E}^{\ast,0}_\eta(\lambda)\cap\ker(\bar\partial)$. The complex $(\mathcal{E}^{\ast,0}_\eta(\lambda),\partial_\eta)$ contains a sub-complex $(\Omega^\ast_\eta(\lambda),\partial_\eta)$ which we call the \textit{twisted relative de Rham complex}. As in Proposition~\ref{thm relative de Rham}, one can easily see that $(\Omega_\eta^\ast(\lambda),\partial_\eta)$ is an exact sequence of sheaves of holomorhic sections. Following the proof of Proposition~\ref{thm relative de Rham}, we obtain the following:

\begin{Proposition}\label{thm relative twisted de Rham}
The relative de Rham complex $(\Omega_\eta^\ast(\lambda),\partial_\eta)$ induces for each $r\ge0$ a long exact sequence of vector bundles
\begin{gather}\label{first exact formal complex of twisted relative de Rham}
\big(\Lambda^\bullet E^{CS\ast}\otimes\mathfrak{gr}^{r-\bullet}(\lambda),\mathfrak{gr}\partial_\eta\big).
\end{gather}
 Let $s_0>0$, $s_1,s_2,s_3\ge0$ be integers such that $s_0+s_1+2s_2+3s_3=r$. Then the sequence~\eqref{first exact formal complex of twisted relative de Rham} contains a long exact subsequence
\begin{gather}\label{second exact formal complex of twisted relative de Rham}
\big(\Lambda^\bullet E^{CS\ast}\otimes {S}^{s_0-\bullet}E^{CS\ast}\otimes {S}^{s_1,s_2,s_3}(\lambda),\mathfrak{gr}\partial_\eta\big),
\end{gather}
where ${S}^{s_1,s_2,s_3}$ is defined in Proposition~{\rm \ref{thm relative de Rham}}.
The kernel of the first map in~\eqref{first exact formal complex of twisted relative de Rham} is the bundle $\bigoplus_{s_1+2s_2+3s_3=r}{S}^{s_1,s_2,s_3}(\lambda)$.
\end{Proposition}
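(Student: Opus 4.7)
The plan is to mimic the proof of Proposition \ref{thm relative de Rham} almost verbatim, exploiting the fact that twisting by the one-dimensional $\mathrm{R}$-module $\mathbb{W}_\lambda$ is harmless at the graded level. The essential point is the identity $\eta^\ast\mathcal{O}_\mathfrak{r}(\lambda)=\mathcal{O}_\mathfrak{q}(\lambda)$ noted in Section~\ref{section relative twisted double complex}, which means local holomorphic frames of $W^{CS}_\lambda$ are pullbacks of holomorphic functions from $TS$. By the very definition of the relative differential, such pullbacks lie in $\ker\partial_\eta$. Consequently, for any local holomorphic frame $v$ of $W^{CS}_\lambda$, the operator $\partial_\eta$ on $\mathcal{E}^{p,q}_\eta(\lambda)$ acts locally by the Leibniz-like rule $\omega\otimes v\mapsto(\partial_\eta\omega)\otimes v$, and is therefore $W^{CS}_\lambda$-linear up to terms that vanish upon passing to the associated graded.

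First I would prove that the twisted relative de Rham complex $(\Omega^\ast_\eta(\lambda),\partial_\eta)$ is an exact sequence of sheaves resolving $\eta^{-1}\mathcal{O}_\mathfrak{r}(\lambda)$. The argument is identical to Proposition~\ref{thm relative de Rham}(i): use the twisted double complex $(\mathcal{E}^{p,q}_\eta(\lambda),\partial_\eta,(-1)^p\bar\partial)$ supplied by Proposition~\ref{thm double complex I}, and diagram-chase through its exact rows and columns to extract exactness of the $\bar\partial$-closed subcomplex. This parallels the fact that the usual holomorphic de Rham complex arises as the $\bar\partial$-kernel of the Dolbeault double complex.

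Next I would pass to the graded level. The canonical isomorphism $\mathfrak{gr}^i_x V_\lambda\cong\mathfrak{gr}^i_x\otimes(W^{CS}_\lambda)_x$ from Section~\ref{section wdo}, together with the Leibniz observation above, identifies the graded twisted complex (\ref{first exact formal complex of twisted relative de Rham}) with the untwisted graded complex (\ref{first exact formal complex of relative de Rham}) tensored fiberwise with the line bundle $W^{CS}_\lambda$. Since tensoring an exact sequence of vector bundles with a line bundle preserves exactness, the exactness of (\ref{first exact formal complex of twisted relative de Rham}) follows from Proposition~\ref{thm relative de Rham}(ii). The same tensoring argument extracts the long exact subsequence (\ref{second exact formal complex of twisted relative de Rham}) from its untwisted counterpart, and produces the kernel description $\bigoplus_{s_1+2s_2+3s_3=r}S^{s_1,s_2,s_3}(\lambda)$ from Proposition~\ref{thm relative de Rham}(iii).

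I do not foresee a genuine obstacle since all of the substantive content already lives in Proposition~\ref{thm relative de Rham}. The only delicate step is verifying that the graded symbol $\mathfrak{gr}\partial_\eta$ genuinely commutes with the line-bundle twist; this reduces to checking that local trivializations of $W^{CS}_\lambda$ can be chosen from $\ker\partial_\eta$, which is exactly what $\eta^\ast\mathcal{O}_\mathfrak{r}(\lambda)=\mathcal{O}_\mathfrak{q}(\lambda)$ provides. Once this is in hand, the exactness statements reduce mechanically to their untwisted analogues.
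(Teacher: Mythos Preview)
Your proposal is correct and matches the paper's own approach: the paper does not give a separate proof but simply writes ``Following the proof of Proposition~\ref{thm relative de Rham}, we obtain the following,'' leaving the reader to carry out exactly the reduction you describe. Your additional remark that local holomorphic frames of $W^{CS}_\lambda$ lie in $\ker\partial_\eta$ (because $\eta^\ast\mathcal{O}_\mathfrak{r}(\lambda)=\mathcal{O}_\mathfrak{q}(\lambda)$) is precisely the mechanism that makes the twist harmless at the graded level, and it is good that you made this explicit.
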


\subsection{Relative BGG sequence}\label{section relative BGG sequence}
We know that $\Omega^p_\eta(\lambda)$ is isomorphic to the sheaf of holomorphic sections of $\Lambda^pE^{CS\ast}(\lambda)=\mathrm{G}\times_\mathrm{Q}(\Lambda^p\mathbb{E}^{\ast}\otimes\mathbb{W}_\lambda)$. The $\mathrm{Q}$-module $\Lambda^p\mathbb{E}^{\ast}$ is not irreducible. Decomposing this module into irreducible $\mathrm{Q}$-modules, we obtain from the relative twisted de Rham complex a relative BGG sequence and this will be crucial in the construction of the $k$-Dirac complexes.
We will use notation from Section~\ref{section the relative Weyl group}.

\begin{Proposition}\label{thm relative bgg sequence}
Let $a\in\mathbb{N}^{k,n}_{++}$ and $w_a\in W^\mathfrak{q}_\mathfrak{r}$ be as in Section~{\rm \ref{section the relative Weyl group}}. Then
\begin{gather}\label{decomposition of skew symmetric powers}
\Lambda^p\mathbb{E}^\ast\otimes\mathbb{W}_\lambda= \!\!\bigoplus_{a\in\mathbb{N}^{k,n}_{++}:|a|=p} \!\!\mathbb{W}_{\lambda_a}\qquad \text{and  thus}\qquad  \Omega^{p}_\eta(\lambda)= \!\!\bigoplus_{a\in\mathbb{N}^{n,k}_{++}:|a|=p} \!\!\mathcal{O}_\mathfrak{q}(\lambda_a),
\end{gather}
where $\mathbb{W}_{\lambda_a}$ is an irreducible $\mathrm{Q}$-module with lowest weight $-\lambda_a:=-w_a.\lambda$.

There is a linear $\mathrm{G}$-invariant differential operator
\begin{gather}\label{dif op in BGG}
\partial_{a'}^a\colon \ \mathcal{O}_\mathfrak{q}(\lambda_a)\rightarrow\Omega^p_\eta(\lambda)\xrightarrow{\partial_\eta}\Omega^p_\eta(\lambda)\rightarrow\mathcal{O}_\mathfrak{q}(\lambda_{a'})
\end{gather}
where the first map is the canonical inclusion and the last map is the canonical projection. If $a\nless a'$, then $\partial_{a'}^a=0$.
 \end{Proposition}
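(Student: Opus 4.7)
My plan is to~(i) decompose the fibre $\Lambda^p\mathbb{E}^\ast\otimes\mathbb{W}_\lambda$ into $\mathrm{Q}$-irreducibles, (ii) identify each summand with $\mathbb{W}_{w_a.\lambda}$ by a direct $\mathfrak{h}$-weight computation, and (iii) deduce $\partial^a_{a'}=0$ when $a\nless a'$ by inspecting the principal symbol of $\partial^a_{a'}$.

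For step~(i), the key observation is that $\mathfrak{q}_+$ acts trivially on $\mathbb{E}\cong\mathfrak{r}/\mathfrak{q}$: picking representatives in $\mathfrak{q}_{-1}\cap\mathfrak{r}$, for any $x\in\mathfrak{q}_i$ with $i\ge1$ one has $[x,\mathfrak{q}_{-1}]\subset\mathfrak{q}_{i-1}\subset\mathfrak{q}$, so the induced action on the quotient vanishes; triviality of $\mathfrak{q}_+$ on the one-dimensional $\mathbb{W}_\lambda$ is automatic. Decomposing $\Lambda^p\mathbb{E}^\ast\otimes\mathbb{W}_\lambda$ as a $\mathrm{Q}$-module therefore reduces to decomposing it as a $\mathfrak{q}_0$-module, and the dual Cauchy identity applied to $\mathbb{E}^\ast=\mathbb{C}^k\otimes\mathbb{C}^{n\ast}$ gives
\begin{gather*}
\Lambda^p\mathbb{E}^\ast=\bigoplus_{a\in\mathbb{N}^{k,n}_{++},\,|a|=p}S_a\mathbb{C}^k\otimes S_b\mathbb{C}^{n\ast},
\end{gather*}
where $b$ is the partition conjugate to~$a$ and the size bounds $\ell(a)\le k$, $\ell(b)\le n$ are exactly the condition $a\in\mathbb{N}^{k,n}_{++}$.

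For step~(ii), I would use the fact that $\mathbb{C}^k$ has weights $\epsilon_1,\dots,\epsilon_k$, that $\mathbb{C}^{n\ast}$ has weights $-\epsilon_{k+1},\dots,-\epsilon_m$, and that $\mathbb{W}_\lambda$ has lowest weight $-\lambda$. A short bookkeeping gives the negative of the lowest weight of the $a$-summand as $\lambda+(-a_k,\dots,-a_1\mid b_1,\dots,b_n)$, which by formula~\eqref{action on lowest form} together with $w_a\lambda=\lambda$ (because $w_a\in S_m$ fixes the scalar weight $\lambda=(1-2n)\omega_m$) equals $w_a.\lambda=\lambda_a$. This identifies the summand with the irreducible $\mathrm{Q}$-module $\mathbb{W}_{\lambda_a}$ claimed in~\eqref{decomposition of skew symmetric powers}.

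For step~(iii), the component $\partial^a_{a'}$ has weighted order at most one, and since $\partial_\eta$ differentiates only in the $E^{CS}$-direction its principal symbol is a $\mathrm{Q}$-equivariant map $\sigma\colon\mathbb{E}^\ast\otimes\mathbb{W}_{\lambda_a}\to\mathbb{W}_{\lambda_{a'}}$. Writing $\mathbb{E}^\ast\otimes\mathbb{W}_{\lambda_a}=(\mathbb{C}^k\otimes S_a\mathbb{C}^k)\otimes(\mathbb{C}^{n\ast}\otimes S_b\mathbb{C}^{n\ast})\otimes\mathbb{W}_\lambda$ and applying Pieri's rule to each pair, the summands of the form $\mathbb{W}_{\lambda_c}$ are exactly those obtained by simultaneously adding a box to~$a$ and the conjugate box to~$b$, i.e.\ by a covering relation $a<c$ with $|c|=|a|+1$; by Lemma~\ref{lemma relative Weyl group} these match the arrows $w_a\to w_c$ in $W^\mathfrak{q}_\mathfrak{r}$. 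Hence if $a\nless a'$ then $\mathbb{W}_{\lambda_{a'}}$ does not appear in $\mathbb{E}^\ast\otimes\mathbb{W}_{\lambda_a}$, the symbol $\sigma$ vanishes by Schur's lemma, and the residual zeroth-order $\mathrm{G}$-invariant operator between the non-isomorphic irreducible bundles is zero. I expect step~(ii) to be the most fiddly in practice because it requires carefully lining up the lowest-versus-highest-weight conventions and the conjugation $a\leftrightarrow b$ of partitions; step~(i) is essentially the dual Cauchy identity and step~(iii) is a symbol-plus-Pieri argument.
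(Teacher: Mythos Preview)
Your argument is correct and follows a genuinely different route from the paper's. For the decomposition~\eqref{decomposition of skew symmetric powers}, the paper invokes Kostant's version of the Bott--Borel--Weil theorem applied to the pair $(\mathfrak{r}_0^{ss},\mathfrak{r}_0^{ss}\cap\mathfrak{q})$, together with the length formula $\ell(w_a)=|a|$ from Lemma~\ref{lemma length of permutation}: Kostant's theorem outputs the irreducible constituents directly in the form $\mathbb{W}_{w_a.\lambda}$, so no separate weight bookkeeping is needed. For the vanishing of $\partial^a_{a'}$ when $a\nless a'$, the paper appeals to the general fact (from \cite[Section~8.7]{BE}) that the graph of a relative BGG sequence coincides with the relative Hasse diagram $W^\mathfrak{q}_\mathfrak{r}$, and then to the poset isomorphism of Lemma~\ref{lemma relative Weyl group}.

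Your approach trades this structural machinery for explicit classical representation theory: the dual Cauchy identity gives the $\mathfrak{q}_0$-decomposition of $\Lambda^p(\mathbb{C}^k\otimes\mathbb{C}^{n\ast})$ directly in terms of pairs of conjugate partitions, the weight computation identifies each piece with $\mathbb{W}_{\lambda_a}$ (your observation $w_a\lambda=\lambda$ is exactly what the paper records in Remark~\ref{remark weight}), and the symbol-plus-Pieri argument handles the vanishing without invoking the general BGG formalism. The paper's route is shorter and situates the result in its natural BGG context, which is convenient since later sections rely on precisely that framework; your route is more self-contained and makes the partition combinatorics transparent, at the cost of re-deriving by hand a special case of what Kostant's theorem and the BGG machinery deliver uniformly. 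One minor point of care in your step~(iii): the Pieri decomposition of $\mathbb{E}^\ast\otimes\mathbb{W}_{\lambda_a}$ contains many summands $S_c\mathbb{C}^k\otimes S_d\mathbb{C}^{n\ast}\otimes\mathbb{W}_\lambda$ with $d\neq c^T$, but since the target $\mathbb{W}_{\lambda_{a'}}$ has $d=c^T$ built in, only the ``conjugate-box'' summands are relevant, exactly as you say.
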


\begin{proof} Recall from Section \ref{section lie algebra} that the semi-simple part $\mathfrak{r}_0^{ss}$ of $\mathfrak{r}_0$ is isomorphic to $\mathfrak{sl}(m,\mathbb{C})$ and that $\mathfrak{r}_0^{ss}\cap\mathfrak{q}$ is a parabolic subalgebra of~$\mathfrak{r}_{0}^{ss}$.
The direct sum decomposition from~(\ref{decomposition of skew symmetric powers}) then follows at once from the Kostant's version of the Bott--Borel--Weyl theorem (see \cite[Theorem~3.3.5]{CS}) applied to $\mathbb{W}_\lambda$ and
$(\mathfrak{r}_0^{ss},\mathfrak{r}_0^{ss}\cap\mathfrak{q})$ and the identity $\ell(w_a)=|a|$ from Lemma~\ref{lemma length of permutation}.
Recall from \cite[Section~8.7]{BE} that the graph of the relative BGG sequence coincides with the relative Hasse graph~$W^\mathfrak{q}_\mathfrak{r}$. The last claim then follows from Lemma~\ref{lemma relative Weyl group}.
\end{proof}

\begin{Remark}\label{remark weight}
Let $a=(a_1,\dots,a_k)\in\mathbb{N}^{k,n}_{++}$ and $b=(b_1,\dots,b_n)\in\mathbb{N}^{n,k}_{++}$ be the conjugated partition. In order to compute $\lambda_a$ from Proposition~\ref{thm relative bgg sequence}, notice that
\begin{gather}\label{shifted lambda}
\lambda+\rho=\bigg(\frac{2k-1}{2},\dots,\frac{3}{2},\frac{1}{2}\,\bigg|\,-\frac{1}{2},-\frac{3}{2},\dots,\frac{-2n+1}{2}\bigg).
\end{gather}
Since $w_a(\omega_m)=\omega_m$, we have $w_a(\lambda)=\lambda$ and thus $\lambda_a=w_a(\lambda+\rho)-\rho=\lambda+w_a\rho-\rho$. By~(\ref{action on lowest form}), it follows that
\begin{gather*}
\lambda_a=\lambda+(-a_k,\dots,-a_1\,|\, b_1,b_2,\dots,b_n).
\end{gather*}
\end{Remark}

\subsection{Direct image of the relative BGG sequence}\label{section direct images}

Recall from \cite[Section 5.3]{BE} that given a $\mathfrak{g}$-integral and $\mathfrak{q}$-dominant weight $\nu$, there is at most one $\mathfrak{p}$-dominant weight in the $W_\mathfrak{p}^\mathfrak{q}$-orbit of $\nu$. If there is no $\mathfrak{p}$-dominant weight, then all direct images of $\mathcal{O}_\mathfrak{q}(\nu)$ vanish. If there is a $\mathfrak{p}$-dominant weight, say $\mu=w.\nu$ where $w\in W^\mathfrak{q}_\mathfrak{p}$, then $\tau^{\ell(w)}_\ast\mathcal{O}_\mathfrak{q}(\nu)\cong\mathcal{O}_\mathfrak{p}(\mu)$ is the unique non-zero direct image of $\mathcal{O}_\mathfrak{q}(\nu)$.

\begin{Proposition}\label{thm direct images}
Let $n\ge k\ge 2$ and $a=(a_1,\dots,a_k)\in\mathbb{N}^{k,n}_{++}$. Put $\mu^\pm:=\frac{1}{2}(-2n+1,\dots,-2n+1\,|\,1,\dots,1,\pm1)$ and
$\mu_a:=\mu^\ast-(a_k,\dots,a_1\,|\,0,\dots,0)$ where $\ast=+$ if $d(a)\equiv n \mod 2$ and $\ast=-$ otherwise.
Then
\begin{gather*}
\tau_\ast^q(\mathcal{O}_\mathfrak{q}(\lambda_a))
=
 \begin{cases}
\mathcal{O}_\mathfrak{p}(\mu_a),&\mathrm{if} \ a\in S^k,\ q=\ell(a):={n\choose2}-q(a),\\
\{0\},& \mathrm{otherwise}.
\end{cases}
\end{gather*}
\end{Proposition}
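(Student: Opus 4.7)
My plan is to apply the algorithm of \cite[Section~5.3]{BE} recalled just before the proposition: for each $a$ there is at most one element $w\in W^\mathfrak{q}_\mathfrak{p}$ with $w.\lambda_a$ being $\mathfrak{p}$-dominant, and if it exists then $\tau^{\ell(w)}_\ast\mathcal{O}_\mathfrak{q}(\lambda_a)\cong\mathcal{O}_\mathfrak{p}(w.\lambda_a)$ is the unique nonvanishing direct image. Since $W_\mathfrak{q}\cong S_k\times S_n$ and $W_\mathfrak{p}\cong S_k\times W(D_n)$, the set $W^\mathfrak{q}_\mathfrak{p}$ is identified with minimal length representatives of $S_n\backslash W(D_n)$; such a $w$ fixes the first $k$ coordinates of $\lambda_a+\rho$ and acts on the last $n$ by a signed permutation with an even number of sign changes.

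From Remark~\ref{remark weight} and~\eqref{shifted lambda} I would compute $\lambda_a+\rho$ in coordinates: its first $k$ entries are $\bigl(\tfrac{2k-1}{2}-a_k,\dots,\tfrac{1}{2}-a_1\bigr)$, already strictly decreasing because $a$ is a partition, and its last $n$ entries are the half-integers $c_i:=b_i-i+\tfrac{1}{2}$ ($b$ the partition conjugate to $a$), which form a strictly decreasing sequence with $c_i>0$ exactly for $i\le d(a)$. The required $w$ exists if and only if $\lambda_a+\rho$ is $\mathfrak{p}$-regular, which on the last $n$ coordinates amounts to the absolute values $|c_i|$ being pairwise distinct.

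The combinatorial core of the argument is the equivalence: all $|c_i|$ are pairwise distinct iff $a\in S^k$. For this I would work with the Frobenius coordinates of $b$, namely $p_i:=b_i-i$ and $q_i:=a_i-i$ for $i\le d:=d(a)=d(b)$ (strict decreasing non-negative sequences of length $d$). A direct Maya-diagram style computation gives $\{2c_i:i\le d\}=\{2p_i+1:i\le d\}$ and $\{-2c_i:d<i\le n\}=\{1,3,\dots,2n-1\}\setminus\{2q_i+1:i\le d\}$; both are subsets of $\{1,3,\dots,2n-1\}$ with complementary cardinalities, so the $|c_i|$ are pairwise distinct iff $\{2p_i+1\}=\{2q_i+1\}$, iff $p_i=q_i$ for every $i$, iff $a$ is self-conjugate.

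Assuming $a\in S^k$, this identity forces $\{|c_i|\}=\{\tfrac{1}{2},\tfrac{3}{2},\dots,\tfrac{2n-1}{2}\}$, so the $\mathfrak{p}$-dominant rearrangement of the last $n$ coordinates of $\mu_a+\rho$ is $\bigl(\tfrac{2n-1}{2},\dots,\tfrac{3}{2},\varepsilon\tfrac{1}{2}\bigr)$ for some $\varepsilon\in\{\pm1\}$; the $W(D_n)$ parity constraint on $w$ (even number of sign changes) determines $\varepsilon=+1$ iff $n-d(a)$ is even, exactly matching the $\mu^\pm$ in the statement. For the length I would use $\ell(w)=|\{\alpha>0:\langle\alpha,c\rangle<0\}|$; since the $c_i$ are strictly decreasing, only positive roots of the form $e_i+e_j$ ($i<j$) can contribute, and a direct count gives $\ell(w)=\binom{n-d}{2}+\bigl|\{(i,j):i\le d<j\le n,\,|c_i|<|c_j|\}\bigr|$, the second term evaluating (via the Frobenius description above) to $nd-\binom{d+1}{2}-q(a)$, so $\ell(w)=\binom{n}{2}-q(a)=\ell(a)$. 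The main obstacle is the Maya/Frobenius identity underlying the distinctness criterion; once it is in hand, the rest is direct combinatorial bookkeeping.
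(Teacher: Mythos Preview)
Your proposal is correct and follows the same overall strategy as the paper: reduce to the last $n$ coordinates of $\lambda_a+\rho$, test $\mathfrak{p}$-regularity via pairwise distinctness of the absolute values $|c_i|$, and then read off the length and the sign of the last coordinate. The substantive difference is in the combinatorial core. For the equivalence ``all $|c_i|$ distinct $\Leftrightarrow a\in S^k$'' you use the Frobenius/Maya identity
\[
\{a_j-j:\,j\le d\}\ \sqcup\ \{i-b_i-1:\,d<i\le n\}\ =\ \{0,1,\dots,n-1\},
\]
which is self-contained and makes transparent that the positive half-integers $\{b_i-i+\tfrac12:i\le d\}$ and the negated negative ones are disjoint precisely when the arm- and leg-lengths agree. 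The paper instead invokes its earlier Lemma~\ref{lemma symmetric YD} on $k$-balanced permutations to produce, for $a\notin S^k$, an explicit collision $c_i=c_j$; this reuses machinery already set up in Section~\ref{section the relative Weyl group} but is less transparent combinatorially. For the length, you count negative $e_i+e_j$ roots and split into $\binom{n-d}{2}$ plus a cross term; the paper instead counts inversions of the sequence of absolute values and asserts that this equals $\ell(w)$ (which is true here because the $e_i-e_j$ roots contribute nothing and the $e_i+e_j$ count coincides with that inversion number, though the paper does not spell this out). Your route is slightly more explicit on this point; the paper's is terser but leans on the same underlying identity. Either way the assumption $n\ge k$ enters in the same place: to ensure $b_i-i\le n-1$, so that both half-integer sets live inside $\{1,3,\dots,2n-1\}$.
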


\begin{proof}By def\/inition, each $w\in W_\mathfrak{p}^\mathfrak{q}$ f\/ixes the f\/irst $k$ coef\/f\/icients of $\lambda_a$ and so it is enough to look at the last $n$ coef\/f\/icients. By Remark~\ref{remark weight}, it follows
\begin{gather*}
w_a(\lambda+\rho)=\lambda_a+\rho=\bigg(\dots,i-a_i-\frac{1}{2},\dots,\frac{1}{2}-a_1\,\bigg|\, b_1-\frac{1}{2},\dots,b_j-j+\frac{1}{2},\dots\bigg),
\end{gather*}
where $b=(b_1,\dots,b_n)$ is the conjugated partition.
Put $c:=(c_1,\dots,c_n)$ where $c_j:=\big|b_j-j+\frac{1}{2}\big|$. By~(\ref{simple reflections}) and Table~\ref{table dominant weights}, if $c_i= c_j$ for some $i\ne j$, then there cannot be a~$\mathfrak{p}$-dominant weight in the $W^\mathfrak{q}_\mathfrak{p}$-orbit of~$\lambda_a$. If $a\not\in S^k$, then by Lemma~\ref{lemma symmetric YD} there is $s\in\{0,\dots,k-1\}$ such that\footnote{Notice that at this point we need that $n\ge k$.} $w_a(k-s)=k+i\ge k$ and $w_a(k+s+1)=k+j\ge k$ for some distinct positive integers~$i$ and~$j$. By~(\ref{shifted lambda}), it follows that $b_i-i+\frac{1}{2}=\frac{1}{2}+s$, $b_j-j-\frac{1}{2}=-\frac{1}{2}-s$ and thus $c_i=c_j$. Hence, all direct images of $\mathcal{O}_\mathfrak{q}(\lambda_a)$ are zero.

We may now suppose that $a\in S^k$. By the def\/inition of $d(a)$, we have $b_{d(a)}-d(a)+\frac{1}{2}>0>b_{d(a)+1}-d(a)-\frac{1}{2}$. By Lemma \ref{lemma symmetric YD} and~(\ref{shifted lambda}), it follows that $(c_1,\dots,c_n)$ is a permutation of $\big(\frac{2n-1}{2},\dots,\frac{3}{2},\frac{1}{2}\big)$. As $\lambda_a$ is $\mathfrak{q}$-dominant, we know that the sequence $\big(b_1-\frac{1}{2},\cdots,b_j-j+\frac{1}{2},\dots\big)$ is decreasing. Thus for each integer $i=1,\dots,d(a)$, the set $\{j\colon i<j\le n,\ c_i>c_j\}$ contains precisely $b_i-i$ distinct elements. Altogether, there are precisely $\sum\limits_{\ell=1}^{d(a)}(b_\ell-\ell)=q(a)$ pairs $i<j$ such that $c_i>c_j$. Equivalently, there are $\ell(a)$ pairs $i<j$ such that $c_i<c_j$. It follows that the length of the permutation that maps $(c_1,\dots,c_n)$ to $\big(\frac{2n-1}{2},\dots,\frac{3}{2},\frac{1}{2}\big)$ is precisely $\ell(a)$. Now it is easy to see (recall~(\ref{simple reflections})) that there is $w\in
W^\mathfrak{q}_\mathfrak{p}$ such that $w.\lambda_a$ is $\mathfrak{p}$-dominant and $\ell(w)=\ell(a)$. As there are $n-d(a)$ negative numbers in the sequence $\big(b_1-\frac{1}{2},\dots,b_j-j+\frac{1}{2},\dots\big)$, the last claim about the sign of the last coef\/f\/icient of $w.\lambda_a$ also follows. This completes the proof.\end{proof}

\begin{Remark}\label{remark PT for other Dirac}
In Proposition~\ref{thm direct images} we recovered the $W^\mathfrak{p}$-orbit of the singular weight $\mu^+$ if $n$ is even and of $\mu^-$ if $n$ is odd which was computed in \cite{F}. There is an automorphism of $\mathfrak{g}$ which swaps $\alpha_m$ and $\alpha_{m-1}$ and hence, it swaps also the associated parabolic subalgebras. If we cross in~(\ref{double fibration diagram I}) the simple root $\alpha_{m-1}$ instead of $\alpha_m$, take $(1-2n)\omega_{m-1}$ as $\lambda$ and follow the computations given above, we will get the~$W^\mathfrak{p}$-orbit of $\mu^+$ if~$n$ is odd and of $\mu^-$ if $n$ is even. As also all other arguments presented in this paper work for the other case, we will obtain the other ``half'' of the $k$-Dirac complex from \cite{TS} as mentioned in Introduction.
\end{Remark}

\subsection{Double complex of relative forms II}\label{section relative double complex II}
The direct sum decomposition from Proposition \ref{thm relative bgg sequence} together with the isomorphism in~\eqref{formula18-18} gives a direct sum decomposition $\mathcal{E}^{p,q}_\eta(\lambda)=\bigoplus_{a\in\mathbb{N}^{k,n}_{++}:|a|=p}\mathcal{E}^{0,q}_\mathfrak{q}(\lambda_a)$.
Let $a,a'\in\mathbb{N}^{k,n}_{++}$ be such that $p=|a|=|a'|-1$. Then there is a linear dif\/ferential operator
\begin{gather}\label{relative differential in double complex}
\partial_{a'}^a\colon \ \mathcal{E}^{0,q}_\mathfrak{q}(\lambda_a)\rightarrow\mathcal{E}^{p,q}_\eta(\lambda)\xrightarrow{\partial_\eta}\mathcal{E}^{p+1,q}_\eta(\lambda)\rightarrow\mathcal{E}^{0,q}_\mathfrak{q}(\lambda_{a'})
\end{gather}
where the f\/irst map is the canonical inclusion and the last map is the canonical projection as in (\ref{dif op in BGG}). We denote the dif\/ferential operator by $\partial_{a'}^a$ as in (\ref{dif op in BGG}) as there is no risk of confusion. Recall from Proposition \ref{thm relative bgg sequence} that $\partial^a_{a'}=0$ if $a\nless a'$.

Suppose that $\mathcal U$ is an open, contractible and Stein subset of $M$. We put $\mathcal{E}^{p,q}_\eta(\tau^{-1}(\mathcal U),\lambda):=\Gamma(\tau^{-1}(\mathcal U),\mathcal{E}^{p,q}_\eta(\lambda))$, i.e., this is the space of sections of the sheaf $\mathcal{E}^{p,q}_\eta(\lambda)$ over $\tau^{-1}(\mathcal U)$. Then there is a double complex
\begin{gather}\label{double complex II}\begin{split}
\xymatrix{&&&\\
\cdots\ar[r]& \mathcal{E}^{p,q+1}_\eta(\tau^{-1}(\mathcal U),\lambda)\ar[r]^{D''}\ar[u] &\mathcal{E}^{p+1,q+1}_\eta(\tau^{-1}(\mathcal U),\lambda) \ar[u]\ar[r]&\cdots\\
\cdots\ar[r]&\mathcal{E}^{p,q}_\eta(\tau^{-1}(\mathcal U),\lambda)\ar[r]^{D''}\ar[u]^{D'}&\mathcal{E}^{p+1,q}_\eta(\tau^{-1}(\mathcal U),\lambda)\ar[u]^{D'}\ar[r]&\cdots,\\
&\ar[u]&\ar[u]&}\end{split}
\end{gather}
where $D'=(-1)^p\bar\partial$ and $D''=\partial_\eta$. Put $T^i(\mathcal U):=\bigoplus_{p+q=i}\mathcal{E}^{p,q}_\eta(\tau^{-1}(\mathcal U),\lambda)$. We obtain three complexes $(T^{\ast}(\mathcal U),D')$, $(T^{\ast}(\mathcal U),D'')$ and $(T^\ast(\mathcal U),D'+D'')$.

\begin{Lemma}\label{lemma sections over stein set}
Let $a\in\mathbb{N}^{k,n}_{++}$ and $\mathcal U$ be the open, contractible and Stein subset of $M$ as above. Then
\begin{gather}\label{sections over stein set}
H^{q}\big(\tau^{-1}(\mathcal U),\mathcal{O}_\mathfrak{q}(\lambda_a)\big)=
\begin{cases}
\Gamma(\mathcal U,\mathcal{O}_\mathfrak{p}(\mu_a)),& a\in S^k, \ q=\ell(a),\\
\{0\}, & \mathrm{otherwise},
\end{cases}
\end{gather}
and thus also
\begin{gather*}
H^{{n\choose2}+j}(T^\ast(\mathcal U),D')=\bigoplus_{a\in S^k_j}H^{\ell(a)}\big(\tau^{-1}(\mathcal U),\mathcal{O}_\mathfrak{q}(\lambda_a)\big).
\end{gather*}
\end{Lemma}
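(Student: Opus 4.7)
\smallskip

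\noindent\textbf{Proof plan.} The approach is to combine the direct image computation of Proposition~\ref{thm direct images} with the Leray spectral sequence for $\tau$ and Cartan's Theorem~B on the Stein base $\mathcal U$, and then identify the $D'$-cohomology via the Dolbeault resolution.

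\emph{Step 1: The first formula via Leray.}  I would consider the Leray spectral sequence
\begin{gather*}
E_2^{p,q}=H^p\bigl(\mathcal U,\tau^q_\ast\mathcal{O}_\mathfrak{q}(\lambda_a)\bigr)\Rightarrow H^{p+q}\bigl(\tau^{-1}(\mathcal U),\mathcal{O}_\mathfrak{q}(\lambda_a)\bigr).
\end{gather*}
By Proposition~\ref{thm direct images}, at most one row of the $E_2$-page is non-zero: the row $q=\ell(a)$, which equals $\mathcal{O}_\mathfrak{p}(\mu_a)$ when $a\in S^k$, and is identically zero otherwise. Since $\mathcal U$ is Stein and $\mathcal{O}_\mathfrak{p}(\mu_a)$ is the sheaf of holomorphic sections of a holomorphic vector bundle (in particular a coherent analytic sheaf), Cartan's Theorem~B gives $H^p(\mathcal U,\mathcal{O}_\mathfrak{p}(\mu_a))=0$ for all $p\ge1$. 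Hence only $E_2^{0,\ell(a)}=\Gamma(\mathcal U,\mathcal{O}_\mathfrak{p}(\mu_a))$ survives, the spectral sequence degenerates trivially, and~(\ref{sections over stein set}) follows.

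\emph{Step 2: The second formula via Dolbeault.}  By the direct sum decomposition
\begin{gather*}
\mathcal{E}^{p,q}_\eta(\lambda)=\bigoplus_{a\in\mathbb{N}^{k,n}_{++},\,|a|=p}\mathcal{E}^{0,q}_\mathfrak{q}(\lambda_a),
\end{gather*}
coming from Proposition~\ref{thm relative bgg sequence} and~\eqref{formula18-18}, the differential $D'=(-1)^p\bar\partial$ respects the splitting indexed by $a$ (it acts fiberwise in the $\bar\partial$-direction without changing~$a$). Therefore
\begin{gather*}
(T^\ast(\mathcal U),D')\;=\;\bigoplus_{a\in\mathbb{N}^{k,n}_{++}}\Bigl(\mathcal{E}^{0,\ast}_\mathfrak{q}\bigl(\tau^{-1}(\mathcal U),\lambda_a\bigr),\,\bar\partial\Bigr)[|a|],
\end{gather*}
where $[|a|]$ denotes a shift placing the term in total degree $|a|+q$. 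Since the sheaves $\mathcal{E}^{0,q}_\mathfrak{q}(\lambda_a)$ are fine and $(\mathcal{E}^{0,\ast}_\mathfrak{q}(\lambda_a),\bar\partial)$ is a fine resolution of $\mathcal{O}_\mathfrak{q}(\lambda_a)$, the Dolbeault theorem identifies the $q$-th cohomology of each summand with $H^q(\tau^{-1}(\mathcal U),\mathcal{O}_\mathfrak{q}(\lambda_a))$.

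\emph{Step 3: Collecting degrees.}  Putting Steps~1 and~2 together, a summand indexed by $a$ contributes only when $a\in S^k$, in total degree $|a|+\ell(a)$. For symmetric $a$ the Young diagram gives $|a|=d(a)+2q(a)$, so
\begin{gather*}
|a|+\ell(a)=d(a)+2q(a)+\tbinom{n}{2}-q(a)=\tbinom{n}{2}+r(a),
\end{gather*}
which lands in total degree $\binom{n}{2}+j$ precisely when $a\in S^k_j$. This yields the second displayed formula.

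\emph{Main obstacle.}  None of the steps is genuinely hard once Proposition~\ref{thm direct images} is in hand; the only point to take some care with is verifying that the differential $D'$ really respects the fibrewise decomposition into $\mathcal{E}^{0,q}_\mathfrak{q}(\lambda_a)$-summands (so that no off-diagonal $\bar\partial$-terms mix different values of $a$), and that the shift bookkeeping matches the identity $|a|+\ell(a)=\binom{n}{2}+r(a)$ for symmetric partitions; this combinatorial identity is the one non-trivial checkpoint in the argument.
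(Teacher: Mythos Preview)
Your proposal is correct and follows essentially the same approach as the paper: the Leray spectral sequence combined with Proposition~\ref{thm direct images} (the paper refers to \cite{BE} rather than naming Cartan's Theorem~B explicitly, but the content is the same), then the Dolbeault isomorphism, and finally the combinatorial identity $|a|+\ell(a)=\binom{n}{2}+r(a)$ for symmetric~$a$. Your write-up is in fact more detailed than the paper's, which is quite terse.
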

\begin{proof}The f\/irst claim follows from Proposition \ref{thm direct images} and application of the Leray spectral sequence as explained in \cite{BE}. For the second claim, recall from \cite[Theorem~3.20]{W} that the sheaf cohomology is equal to the Dolbeault cohomology, i.e., there is an isomorphism
\begin{gather}\label{computing sheaf cohomology groups}
H^{q}\big(\tau^{-1}(\mathcal U),\mathcal{O}_\mathfrak{q}(\lambda_a)\big)\cong H^{q}\big(\mathcal{E}^{0,\ast}_\mathfrak{q}\big(\tau^{-1}(\mathcal U),\lambda_a\big),\bar\partial\big).
\end{gather}
The cohomology group appears on the $(|a|+\ell(a))=(d(a)+2q(a)+{n\choose2}-q(a))=({n\choose2}+r(a))$-th diagonal of the double complex. Here, see Proposition \ref{thm direct images}, we use that $\ell(a)={n\choose2}-q(a)$, the notation from~(\ref{abbreviations}) and $S^k_j=\{a\in S^k\colon r(a)=j\}$.
\end{proof}

\section[$k$-Dirac complexes]{$\boldsymbol{k}$-Dirac complexes}\label{section complex}

In Section \ref{section complex} we will give the def\/inition of dif\/ferential operators in the $k$-Dirac complexes. It will be clear from the construction that the operators are linear, local and $\mathrm{G}$-invariant. Later in Lemma~\ref{lemma diff op on graded jets} we will show that each operator is indeed a dif\/ferential operator and we give an upper bound on its weighted order. The operators naturally form a sequence
and we will prove in Theorem~\ref{theorem complex} that they form a complex which we call the $k$-Dirac complex.

Recall from Section \ref{section relative double complex II} that $\mathcal{E}^{0,q}_\mathfrak{q}(\tau^{-1}(\mathcal U),\lambda_a)$ is the space of sections of the sheaf $\mathcal{E}^{0,q}_\mathfrak{q}(\lambda_a)$ over~$\tau^{-1}(\mathcal U)$. If $\alpha\in\mathcal{E}^{0,q}_\mathfrak{q}(\tau^{-1}(\mathcal U),\lambda_a)$ is $\bar\partial$-closed, then we will denote by $[\alpha]\in H^{q}(\tau^{-1}(\mathcal U),\mathcal{O}_\mathfrak{q}(\lambda_a))$ the corresponding cohomology class.

\begin{Lemma}\label{lemma diff op}
Let $j\ge0$, $a\in S^k_j$, $a'\in S^k_{j+1}$ be such that $a<a'$ and $\mathcal{U}$ be the Stein set as above. Then there is a~linear, local and $\mathrm{G}$-invariant operator
\begin{gather*}
D_{a'}^a\colon \ \Gamma(\mathcal U,\mathcal{O}_\mathfrak{p}(\mu_a))\rightarrow\Gamma(\mathcal U,\mathcal{O}_\mathfrak{p}(\mu_{a'})).
\end{gather*}
\end{Lemma}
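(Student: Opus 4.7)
The plan is to construct $D_{a'}^a$ by a standard Penrose-type zig-zag in the double complex~\eqref{double complex II}. By Lemma~\ref{lemma sections over stein set} we have $\Gamma(\mathcal U,\mathcal O_\mathfrak p(\mu_a))\cong H^{\ell(a)}(\tau^{-1}(\mathcal U),\mathcal O_\mathfrak q(\lambda_a))$, and the right-hand side is computed by the Dolbeault complex $(\mathcal E^{0,\bullet}_\mathfrak q(\tau^{-1}(\mathcal U),\lambda_a),\bar\partial)$. Given $\varphi$, lift it to a $\bar\partial$-closed representative $\alpha_0\in\mathcal E^{0,\ell(a)}_\mathfrak q(\tau^{-1}(\mathcal U),\lambda_a)$, and view $\alpha_0$ as an element of $\mathcal E^{|a|,\ell(a)}_\eta(\tau^{-1}(\mathcal U),\lambda)$ via the direct-sum decomposition of Proposition~\ref{thm relative bgg sequence}. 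Setting $r:=|a'|-|a|$, the identity $|a|=d(a)+2q(a)$ valid for $a\in S^k$, together with $r(a')=r(a)+1$, gives $r=1+q(a')-q(a)$, hence $\ell(a')=\ell(a)-r+1$; the target bi-degree $(|a'|,\ell(a'))$ therefore lies one diagonal below $(|a|,\ell(a))$ in the double complex.

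The zig-zag proceeds inductively: assume I have constructed a $\bar\partial$-closed $\alpha_i\in\mathcal E^{|a|+i,\ell(a)-i}_\eta(\tau^{-1}(\mathcal U),\lambda)$ for some $0\le i<r-1$ in the ``non-target'' components. Form $\partial_\eta\alpha_i$ at bi-degree $(|a|+i+1,\ell(a)-i)$; it is $\bar\partial$-closed since $[\bar\partial,\partial_\eta]=0$. Decompose it along the BGG summands $\mathcal O_\mathfrak q(\lambda_b)$ with $|b|=|a|+i+1$; by Proposition~\ref{thm relative bgg sequence} only components with $a<b$ survive. For each such $b$ different from the distinguished intermediate partitions leading to~$a'$, Lemma~\ref{lemma sections over stein set} provides $H^{\ell(a)-i}(\tau^{-1}(\mathcal U),\mathcal O_\mathfrak q(\lambda_b))=0$ (either because $b\notin S^k$ or because $\ell(b)\ne\ell(a)-i$), so that component is $\bar\partial$-exact and I pick a primitive in $\mathcal E^{0,\ell(a)-i-1}_\mathfrak q(\tau^{-1}(\mathcal U),\lambda_b)$. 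Collecting these primitives defines $\alpha_{i+1}$. After $r-1$ steps the form $\partial_\eta\alpha_{r-1}$ has bi-degree $(|a'|,\ell(a'))$; project onto the $\mathcal O_\mathfrak q(\lambda_{a'})$-summand to obtain a $\bar\partial$-closed form and define $D_{a'}^a\varphi$ as the corresponding section of $\mathcal O_\mathfrak p(\mu_{a'})$ via Lemma~\ref{lemma sections over stein set}.

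Linearity is immediate. The map is $\mathrm{G}$-invariant because $\partial_\eta$, $\bar\partial$, the BGG decomposition, and the identifications of Lemma~\ref{lemma sections over stein set} are all $\mathrm{G}$-equivariant; once well-definedness is established, the choices of representatives become irrelevant. Locality follows because, given an open $\mathcal U'\subseteq\mathcal U$ on which $\varphi$ vanishes, one may redo the Dolbeault lifts on the restriction to $\tau^{-1}(\mathcal U')$ so that every $\alpha_i$ vanishes there, whence $D_{a'}^a\varphi$ vanishes on~$\mathcal U'$.

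The main obstacle is well-definedness: different choices of $\alpha_0$ and of the primitives $\alpha_i$ could conceivably produce different outputs. Two choices of $\alpha_0$ differ by a $\bar\partial$-exact form, which is harmless after passing to cohomology; two choices of $\alpha_{i+1}$ differ by a $\bar\partial$-closed form whose subsequent $\partial_\eta$-image, when propagated along the rest of the zig-zag and projected to $\mathcal O_\mathfrak q(\lambda_{a'})$, lies in cohomology groups that vanish by Lemma~\ref{lemma sections over stein set}. Equivalently, $D_{a'}^a$ is (the $(a,a')$-component of) the $d_r$-differential in the hypercohomology spectral sequence of the double complex~\eqref{double complex II}, which converges by Propositions~\ref{thm double complex I} and~\ref{thm relative twisted de Rham} to $H^\ast(\tau^{-1}(\mathcal U),\eta^{-1}\mathcal O_\mathfrak r(\lambda))$; well-definedness is then built into the spectral-sequence formalism and reduces to the standard Penrose-transform diagram chase.
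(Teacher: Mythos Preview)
Your approach is essentially the paper's: a zig-zag in the double complex~\eqref{double complex II}, equivalently the page-$r$ differential of the associated spectral sequence. The paper carries this out concretely, observing that since $a,a'\in S^k$ with $r(a')=r(a)+1$ and $a<a'$, one has $|a'|-|a|\in\{1,2\}$; for $|a'|-|a|=1$ the map is just $\partial_{a'}^a$ on cohomology, while for $|a'|-|a|=2$ there are exactly two \emph{non-symmetric} partitions $b,c$ with $a<b,c<a'$, and one picks $\bar\partial$-primitives $\beta,\gamma$ of $\partial_b^a\alpha,\partial_c^a\alpha$ and sets $D_{a'}^a[\alpha]=[\partial_{a'}^b\beta+\partial_{a'}^c\gamma]$.

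One point of your write-up deserves correction. You say that you pick primitives for the components indexed by $b$ \emph{different from} the ``distinguished intermediate partitions leading to $a'$''. In the case $|a'|-|a|=2$ this is backwards: the components for which primitives are needed are precisely those of the intermediate partitions $b,c$ with $a<b,c<a'$, and the reason primitives exist is that $b,c\notin S^k$, so all their direct images vanish by Lemma~\ref{lemma sections over stein set}. Components indexed by $b'$ with $a<b'$ but $b'\nless a'$ are irrelevant since $\partial_{a'}^{b'}=0$. Relatedly, your identification of $D_{a'}^a$ with the literal $d_r$-differential is slightly loose: $d_2$ is a priori defined only on $E_2=\ker d_1/\operatorname{im} d_1$, whereas $D_{a'}^a$ is defined on all of $H^{\ell(a)}$. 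What makes this work is exactly that the intermediate $b,c$ are non-symmetric, so the relevant $E_1$-groups vanish and no $\ker d_1$ restriction is needed on the $a$-component. With this clarified, your argument and the paper's coincide.
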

\begin{proof}
Let us for a moment put $\mathcal{V}:=\tau^{-1}(\mathcal U)$. Using the isomorphisms from~(\ref{sections over stein set}), it is enough to def\/ine a map $H^{\ell(a)}(\mathcal{V},\mathcal{O}_\mathfrak{q}(\lambda_a))\rightarrow H^{\ell(a')}(\mathcal{V},\mathcal{O}_\mathfrak{q}(\lambda_{a'}))$ which has the right properties. By assumption, we have $|a'|-|a|\in\{1,2\}$. Let us f\/irst consider $|a'|-|a|=1$. Then $q:=\ell(a')=\ell(a)$ and by (\ref{relative differential in double complex}), we have the map $\partial_{a'}^a\colon \mathcal{E}^{0,q}_\mathfrak{q}(\mathcal{V},\lambda_a)\rightarrow\mathcal{E}^{0,q}_\mathfrak{q}(\mathcal{V},\lambda_{a'})$ in
the double complex (\ref{double complex II}). The induced map on cohomology is $D_{a'}^a$.

If $|a'|-|a|=2$, then $q:=\ell(a)=\ell(a')+1$ and we f\/ind that there are precisely two non-symmetric partitions $b,c\in\mathbb{N}^{k,n}_{++}$ such that $a<b<a'$ and $ a<c<a'$. Then there is a diagram
\begin{gather}\label{def of second order operator}\begin{split} &
\xymatrix{\mathcal{E}^{0,q}_\mathfrak{q}(\mathcal{V},\lambda_a)\ar[r]^{\!\!\!\!\!\!\!\!\!\!\!\!\!\!\!(\partial^a_b,\partial_c^a)}&\mathcal{E}^{0,q}_\mathfrak{q}(\mathcal{V},\lambda_b)\oplus\mathcal{E}^{0,q}_\mathfrak{q}(\mathcal{V},\lambda_c)&\\
&\mathcal{E}^{0,q-1}_\mathfrak{q}(\mathcal{V},\lambda_b)\oplus\mathcal{E}^{0,q-1}_\mathfrak{q}(\mathcal{V},\lambda_c)\ar[u]^{(-1)^p\bar\partial}\ar[r]^{ \ \ \ \ \ \ \ \ \ \partial_{a'}^b+\partial_{a'}^c}&\mathcal{E}^{0,q-1}_\mathfrak{q}(\mathcal{V},\lambda_{a'}),}\end{split}
\end{gather}
which lives in the double complex (\ref{double complex II}). Let $\alpha\in\mathcal{E}^{0,q}_\mathfrak{q}(\mathcal{V},\lambda_a)$ be $\bar\partial$-closed. Then $\partial_{b}^a\alpha$ and $\partial_{c}^a\alpha$ are also $\bar\partial$-closed and thus by Lemma \ref{lemma sections over stein set} and the isomorphism~(\ref{computing sheaf cohomology groups}), we can f\/ind $\beta$ and $\gamma$ such that $\partial_{b}^a\alpha=(-1)^p\bar\partial\beta$ and $\partial_c^a\alpha=(-1)^p\bar\partial\gamma$ where $p=|a|+1$. Since the relative BGG sequence is a complex, we have
\begin{gather*}
 \bar\partial\big(\partial_{a'}^b\beta+\partial_{a'}^c\gamma\big)=(-1)^p(\partial_{a'}^b\partial_b^a+\partial_{a'}^c\partial_c^a)\alpha=0,
\end{gather*}
which shows that $\partial_{a'}^b\beta+\partial_{a'}^c\gamma$ is a cocycle. Of course this elements depends on choices but we claim that $[\partial_{a'}^b\beta+\partial_{a'}^c\gamma]$ depends only on $[\alpha]$. It is easy to see that $[\partial_{a'}^b\beta+\partial_{a'}^c\gamma]$ does not depend on the choices of $\beta$ and $\gamma$. If $[\alpha]=0$, say $\alpha=(-1)^{p-1}\bar\partial\varrho$, then
we may put $\beta=-\partial_b^a\varrho$ and $\gamma=-\partial_c^a\varrho$ and thus $\partial_{a'}^b\beta+\partial_{a'}^c\gamma=-(\partial_{a'}^b \partial^a_b+\partial^c_{a'} \partial_c^a)\varrho=0$. Hence, we can put $D_{a'}^a[\alpha]:=[\partial_{a'}^b\beta+\partial_{a'}^c\gamma]$.

From the construction is clear that $D_{a'}^a$ is linear. The locality follows from the fact that $D_{a'}^a$ is compatible with restrictions to smaller Stein subsets of $\mathcal{U}$. As the operators in the double complex (\ref{double complex II}) are $\mathrm{G}$-invariant, it is easy to verify that each operator $D_{a'}^a$ is $\mathrm{G}$-invariant.
\end{proof}

Put $\mathcal{O}_j:=\bigoplus_{a\in S^k_{j}}\mathcal{O}_\mathfrak{p}(\mu_a)$ and $\mathcal{O}_j(\mathcal{U}):=\Gamma(\mathcal{U},\mathcal{O}_j)$. If $a\in S^k_j$ and $s\in\mathcal{O}_j(\mathcal{U})$, then we denote by $s_a$ the $a$-th component of $s$ so that we may write $s=(s_a)_{a\in S^k_j}$.
We call the following complex~(\ref{sequence of diff ope}) the $k$-\textit{Dirac complex}.

\begin{Theorem}\label{theorem complex}
With the notation set above, there is a complex
\begin{gather}\label{sequence of diff ope}
\mathcal{O}_0(\mathcal{U})\xrightarrow{D_0}\mathcal{O}_1(\mathcal{U})\rightarrow\cdots \rightarrow\mathcal{O}_j(\mathcal{U})\xrightarrow{D_j}\mathcal{O}_{j+1}(\mathcal{U})\rightarrow\cdots
\end{gather}
of linear $\mathrm{G}$-invariant operators where
\begin{gather*}
 (D_js)_{a'}=\sum_{a<a'}D_{a'}^as_a.
\end{gather*}
\end{Theorem}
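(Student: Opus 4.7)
The strategy is to encode all operators $D^a_{a'}$ as a single zigzag construction in the total complex of the double complex (\ref{double complex II}), and then deduce $D_{j+1}\circ D_j=0$ from $D^2=0$, where $D:=(-1)^p\bar\partial+\partial_\eta$ is the total differential. The key ingredient is a lifting lemma: for any $\bar\partial$-closed element $\alpha\in\bigoplus_{a\in S^k_j}\mathcal{E}^{0,\ell(a)}_\mathfrak{q}(\tau^{-1}(\mathcal U),\lambda_a)$ representing a class $\sigma\in\mathcal O_j(\mathcal U)$, I claim there is a lift $\tilde\alpha\in T^{\binom{n}{2}+j}(\mathcal U)$ whose projection to the ``good'' subspace $\bigoplus_{a\in S^k_j}\mathcal{E}^{0,\ell(a)}_\mathfrak{q}(\tau^{-1}(\mathcal U),\lambda_a)$ equals $\alpha$, and with $D\tilde\alpha$ lying entirely in the ``good'' subspace one level up, namely $\bigoplus_{a'\in S^k_{j+1}}\mathcal{E}^{0,\ell(a')}_\mathfrak{q}(\tau^{-1}(\mathcal U),\lambda_{a'})$.

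To build $\tilde\alpha$ I would proceed inductively on bidegree. Applying $\partial_\eta$ to $\alpha$ produces contributions along various $\lambda_b$ for $|b|=|a|+1$; those with $b\in S^k_{j+1}$ are retained as the first-order representatives of $D^a_b[\alpha]$ from Lemma \ref{lemma diff op}, while for $b\notin S^k$ the sheaf cohomology $\tau^q_\ast\mathcal O_\mathfrak{q}(\lambda_b)$ vanishes (Proposition \ref{thm direct images}) and the Dolbeault identification of Lemma \ref{lemma sections over stein set} makes $\partial^a_b\alpha$ $\bar\partial$-exact on $\tau^{-1}(\mathcal U)$, so one absorbs it into a correction term $\beta_b\in\mathcal{E}^{0,\ell(a)-1}_\mathfrak{q}(\tau^{-1}(\mathcal U),\lambda_b)$. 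Repeating on the new bad components sitting at bidegree $(|a|+2,\ell(a)-1)$ and further down, the process terminates after at most $\ell(a)$ steps since only bidegrees $(|a|+r,\ell(a)-r)$ can occur. The good projections of $D\tilde\alpha$ harvested at bidegrees $(|a|+1,\ell(a))$ and $(|a|+2,\ell(a)-1)$ precisely reconstruct the first- and second-order operators $D^a_{a'}$, the latter matching the explicit diagram chase in (\ref{def of second order operator}) once one identifies the intermediate partitions $b,c$ there with the non-symmetric partitions absorbed during the first iteration.

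Granted this lifting lemma and the identification of $D\tilde\alpha$'s components with the $D^a_{a'}[\alpha]$, the conclusion is a one-line argument. Let $\sigma\in\mathcal O_j(\mathcal U)$ and choose a lift $\tilde\sigma$ as above, so that $\xi:=D\tilde\sigma$ componentwise represents $D_j\sigma$. Because $\xi$ already lies entirely in the good blocks at level $j+1$, it is itself a valid lift (in the sense of the lifting lemma) for the class $D_j\sigma$, and applying the lemma's identification at the next stage shows that $D_{j+1}(D_j\sigma)$ is represented by the good projections of $D\xi=D^2\tilde\sigma=0$, giving $D_{j+1}\circ D_j=0$. The principal obstacle is the first paragraph: carrying out the inductive zigzag with consistent sign conventions coming from the $(-1)^p$ in $D'$, and verifying on the nose that the projections of $D\tilde\alpha$ coincide with the operators $D^a_{a'}$ built in Lemma \ref{lemma diff op}, especially in the second-order case where the definition itself already involves an auxiliary diagram chase through non-symmetric partitions.
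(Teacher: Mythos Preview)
Your route is genuinely different from the paper's. The paper proves $D_{j+1}D_j=0$ by a direct case analysis: for $a<a'$ in $S^k$ with $r(a')=r(a)+2$ one has $|a'|-|a|\in\{3,4\}$, and in each case the relation $\sum_{a''}D^{a''}_{a'}D^a_{a''}=0$ is verified by an explicit diagram chase using only that the relative BGG sequence is a complex. Your idea of packaging everything into a single lift in the total complex and invoking $D^2=0$ is conceptually cleaner, but the lifting lemma as you state it is too strong, and this creates a real gap.

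The problem appears already at the second correction step. With $\tilde\alpha^{(1)}=\alpha_a+\epsilon\sum_{\text{bad }e}\beta_e$ (so that the bad components at $(|a|+1,\ell(a))$ are killed), the remaining bad part $B_2$ of $D\tilde\alpha^{(1)}$ at bidegree $(|a|+2,\ell(a)-1)$ is \emph{not} $\bar\partial$-closed in general. Indeed, using $\bar\partial\beta_e=\partial^a_e\alpha_a$ and the BGG relation $\sum_{e}\partial^e_c\partial^a_e=0$, one finds for bad $c$ that
\[
\bar\partial B_2\big|_c \;=\; \epsilon\!\!\sum_{\text{bad }e:\,a<e<c}\!\!\partial^e_c\partial^a_e\alpha_a \;=\; -\epsilon\!\!\sum_{\text{good }e:\,a<e<c}\!\!\partial^e_c\partial^a_e\alpha_a,
\]
and the right-hand side is typically nonzero: any first-order good target $e\in S^k_{j+1}$ admits non-symmetric $c>e$ with $\partial^e_c\neq 0$. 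Since $B_2$ is not $\bar\partial$-closed you cannot invoke the vanishing of direct images to write it as $\bar\partial(\text{something})$, so the inductive zigzag stalls. Consequently you cannot arrange $D\tilde\alpha$ to lie \emph{entirely} in good blocks, and then your one-line endgame breaks: the element $\xi:=D\tilde\alpha$ carries the leftover $B_2$ in bad blocks, so it is not ``itself a valid lift'' for $D_j\sigma$, while the purely-good part $G_1+G_2$ is no longer $D$-closed.

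The strategy is salvageable, but it requires the same kind of bookkeeping the paper does: stop the zigzag after one correction (which already produces the representatives of all $D^a_{a'}$), and then track how the residual $B_2$ interacts with the next-level zigzag, using the combinatorial constraint $|a'|-|a|\in\{3,4\}$ for $r(a')=r(a)+2$ to see that the unwanted contributions cancel. At that point you are essentially reproducing the paper's case analysis.
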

\begin{proof}
Let $a,a'\in S^k$ be such that $a<a'$, $r(a)=r(a')-2$. We need to verify that $\sum\limits_{a''\in S^k\colon a<a''<a'}D_{a'}^{a''}D_{a''}^{a}=0$. Observe that $|a'|-|a|\in\{3,4\}$. Let us f\/irst assume that $|a'|-3=|a|$. Then there are at most two symmetric partitions $a''$ such that $a<a''<a'$. If there is only one such symmetric partition $a''$, then, since the relative BGG sequence is a~complex, it follows easily that $D_{a'}^{a''}D_{a''}^{a}=0$. So we can assume that there are two symmetric partitions, say $a''_1$, $a''_2$. Consider for example
\begin{gather*}\xymatrix{&b=\sYtj\ar[rd]\ar[r]&c=\sYtd\ar[rd]&\\
a=\sYdj\ar[ru]\ar[r]\ar[rd]&a''_1=\sYdd\ar[ru]\ar[rd]&a''_2=\sYtjj\ar[r]&a'=\sYtdj.\\
&b'=\sYdjj\ar[r]\ar[ru]&c'=\sYddj\ar[ru].&}
\end{gather*}
Then we can f\/ind $\beta$ and $\beta'$ so that $\partial_b^a\alpha=(-1)^{p}\bar\partial\beta$ and $\partial_{b'}^a\alpha=(-1)^{p}\bar\partial\beta'$ where $p=|b|=|b'|$. Then $[D_{a_2''}^a\alpha]=[\partial_{a''_2}^b\beta+\partial_{a''_2}^{b'}\beta']$ which implies
\begin{gather*}
(-1)^{p+1}\bar\partial\big(\partial_c^b\beta\big)=(-1)^{p+1}\partial_c^b\bar\partial\beta=-\partial_c^b \partial_b^a\alpha=\partial_{c}^{a_1''} \partial_{a_1''}^a\alpha
\end{gather*}
and similarly $(-1)^{p+1}\bar\partial(\partial_{c'}^{b'}\beta')=\partial_{c'}^{a_1''}\partial_{a''_1}^a\alpha$. Hence, we conclude that \begin{gather*} D^{a_1''}_{a'} D^{a}_{a_1''}[\alpha]=\big[\partial_{a'}^{c}\partial_c^b\beta+\partial_{a'}^{c'}\partial_{c'}^{b'}\beta'\big]\end{gather*} and thus
\begin{gather*}
D_{a'}^{a_2''} D_{a_2''}^a[\alpha]=\big[\partial_{a'}^{a_2''}\big(\partial_{a_2''}^b\beta+\partial_{a_2''}^{b'}\beta'\big)\big] =-\big[\partial_{a'}^{c}\partial_c^b\beta+\partial_{a'}^{c'}\partial_{c'}^{b'}\beta'\big]=-D^{a_1''}_{a'} D^{a}_{a_1''}[\alpha].
\end{gather*}
This completes the proof when $|a'|=|a|+3$ and now we may assume $|a'|=|a|+4$.

We put $A'':=\{a''\in S^k|\ a<a''<a'\}$, $B:=\{b\in\mathbb{N}^{k,n}_{++}\,|\, \exists\, a''\in A''\colon a<b<a''\}$, $B':=\{b'\in\mathbb{N}^{k,n}_{++}\,|\, \exists\, a''\in A''\colon a''<b'<a'\}$ and f\/inally $C:=\{c\in\mathbb{N}^{k,n}_{++}\setminus S^k\,|\, \exists \, b\in B,\ \exists\, b'\in B'\colon b'<c<b''\}$.
 Consider for example the diagram
\begin{gather*}\xymatrix{
&b_1=\sYjj\ar[r]\ar[rd]&c_1=\sYjjj\ar[r]&b'_1=\sYdjj\ar[rd]\\
a=\sYj\ar[rd]\ar[ru]&&a''=\sYdj\ar[rd]\ar[ru]&&a'=\sYtjj,\\
&b_2=\sYd\ar[r]\ar[ru]&c_2=\sYt\ar[r]&b_2'=\sYtj\ar[ru]&}
\end{gather*}
where $A''=\{a''\}$, $B=\{b_1,b_2\}$, $B'=\{b_1',b_2'\}$ and $C=\{c_1,c_2\}$. As above, the set $A''$ contains at most two elements but we will not need that.

Now we can proceed as above. There are $\beta_i$ such that $(-1)^{p}\bar\partial\beta_i=\partial_{b_i}^a\alpha$ where $p=|b_i|=|a|+1$ and so $[D_{a''_j}^{a}\alpha]=\big[\sum\limits_{b_i\in B}\partial_{a''_j}^{b_i}\beta_i\big]$ for every $a''_j\in A''$. As the relative BGG sequence is a~complex, we have for each $c_\ell\in C$:
\begin{gather*} \bar\partial\bigg(\sum_{b_i\in B}\partial_{c_\ell}^{b_i}\beta_i\bigg)=\sum_{b_i\in B}\partial_{c_\ell}^{b_i}\bar\partial\beta_i=(-1)^{p}\sum_{b_i\in B}\partial_{c_\ell}^{b_i}\partial_{b_i}^a\alpha=0.
\end{gather*}
As above, there is $\gamma_\ell$ such that $(-1)^{p+1}\bar\partial\gamma_\ell=\sum\limits_{b_i\in B}\partial_{c_\ell}^{b_i}\beta_i$. Then for $b'_s\in B'$:
\begin{gather*}
(-1)^{p+2}\bar\partial\bigg(\sum_{c_\ell\in C}\partial_{b'_s}^{c_\ell}\gamma_\ell\bigg) =-\sum_{c_\ell\in C}\partial_{b'_s}^{c_\ell}((-1)^{p+1}\bar\partial\gamma_\ell)
 =-\sum_{c_\ell\in C,\ b_i\in B}\partial_{b'_s}^{c_\ell}\partial_{c_\ell}^{b_i}\beta_i\\
\hphantom{(-1)^{p+2}\bar\partial\bigg(\sum_{c_\ell\in C}\partial_{b'_s}^{c_\ell}\gamma_\ell\bigg)}{}
 =\sum_{b_i\in B,\ a''_j\in A''_j\colon b_i<a''_j<b_{s}'}\partial_{b'_s}^{a''_j}\partial_{a''_j}^{b_i}\beta_i
 =\sum_{a''_j\in A\colon a''_j<b'_s}\partial_{b'_s}^{a''_j}\bigg(\sum_{b_i\in B}\partial_{a''_j}^{b_i}\beta_i\bigg)\\
 \hphantom{(-1)^{p+2}\bar\partial\bigg(\sum_{c_\ell\in C}\partial_{b'_s}^{c_\ell}\gamma_\ell\bigg)}{}
 =\sum_{a''_j\in A\colon a''_j<b'_s}\partial_{b'_s}^{a''_j}(D_{a''_j}^a\alpha).
\end{gather*}
This implies that $\sum\limits_{a''_j\in A''}D_{a'}^{a''}D_{a''}^a([\alpha])$ is the cohomology class of
 \begin{gather*}
\sum_{a''_j\in A''}\bigg(\sum_{b'_s\in B'\colon a''_j<b_s'}\partial_{a'}^{b'_s}\bigg(\sum_{c_\ell\in C}\partial_{b'_s}^{c_\ell}\gamma_\ell\bigg)\bigg) =
\sum_{b'_s\in B'}\sum_{c_\ell\in C}\partial_{a'}^{b'_s}\partial_{b'_s}^{c_\ell}\gamma_\ell \\
\hphantom{\sum_{a''_j\in A''}\bigg(\sum_{b'_s\in B'\colon a''_j<b_s'}\partial_{a'}^{b'_s}\bigg(\sum_{c_\ell\in C}\partial_{b'_s}^{c_\ell}\gamma_\ell\bigg)\bigg)}{}
 =\sum_{c_\ell\in C}\sum_{b'_s\in B'}\partial_{a'}^{b'_s}\partial_{b'_s}^{c_\ell}\gamma_\ell=\sum_{c_\ell\in C}0=0.
\end{gather*}
In the f\/irst equality we use the fact that given $b'_s\in B'$, there is only one $a''_j\in A''$ such that $a''_j<b'_s$ and in the third equality we use that the relative BGG sequence is a complex once more.
\end{proof}

\section[Formal exactness of $k$-Dirac complexes]{Formal exactness of $\boldsymbol{k}$-Dirac complexes}\label{section formal exactness}

We will proceed in Section \ref{section formal exactness} as follows. In Section~\ref{section formal neighborhood of X_0} we will recall the def\/inition of the normal bundle of the analytic subvariety $X_0:=\tau^{-1}(x_0)$ and give the def\/inition of the weighted formal neighborhood of $X_0$. In Section~\ref{section formal double complex} we will consider the double complex of twisted relative forms from Section~\ref{section computing PT} and restrict it to the weighted formal neighborhood of~$X_0$. In Section \ref{section les of weighted jets} we will prove that the operators def\/ined in Section \ref{section complex} are dif\/ferential operators and f\/inally, in Theorem~\ref{theorem formal exactness} we will prove that the $k$-Dirac complexes are formally exact.

\subsection[Formal neighborhood of $\tau^{-1}(x_0)$]{Formal neighborhood of $\boldsymbol{\tau^{-1}(x_0)}$}\label{section formal neighborhood of X_0}
Let us f\/irst recall notation from Section \ref{section filtration of TM and TCS}. There is the 2-step f\/iltration $\{0\}=F_0^M\subset F_{-1}^M\subset F_{-2}^M=TM$ and the 3-step f\/iltration $\{0\}=F_0^{CS}\subset F^{CS}_{-1}\subset F^{CS}_{-2}\subset F^{CS}_{-3}=TM$. Moreover, $F_{-1}^{CS}$ decomposes as $E^{CS}\oplus F^{CS}$ where $E^{CS}=\ker(T\eta)$ and $F^{CS}=\ker(T\tau)$. From this it follows that~$E^{CS}$ and~$F^{CS}$ are integrable distributions. Dually, there are f\/iltrations $T^\ast M=F_1^M\supset F_2^M\supset F_3^M=\{0\}$ and $T^\ast CS=F_1^{CS}\supset F_2^{CS}\supset F_3^{CS}\supset F_4^{CS}=\{0\}$ where $F_i^M$ is the annihilator of $F_{-i+1}^M$ and similarly for $F_i^{CS}$. We put $G_i^M:=F_i^M/F_{i+1}^M$ and $G_i^{CS}:=F_i^{CS}/F_{i+1}^{CS}$ so that
\begin{gather*}
 gr(TM)=G_{-2}^M\oplus G_{-1}^M,\qquad gr(T^\ast M)= G_1^M\oplus G_2^M,\\
 gr(TCS)= G_{-3}^{CS}\oplus G_{-2}^{CS}\oplus G_{-1}^{CS},\qquad gr(T^\ast CS)=G_1^{CS}\oplus G_2^{CS}\oplus G_3^{CS},\\
 G_i^M\cong \big(G_{-i}^M\big)^\ast,\quad i=1,2 \qquad \mathrm{and} \qquad G_i^{CS}\cong\big(G_{-i}^{CS}\big)^\ast,\quad i=1,2,3.
\end{gather*}

Let us now brief\/ly recall Section~\ref{section ideal sheaf}. If $X$ is an analytic subvariety of a complex manifold $Y$, then the normal bundle $N_X$ of $X$ in $Y$ is the quotient $(TY|_X)/TX$ and the co-normal bundle $N^\ast_X$ is the annihilator of $TX$ inside $T^\ast X$.
In particular, the origin $x_0$ can be viewed as an analytic subvariety of $M$ with local def\/ining equation $X_1=0$, $X_2=0$ and $ Y=0$ where the matrices are those as in~(\ref{affine subset of M}). For each $i\ge1$ there is the associated ($i$-th power of the) ideal sheaf $\mathcal{I}_{x_0}^i$. This is a sheaf of $\mathcal{O}_M$-modules such that
\begin{gather*}
 (\mathcal{I}_{x_0})^i_x=
 \begin{cases}
 (\mathcal{O}_M)_x, & x\ne x_0,\\
 \mathcal{F}^i_{x_0}, & x=x_0,
 \end{cases}
\end{gather*}
where $\mathcal{O}_M$ is the structure sheaf on $M$, $\mathcal{F}^i_{x}=\{f\in(\mathcal{O}_M)_x\colon j^i_xf=0\}$ and the subscript $x$ stands for the stalk at $x\in M$ of the corresponding sheaf.

Also recall from Section~\ref{section wdo} the def\/inition of weighted jets. For each $i\ge0$, there is a short exact sequence of vector spaces
\begin{gather}\label{ses of weighted jets}
0\rightarrow\mathfrak{F}^{i+1}_{x_0}\rightarrow\mathfrak{F}^i_{x_0}\rightarrow\mathfrak{gr}^{i+1}_{x_0}\rightarrow0,
\end{gather}
where $\mathfrak{F}^i_{x_0}:=\{f\in\mathcal{O}_{x_0}\colon \mathfrak{j}^i_{x_0}f=0\}$. We will view~(\ref{ses of weighted jets}) also as a short exact sequence of sheaves over~$\{x_0\}$.

Put $X_0:=\tau^{-1}(x_0)$. Recall from Lemma~\ref{lemma fibers} that $X_0$ is complex manifold which is biholomorphic to the connected component $\mathrm{Gr}^+_h(n,n)$ of $\mathbb{C}^{n}$ in the Grassmannian of maximal totally isotropic subspaces in~$\mathbb{C}^{2n}$.

\begin{Remark}\label{remark notation over X_0}
If $V^{CS}$ is a holomorphic vector bundle over $CS$, we will for brevity put $V:=V^{CS}|_{X_0}$. We also put $\tau_0:=\tau|_{X_0}$.
\end{Remark}

\begin{Lemma}\label{lemma analytic subvariety X_0}\quad
\begin{enumerate}\itemsep=0pt
 \item [$(i)$] $X_0$ is a closed analytic subvariety of $CS$ and there is an isomorphism of sheaves $\mathcal{I}_{X_0}\cong\tau^\ast\mathcal{I}_{x_0}$.
\item [$(ii)$] There is an isomorphism of vector bundles\footnote{Here we use notation set in Remark \ref{remark notation over X_0}.} $TX_0\cong F$.
\item [$(iii)$] The normal bundle $N$ of $X_0$ in $CS$ is isomorphic to $\tau_0^{\ast} T_{x_0}M$. In particular, $N$ is a trivial holomorphic vector bundle.
\end{enumerate}
\end{Lemma}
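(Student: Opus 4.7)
The plan is to reduce all three claims to the fact that $\tau\colon CS\to M$ is a holomorphic submersion with a convenient local product structure. By Lemma~\ref{lemma set tau^{-1}(U)}, over the open affine $\mathcal{X}\subset M$ the map $\tau$ is biholomorphic to the projection $\mathcal{X}\times\tau^{-1}(x_0)\to\mathcal{X}$, and $\mathrm{G}$-equivariance propagates this local product description to a neighborhood of every point of $CS$. Already this tells us that $X_0=\tau^{-1}(x_0)$ is a closed analytic subvariety of $CS$, which is the first half of~(i).

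For the identification $\mathcal{I}_{X_0}\cong\tau^\ast\mathcal{I}_{x_0}$ I would work locally in the coordinates from Lemma~\ref{lemma set tau^{-1}(U)}: the ideal $\mathcal{I}_{x_0}$ is generated over $\mathcal{X}$ by the matrix entries of $X_1$, $X_2$ and $Y$ appearing in~(\ref{affine subset of M}), and pulling these functions back through the projection $\tau^{-1}(\mathcal{X})\cong\mathcal{X}\times\tau^{-1}(x_0)\to\mathcal{X}$ gives exactly the functions on $\tau^{-1}(\mathcal{X})$ that vanish on $X_0\cap\tau^{-1}(\mathcal{X})$. The delicate point is to check that the natural map $\tau^\ast\mathcal{I}_{x_0}\to\mathcal{O}_{CS}$ is actually injective, which follows because $\mathcal{O}_{CS}$ is flat over $\tau^{-1}\mathcal{O}_M$ (again a direct consequence of the local product structure).

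Parts (ii) and (iii) will then fall out of the short exact sequence $0\to F^{CS}\to TCS\xrightarrow{T\tau}TM\to 0$ recorded in~(\ref{E and F as kernels of tangent maps}). Restricting to $X_0$, the identity $TX_0=\ker(T\tau_0)$ combined with $F^{CS}=\ker(T\tau)$ yields $TX_0=F^{CS}|_{X_0}=F$, which is~(ii). Dividing the restricted sequence by this kernel gives the normal bundle $N=TCS|_{X_0}/TX_0\cong\tau_0^\ast TM$, and since $\tau_0\colon X_0\to\{x_0\}\hookrightarrow M$ is constant, $\tau_0^\ast TM$ is literally the trivial product bundle $X_0\times T_{x_0}M$, which proves~(iii). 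I expect the only nontrivial ingredient to be the local verification of the ideal-sheaf identification in~(i); once that is settled, (ii) and (iii) are bookkeeping with the filtrations set up in Section~\ref{section filtration of TM and TCS}.
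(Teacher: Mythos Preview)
Your proposal is correct and follows essentially the same route as the paper: both reduce everything to the local product structure $\tau^{-1}(\mathcal{X})\cong\mathcal{X}\times X_0$ from Lemma~\ref{lemma set tau^{-1}(U)}, identify $TX_0$ with $F=\ker(T\tau)|_{X_0}$, and obtain $N$ from the surjection $T\tau\colon TCS|_{X_0}\to T_{x_0}M$. Your treatment of~(i) is in fact more careful than the paper's, which simply asserts that the ideal-sheaf identification ``easily follows'' from the product decomposition; your flatness remark for the injectivity of $\tau^\ast\mathcal{I}_{x_0}\to\mathcal{O}_{CS}$ is a genuine (if standard) point that the paper leaves implicit.
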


\begin{proof}\label{lemma X0}
(i) By Lemma \ref{lemma set tau^{-1}(U)}, $\tau^{-1}(\mathcal{X})=\mathcal{X}\times X_0$ where $\mathcal{X}=(\textbf{p}\circ\exp)(\mathfrak{g}_-)$. From this the claim easily follows.

(ii) As $X_0=\tau^{-1}(x_0)$, it is clear that $TX_0=\ker(T\tau)|_{X_0}$. But we know that $\ker(T\tau)=F$ and the claim follows.

(iii) By def\/inition, $\tau^\ast_0 T_{x_0}M=\{(x,v)\,|\, x\in X_0,\ v\in T_{x_0}M\}$. Hence, there is an obvious projection $TCS|_{X_0}\rightarrow\tau^\ast_0 T_{x_0}M$, $(x,v)\mapsto (x,T_x\tau(v))$ which descends to an isomorphism $N\rightarrow\tau^\ast_0 T_{x_0}M$.
\end{proof}

Recall now the linear isomorphisms $\mathfrak{g}_{i}\cong (G_i^M)_{x_0}$, $i=-2,-1,1,2$, from~(\ref{linear isomorphisms over x0}). In particular, we can view $\mathfrak{g}_i$ as the f\/iber of $G_i^M$ over $\{x_0\}$ and thus also as a vector bundle over $\{x_0\}$. We use this point of view in the following def\/inition.

\begin{Definition}\label{df weighted co-normal neigh}
Put $N^\ast_i:=\tau^\ast_0 \mathfrak{g}_i$, $i=1,2$ and $ \mathfrak{S}^\ell N^\ast:=\tau^\ast_0\mathfrak{gr}^{\ell}$, $\ell=0,1,2,\dots$.
\end{Definition}

Notice that $N^\ast_i$, $i=1,2$ and $\mathfrak{S}^\ell N^\ast$, $\ell\ge0$ are by def\/inition trivial holomorphic vector bundles over~$X_0$. Recall from the end of Section~\ref{section wdo} that $gr^\ell_{x_0}$ is the subspace of~$\mathfrak{gr}^\ell_{x_0}$ that is isomorphic to~$ S^\ell\mathfrak{g}_1$.

\begin{Lemma}
The co-normal bundle $N^\ast $ of $X_0$ in $CS$ is isomorphic to $\tau^\ast_0 T^\ast_{x_0}M$ and the bundle~$N^\ast_2$ is isomorphic to~$G_3$. There are short exact sequences of vector bundles
\begin{gather}\label{ses with conormal bdle}
0\rightarrow N^\ast_2\rightarrow N^\ast\rightarrow N^\ast_1\rightarrow0\qquad \text{and}\qquad 0\rightarrow G_2\rightarrow N^\ast_1\rightarrow E^{\ast}\rightarrow0
\end{gather}
over $X_0$. Moreover, for each $\ell\ge0$ there are isomorphisms of vector bundles
\begin{gather}\label{weighted formal neighborhood}
\mathfrak{S}^\ell N^\ast=\bigoplus_{\ell_1+2\ell_2=\ell } {S}^{\ell_1}N^\ast_1\otimes {S}^{\ell_2}N^\ast_2 \qquad \text{and}\qquad
 S^\ell N^\ast_1=\tau^\ast gr^\ell_{x_0}.
\end{gather}
\end{Lemma}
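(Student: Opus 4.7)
First I would dualize the isomorphism $N\cong\tau_0^*T_{x_0}M$ of Lemma~\ref{lemma analytic subvariety X_0}(iii) to obtain $N^*\cong\tau_0^*T^*_{x_0}M$; equivalently, the transpose $T^*\tau$ embeds $\tau_0^*T^*_{x_0}M$ into $T^*CS|_{X_0}$ with image equal to the annihilator of $TX_0=F^{CS}|_{X_0}$, which is $N^*$ by the definition in Section~\ref{section ideal sheaf}. Applying the exact functor $\tau_0^*$ to the short exact sequence $0\to(F_2^M)_{x_0}\to T^*_{x_0}M\to(G_1^M)_{x_0}\to 0$ then produces the first short exact sequence $0\to N_2^*\to N^*\to N_1^*\to 0$.

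To identify $N_2^*$ with $G_3$, I would compare the two filtrations using the block decomposition of Section~\ref{section lie algebra}. At the origin the linear map $T\tau\colon\mathfrak{q}_-\to\mathfrak{g}_-$ is the projection modulo $\mathfrak{p}$, the summand $\mathbb{F}\subset\mathfrak{q}_{-1}$ lies in $\mathfrak{g}_0\subset\mathfrak{p}$, while $\mathfrak{q}_{-3}\subset\mathfrak{g}_{-2}$ and $\mathfrak{q}_{-2},\mathbb{E}\subset\mathfrak{g}_{-1}$. Hence $T\tau(F_{-2}^{CS})=F_{-1}^M$, and because the $\mathfrak{q}_{-3}$-component of any element outside $F_{-2}^{CS}$ lands in $\mathfrak{g}_{-2}\not\subset\mathfrak{g}_{-1}$, also $(T\tau)^{-1}(F_{-1}^M)=F_{-2}^{CS}$. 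Dualizing, $T^*\tau$ carries $(F_2^M)_{x_0}=\operatorname{ann}(F_{-1}^M|_{x_0})$ injectively into $\operatorname{ann}(F_{-2}^{CS}|_{X_0})=F_3^{CS}|_{X_0}=G_3$ (the last equality uses $F_4^{CS}=0$); a dimension count $\dim\mathfrak{g}_2=\dim\mathfrak{q}_3$ upgrades this to the desired isomorphism $N_2^*\cong G_3$.

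For the second short exact sequence, the inclusion $F^{CS}\subset F^{CS}_{-1}$ implies $F_2^{CS}=\operatorname{ann}(F^{CS}_{-1})\subset\operatorname{ann}(F^{CS})=N^*|_{X_0}$, so the filtration of $T^*CS|_{X_0}$ restricts to $F_3^{CS}|_{X_0}\subset F_2^{CS}|_{X_0}\subset N^*$ with subquotients $G_3\cong N_2^*$, $G_2$, and $N^*/F_2^{CS}|_{X_0}$. Dualizing the short exact sequence $0\to F^{CS}|_{X_0}\to F^{CS}_{-1}|_{X_0}\to E^{CS}|_{X_0}\to 0$ shows $N^*/F_2^{CS}|_{X_0}\cong E^*$. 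Modding out by $N_2^*$ in the first exact sequence then yields $0\to G_2\to N_1^*\to E^*\to 0$.

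Finally, the identities in~(\ref{weighted formal neighborhood}) follow formally from Definition~\ref{df weighted co-normal neigh} and~(\ref{weighted jets at x_0}), because $\tau_0^*$ commutes with direct sums, tensor products, and symmetric powers: $\mathfrak{S}^\ell N^*=\tau_0^*\mathfrak{gr}^\ell_{x_0}=\bigoplus_{\ell_1+2\ell_2=\ell}S^{\ell_1}N_1^*\otimes S^{\ell_2}N_2^*$ and $S^\ell N_1^*=\tau_0^*S^\ell\mathfrak{g}_1=\tau_0^*\operatorname{gr}^\ell_{x_0}$. The main obstacle is the degree shift between the $\mathfrak{p}$- and $\mathfrak{q}$-gradings caused by the $\mathbb{F}$-summand of $\mathfrak{q}_{-1}$ lying in $\mathfrak{g}_0$; this shift is exactly what matches $N_2^*$ with $G_3$ (rather than $G_2$) and pins down $E^*$ as the cokernel in the second sequence.
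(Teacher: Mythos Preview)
Your proof is correct and follows essentially the same route as the paper's own proof: dualizing the already established $N\cong\tau_0^*T_{x_0}M$, pulling back the short exact sequence $0\to\mathfrak{g}_2\to T^*_{x_0}M\to\mathfrak{g}_1\to0$, identifying $N_2^*$ with $G_3$ via $(T\tau)^{-1}(F_{-1}^M)=F_{-2}^{CS}$ (which the paper records in Section~\ref{section filtration of TM and TCS}), and deducing the second short exact sequence from the filtration of $N^*\cong F^\perp$ by $F_3^{CS}|_{X_0}\subset F_2^{CS}|_{X_0}$. Your block-decomposition justification of $(T\tau)^{-1}(F_{-1}^M)=F_{-2}^{CS}$ and the explicit dualization $N^*/F_2^{CS}|_{X_0}\cong E^*$ spell out steps the paper leaves implicit, but the underlying argument is the same.
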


\begin{proof} There is a canonical injective vector bundle map $\tau^\ast_0 T^\ast_{x_0}M\rightarrow T^\ast CS$ and a moment of thought shows that its image is contained in $N^\ast$. By comparing dimensions of both vector bundles, we have $\tau^\ast_0 T^\ast_{x_0}M\cong N^\ast$ and thus the f\/irst claim. It is clear that $N_2^\ast=\tau^\ast_0\mathfrak{g}_2$ is the annihilator of $F_{-2}=(T\tau)^{-1}(\mathfrak{g}_{-1})$ and since $G_3=F_{-2}^\perp$, the second claim follows.

The f\/irst sequence in (\ref{ses with conormal bdle}) is the pullback of the short exact sequence $0\rightarrow \mathfrak{g}_2\rightarrow T^\ast_{x_0}M\rightarrow \mathfrak{g}_1\rightarrow0$ and thus, it is short exact. The exactness of the latter sequence follows from the exactness of $0\rightarrow G^{CS}_2\rightarrow F^\perp/G^{CS}_3\rightarrow E^{CS\ast}\rightarrow0$ and the isomorphisms $N^\ast\cong F^\perp$, $G_3\cong N_2^\ast$ and $N_1^\ast\cong N^\ast/N_2^\ast$.

The isomorphisms in (\ref{weighted formal neighborhood}) follow immediately from def\/initions and the isomorphism (\ref{weighted jets at x_0}).
\end{proof}

We know that $\mathfrak{S}^\ell N^\ast$ is a trivial holomorphic vector bundle over the compact base $X_0$. It follows that any global holomorphic section of $\mathfrak{S}^\ell N^\ast$ is a constant $\mathfrak{gr}^{\ell}$-valued function on $X_0$ and that $\mathfrak{S}^\ell N^\ast$ is trivialized by such sections. The same is obviously true also for $S^\ell N^\ast_1$. Let us formulate this as lemma.

\begin{Lemma}\label{lemma global hol sections over X_0}
The holomorphic vector bundles $\mathfrak{S}^\ell N^\ast$ and $ S^\ell N^\ast_1$ are trivial and there are canonical isomorphisms $\mathfrak{gr}^{\ell}_{x_0}\rightarrow\Gamma(\mathcal{O}(\mathfrak{S}^\ell N^\ast))$ and $gr^\ell_{x_0}\rightarrow\Gamma(\mathcal{O}({S}^\ell N^\ast_1))$ of finite-dimensional vector spaces.
\end{Lemma}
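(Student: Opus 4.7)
The plan is to deduce everything from the observation that $\tau_0^{\ast}$ applied to any finite-dimensional vector space (viewed as a bundle over the point $x_0$) produces a genuinely trivial holomorphic bundle, together with the fact that $X_0$ is a compact connected complex manifold so the only holomorphic maps from $X_0$ to a finite-dimensional vector space are the constants.

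First I would unravel Definition~\ref{df weighted co-normal neigh}: because $\mathfrak{g}_i$ and $\mathfrak{gr}^\ell_{x_0}$ are viewed as vector bundles over the single point $x_0$, the pullback along $\tau_0\colon X_0\to\{x_0\}$ is literally the product bundle, i.e., $N^\ast_i\cong X_0\times\mathfrak{g}_i$ and $\mathfrak{S}^\ell N^\ast\cong X_0\times\mathfrak{gr}^\ell_{x_0}$. Since taking symmetric powers commutes with pullback, $S^\ell N^\ast_1\cong X_0\times S^\ell\mathfrak{g}_1\cong X_0\times gr^\ell_{x_0}$ as well, which also matches the second isomorphism in~\eqref{weighted formal neighborhood}. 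This proves the triviality statements.

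Next I would identify global holomorphic sections. A section of a trivial bundle $X_0\times V$ is the same datum as a holomorphic map $X_0\to V$. By Lemma~\ref{lemma fibers}(b) the fibre $X_0$ is biholomorphic to $\mathrm{Gr}^+_h(n,n)$, which, as a connected component of a complex projective variety, is a compact connected complex manifold. Hence any holomorphic map from $X_0$ to a finite-dimensional complex vector space $V$ must be constant (apply the maximum modulus principle to each coordinate of $V$ in some basis, or use that $H^0(X_0,\mathcal{O}_{X_0})=\mathbb{C}$). Consequently $\Gamma(X_0,\mathcal{O}(X_0\times V))=V$, with the isomorphism given by $v\mapsto(x\mapsto v)$.

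Applying this with $V=\mathfrak{gr}^\ell_{x_0}$ and $V=gr^\ell_{x_0}$ yields the two claimed canonical isomorphisms $\mathfrak{gr}^\ell_{x_0}\to\Gamma(\mathcal{O}(\mathfrak{S}^\ell N^\ast))$ and $gr^\ell_{x_0}\to\Gamma(\mathcal{O}(S^\ell N^\ast_1))$, and their finite-dimensionality is inherited from that of $\mathfrak{gr}^\ell_{x_0}$ (cf.~\eqref{weighted jets at x_0}). The main (mild) obstacle is simply verifying compactness and connectedness of $X_0$; but both are immediate from Lemma~\ref{lemma fibers}(b), since $\mathrm{Gr}^+_h(n,n)$ is a closed subvariety of a projective Grassmannian and is by construction one of its connected components. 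Everything else in the proof is formal.
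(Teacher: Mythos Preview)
Your proof is correct and follows essentially the same approach as the paper: the paper's argument is the short paragraph immediately preceding the lemma, which observes that $\mathfrak{S}^\ell N^\ast$ is a trivial holomorphic bundle over the compact base $X_0$, so global holomorphic sections are constant $\mathfrak{gr}^\ell_{x_0}$-valued functions, and likewise for $S^\ell N^\ast_1$. You simply spell this out more carefully, in particular making the connectedness of $X_0$ explicit (which the paper leaves implicit but which is of course needed for the constancy conclusion).
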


Let us f\/inish this section by recalling the concept of formal neighborhoods (see \cite{B,WW}). Let $\iota_0\colon X_0\hookrightarrow CS$ be the inclusion. Then $\mathcal{F}_{X_0}:=\iota^{-1}_0\mathcal{I}_{X_0}$ is a sheaf of $\mathcal{O}_{X_0}$-modules whose stalk at $x\in X_0$ is the space of germs of holomorphic functions which are def\/ined on some open neighborhood $\mathcal{V}$ of $x$ in $CS$ and which vanish on $\mathcal{V}\cap X_0$. Let us now view the vector space $\mathcal{F}_{x_0}=\{f\in(\mathcal{O}_M)_{x_0}\colon f(x_0)=0\}$ also as a sheaf over $\{x_0\}$. Then (recall from Lemma~\ref{lemma analytic subvariety X_0}) it is easy to see that $\mathcal{F}_{X_0}=\tau^\ast_0\mathcal{F}_{x_0}$. Observe that $\Gamma(\mathcal{F}_{X_0})$ is the space of equivalence classes of holomorphic functions which are def\/ined on an open neighborhood of $X_0$ in $CS$ where two such functions belong to the same equivalence class if they agree on some possibly smaller open neighborhood of~$X_0$.

The inf\/inite-dimensional vector spaces from (\ref{ses of weighted jets}) form a decreasing f\/iltration $\cdots\subset\mathfrak{F}^{i+1}_{x_0}\subset\mathfrak{F}^{i}_{x_0}\subset\cdots$ of $\mathcal{F}_{x_0}=\mathfrak{F}^0_{x_0}$. Then $\mathfrak{F}^i_{X_0}:=\tau^\ast\mathfrak{F}^i_{x_0}$ is a sheaf of $\mathcal{O}_{X_0}$-modules which is naturally a~sub-sheaf of $\mathcal{F}_{X_0}$. This induces a f\/iltration $\dots\subset\mathfrak{F}^{i+1}_{X_0}\subset\mathfrak{F}^{i}_{X_0}\subset\cdots$ of $\mathcal{F}_{X_0}=\mathfrak{F}^0_{X_0}$. Arguing as in Section \ref{section ideal sheaf}, one can show that for each $i\ge0$ there is a short exact sequence of sheaves
\begin{gather*}
0\rightarrow\mathfrak{F}^{i+1}_{X_0}\rightarrow\mathfrak{F}^i_{X_0}\rightarrow\mathcal{O}\big(\mathfrak{S}^{i+1}N^\ast\big)\rightarrow0
\end{gather*}
and thus, the graded sheaf associated to the f\/iltration $\mathcal{F}_{X_0}$ is isomorphic to $\bigoplus_{i\ge1}\mathcal{O}(\mathfrak{S}^iN^\ast)$. Using the analogy with the classical formal neighborhood, we will call the pair $(X,\mathcal{O}^{(i)}_X)$ where $\mathcal{O}_X^{(i)}:=\iota_0^{-1}\mathcal{O}_{CS}/\mathfrak{F}^{i+1}_{X_0}$ the $i$\textit{-th weighted formal neighborhood of} $X_0$. Notice that the f\/iltration $\{\mathfrak{F}^i_{X_0}\colon i=0,1,2,\dots\}$ descends to a f\/iltration of $\mathcal{O}_X^{(i)}$ and that the associated graded sheaf is isomorphic to $\bigoplus_{j\ge0}^{i+1}\mathcal{O}(\mathfrak{S}^jN^\ast)$.

\subsection[The double complex on the formal neighborhood of $\tau^{-1}(x_0)$]{The double complex on the formal neighborhood of $\boldsymbol{\tau^{-1}(x_0)}$}\label{section formal double complex}
Recall from Section~\ref{section relative BGG sequence} that for each $a\in\mathbb{N}^{k,n}_{++}$ there is a $\mathrm{Q}$-dominant and integral weight~$\lambda_a$, an irreducible $\mathrm{Q}$-module $\mathbb{W}_{\lambda_a}$ with lowest weight $-\lambda_a$ and an associated vector bundle $W_{\lambda_a}^{CS}=\mathrm{G}\times_\mathrm{Q}\mathbb{W}_{\lambda_a}$. We will denote by $W_{\lambda_a}$ the restriction of $W_{\lambda_a}^{CS}$ to $X_0$, by $\mathcal{O}(\lambda_a)$ the sheaf of holomorphic sections of $W_{\lambda_a}$, by $\mathcal{E}^{p,q}$ the sheaf of smooth $(p,q)$-forms over $X_0$ and by $\mathcal{E}^{p,q}(\lambda_a)$ the sheaf of $(p,q)$-forms with values in $W_{\lambda_a}$. If $V$ is another vector bundle over $X_0$, then we denote by $V(\lambda_a)$ the tensor product of~$V$ with~$W_{\lambda_a}$. We will use the notation set in~(\ref{notation weighted jets over origin}) and~(\ref{isom of graded jets over origin}).

\begin{Lemma}\label{lemma les of formal neighborhoods}
There is for each $r:=\ell+j\ge0$ a long exact sequence a vector bundles over $X_0$:
\begin{gather}\label{les of formal neighs I}
\mathfrak{S}^{r}N^\ast(\lambda)\xrightarrow{\mathfrak{d}_0} E^\ast\otimes \mathfrak{S}^{r-1}N^\ast(\lambda)\xrightarrow{\mathfrak{d}_1}\Lambda^2E^\ast\otimes \mathfrak{S}^{r-2}N^\ast(\lambda)\xrightarrow{\mathfrak{d}_2}\cdots.
\end{gather}
This sequence contains a long exact subsequence
\begin{gather}\label{les of formal neighs II}
{S}^{r}N^\ast_1 (\lambda)\xrightarrow{\delta_0}E^\ast\otimes {S}^{r-1}N^\ast_1(\lambda)\xrightarrow{\delta_1}\Lambda^2E^\ast\otimes {S}^{r-2}N^\ast_1(\lambda)\xrightarrow{\delta_2}\cdots.
\end{gather}
\end{Lemma}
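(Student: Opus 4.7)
The plan is to derive both long exact sequences from the short exact sequence $0 \rightarrow G_2 \rightarrow N_1^\ast \rightarrow E^\ast \rightarrow 0$ of (\ref{ses with conormal bdle}) via a classical Koszul complex construction, and then to combine it with the trivial factor $N_2^\ast$ to obtain the full sequence. Exactness will follow from the standard exactness of the Koszul resolution associated to an injection of vector bundles, together with the fact that tensoring with the trivial (hence flat) bundle $S^{\ell_2}N_2^\ast(\lambda)$ preserves exactness.

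For the sub-sequence (\ref{les of formal neighs II}), I would first dualize (\ref{ses with conormal bdle}) to exhibit $E$ as a sub-bundle of $N_1$ with quotient $N_1/E \cong G_2^\ast$. The classical Koszul resolution of $S^r G_2^\ast$ is the exact complex
\begin{gather*}
\cdots \rightarrow \Lambda^2 E \otimes S^{r-2}N_1 \rightarrow E \otimes S^{r-1}N_1 \rightarrow S^r N_1 \rightarrow S^r G_2^\ast \rightarrow 0,
\end{gather*}
whose dualization (and twist by $W_\lambda$) gives the exact sequence
\begin{gather*}
0 \rightarrow S^r G_2(\lambda) \rightarrow S^r N_1^\ast(\lambda) \rightarrow E^\ast \otimes S^{r-1}N_1^\ast(\lambda) \rightarrow \Lambda^2 E^\ast \otimes S^{r-2}N_1^\ast(\lambda) \rightarrow \cdots,
\end{gather*}
whose tail from $S^r N_1^\ast(\lambda)$ onwards is precisely (\ref{les of formal neighs II}). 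The differentials $\delta_\bullet$ are the Koszul co-differentials: on a generator $\omega \otimes f$ with $f \in S^\bullet N_1^\ast$, one polarizes $f$ one step, projects the polarized factor along $N_1^\ast \rightarrow E^\ast$, and wedges the result into $\omega$.

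For the full sequence (\ref{les of formal neighs I}), I would tensor the above exact sequence (in symmetric degree $\ell_1$) with the trivial bundle $S^{\ell_2}N_2^\ast(\lambda)$ and take the direct sum over pairs $(\ell_1, \ell_2)$ with $\ell_1 + 2\ell_2 = r$. Using the decomposition $\mathfrak{S}^{r-p}N^\ast = \bigoplus_{\ell_1 + 2\ell_2 = r-p} S^{\ell_1}N_1^\ast \otimes S^{\ell_2}N_2^\ast$ from (\ref{weighted formal neighborhood}), the $p$-th term of the resulting summed complex is exactly $\Lambda^p E^\ast \otimes \mathfrak{S}^{r-p}N^\ast(\lambda)$, and its exactness is inherited from that of each summand. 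The sub-sequence (\ref{les of formal neighs II}) then appears as the $\ell_2 = 0$ direct summand of (\ref{les of formal neighs I}).

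The main subtlety is confirming that the Koszul differentials produced above coincide with the intrinsic differentials $\mathfrak{d}_\bullet$ one would extract by taking the weighted graded of $\partial_\eta$ along $X_0$. Since $\partial_\eta$ is $\mathrm{G}$-invariant and of weighted order one in the $E^{CS\ast}$-direction by Proposition \ref{thm double complex relative forms}, its weighted principal symbol along the formal neighborhood of $X_0$ must reduce to the surjection $N_1^\ast \rightarrow E^\ast$ acting on one cotangent factor, which is precisely the Koszul generator; unwinding the isomorphisms from Section \ref{section formal neighborhood of X_0} makes this identification explicit. An alternative route is to restrict the long exact sequence of Proposition \ref{thm relative twisted de Rham} to $X_0$ and isolate the sub-sequences indexed by $s_1 = 0$, but the reindexing between the $\mathfrak{q}$- and $\mathfrak{g}$-weighted gradings makes the direct Koszul argument cleaner.
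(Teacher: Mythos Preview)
Your proof is correct but takes a different route from the paper. The paper simply restricts the long exact sub-sequences (\ref{second exact formal complex of twisted relative de Rham}) of Proposition~\ref{thm relative twisted de Rham} to $X_0$, sets $s_1=0$, and sums over the remaining indices $s_0,s_2,s_3$; this is exactly the ``alternative route'' you mention in your last paragraph. Your direct Koszul construction from the short exact sequence $0\to G_2\to N_1^\ast\to E^\ast\to 0$ is more self-contained and sidesteps the reindexing between the $\mathfrak{q}$- and $\mathfrak{g}$-weighted gradings. The paper's route, however, has the advantage that the differentials $\mathfrak{d}_\bullet$ are \emph{by construction} the restrictions of $\mathfrak{gr}\partial_\eta$; this is precisely what is used in the proof of Proposition~\ref{thm first double complex on formal neigh} (to get $[\mathfrak{d},\bar\partial]=0$) and in Lemma~\ref{lemma diff op on graded jets} (to identify the induced maps with $\mathfrak{gr} D_{a'}^a$). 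Your principal-symbol argument for this identification is plausible but informal; if you keep the Koszul approach you should make that step precise, since the downstream arguments depend on it.
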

\begin{proof}
In order to obtain the sequence (\ref{les of formal neighs I}), take the direct sum of all long exact sequences from~(\ref{second exact formal complex of twisted relative de Rham}) indexed by $s_0$, $s_1$, $s_2$ and $s_3$ where $s_0+s_2+2s_3=\ell+j$, $s_1=0$ and restrict it to~$X_0$.
The subsequence~(\ref{les of formal neighs II}) is obtained similarly, we only add one more condition $s_3=0$.
\end{proof}

Recall that each long exact sequence from (\ref{second exact formal complex of twisted relative de Rham}) is induced by the relative twisted de Rham complex by restricting to weighted jets. Hence, also (\ref{les of formal neighs I}) and (\ref{les of formal neighs II}) are naturally induced by this complex.

\begin{Remark}\label{remark sheaves of vector valued forms over X_0 I}
Let $\mathcal{E}^{0,q}(\Lambda^j E^\ast\otimes\mathfrak{S}^\ell N^\ast(\lambda))$ be the sheaf of smooth $(0,q)$-forms with values in the corresponding vector bundle over $X_0$. The vector bundle map $\mathfrak{d}_j$ induces a map of sheaves
\begin{gather}\label{horizontal differential in first double complex with FN}
\mathcal{E}^{0,q}\big(\Lambda^j E^\ast\otimes\mathfrak{S}^\ell N^\ast(\lambda)\big)\rightarrow \mathcal{E}^{0,q}\big(\Lambda^{j+1} E^\ast\otimes\mathfrak{S}^{\ell-1}N^\ast(\lambda)\big),
\end{gather}
which we also denote by $\mathfrak{d}_j$ as there is no risk of confusion.

Recall from (\ref{decomposition of skew symmetric powers}) that $\Lambda^j\mathbb{E}^\ast\otimes\mathbb{W}_\lambda=\bigoplus_{a\in\mathbb{N}^{k,n}_{++}\colon |a|=j}\mathbb{W}_{\lambda_a}$ which gives direct sum decomposition
$\mathcal{E}^{0,q}(\Lambda^j E^\ast\otimes\mathfrak{S}^\ell N^\ast(\lambda))=\bigoplus_{a\in\mathbb{N}^{k,n}_{++}\colon |a|=j}\mathcal{E}^{0,q}(\mathfrak{S}^{\ell}N^\ast(\lambda_a))$.
We see that if $a, a'\in\mathbb{N}^{k,n}_{++}$ are such that $|a|=|a'|-1=j$, then $\mathfrak{d}_j$ induces
\begin{gather}\label{graded differential I}
\mathfrak{d}_{a'}^a\colon \ \mathcal{E}^{0,q}\big(\mathfrak{S}^{\ell}N^\ast(\lambda_a)\big)\rightarrow\mathcal{E}^{0,q}\big(\mathfrak{S}^{\ell-1}N^\ast(\lambda_{a'})\big)
\end{gather}
in the same way $\partial_\eta$ induces in (\ref{dif op in BGG}) the operator $\partial_{a'}^{a}$ in the relative BGG sequence. By Proposition \ref{thm relative bgg sequence}, $\mathfrak{d}_{a'}^{a}=0$ if $a\nless a'$.
\end{Remark}

\begin{Remark}\label{remark sheaves of vector valued forms over X_0 II}
Replacing (\ref{les of formal neighs I}) by (\ref{les of formal neighs II}) in Remark \ref{remark sheaves of vector valued forms over X_0 I}, we get a map of sheaves
\begin{gather}\label{horizontal differential in second double complex with FN}
\delta_j\colon \ \mathcal{E}^{0,q}\big(\Lambda^j E^\ast\otimes S^\ell N_1^\ast(\lambda)\big)\rightarrow \mathcal{E}^{0,q}\big(\Lambda^{j+1} E^\ast\otimes S^{\ell-1}N_1^\ast(\lambda)\big).
\end{gather}
 If $a,a'$ are as above, then there is a map
\begin{gather*}
\delta_{a'}^a\colon \ \mathcal{E}^{0,q}\big(S^{\ell}N^\ast_1(\lambda_a)\big)\rightarrow\mathcal{E}^{0,q}\big(S^{\ell-1}N^\ast_1(\lambda_{a'})\big),
\end{gather*}
which is induced in the same way $\mathfrak{d}_j$ induces $\mathfrak{d}_{a'}^a$.
\end{Remark}

Even though the proof of Lemma \ref{lemma sheaf cohomology groups over origin} is trivial, it will be crucial later on.

\begin{Lemma}\label{lemma sheaf cohomology groups over origin}
Let $a\in\mathbb{N}^{k,n}_{++}$. Then
\begin{gather}\label{first isom}
(\tau_0)^q_\ast\big(\mathcal{O}\big(\mathfrak{S}^\ell N^\ast (\lambda_a)\big)\big)=H^q\big(X_0,\mathcal{O}\big(\mathfrak{S}^\ell N^\ast (\lambda_a)\big)\big)=
\begin{cases}
\mathfrak{gr}^{\ell}\mathbb{V}_{\mu_a}, \\
\{0\}
\end{cases}
\end{gather}
and
\begin{gather}\label{second isom}
(\tau_0)^q_\ast\big(\mathcal{O}({S}^\ell N^\ast_1(\lambda_a))\big)=H^q\big(X_0,\mathcal{O}({S}^\ell N_1^\ast (\lambda_a))\big)=
\begin{cases}
gr^\ell\mathbb{V}_{\mu_a}, \\
\{0\},
\end{cases}
\end{gather}
where\footnote{As above, we identify a sheaf over $\{x_0\}$ with its stalk.} in \eqref{first isom} and \eqref{second isom} the first possibility holds if and only if $a\in S^k$ and $q=\ell(a)$.
\end{Lemma}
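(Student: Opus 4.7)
The plan is to reduce both identities in \eqref{first isom} and \eqref{second isom} to the global direct image computation already recorded in Proposition \ref{thm direct images}, by pulling the finite-dimensional coefficient spaces $\mathfrak{gr}^\ell_{x_0}$ and $gr^\ell_{x_0}$ out of the cohomology. The first equality in each line is automatic: since $\tau_0$ maps $X_0$ to the one-point space $\{x_0\}$, every higher direct image $(\tau_0)^q_\ast \mathcal{G}$ is, by definition, the global sheaf cohomology $H^q(X_0,\mathcal{G})$.

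Next I would invoke Lemma \ref{lemma global hol sections over X_0}, which says that $\mathfrak{S}^\ell N^\ast$ and $S^\ell N^\ast_1$ are trivial holomorphic vector bundles over $X_0$, with spaces of global sections canonically identified with $\mathfrak{gr}^\ell_{x_0}$ and $gr^\ell_{x_0}$, respectively. Consequently there are canonical isomorphisms of sheaves of $\mathcal{O}_{X_0}$-modules
\[
\mathcal{O}\big(\mathfrak{S}^\ell N^\ast(\lambda_a)\big)\cong \mathfrak{gr}^\ell_{x_0}\otimes_\mathbb{C}\mathcal{O}(\lambda_a),\qquad \mathcal{O}\big({S}^\ell N^\ast_1(\lambda_a)\big)\cong gr^\ell_{x_0}\otimes_\mathbb{C}\mathcal{O}(\lambda_a).
\]
Because the two tensor factors are finite-dimensional vector spaces, they commute with taking cohomology, giving
\[
H^q\big(X_0,\mathcal{O}(\mathfrak{S}^\ell N^\ast(\lambda_a))\big)\cong \mathfrak{gr}^\ell_{x_0}\otimes H^q\big(X_0,\mathcal{O}(\lambda_a)\big),
\]
and analogously for the $S^\ell N^\ast_1$ variant.

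It therefore remains to pin down the ``untwisted'' cohomology $H^q(X_0,\mathcal{O}(\lambda_a))$. The map $\tau\colon CS\to M$ is a proper holomorphic submersion and Proposition \ref{thm direct images} asserts that each coherent higher direct image $\tau^q_\ast\mathcal{O}_\mathfrak{q}(\lambda_a)$ is either zero or the $\mathrm{G}$-homogeneous (hence locally free) sheaf $\mathcal{O}_\mathfrak{p}(\mu_a)$, the nonvanishing occurring precisely when $a\in S^k$ and $q=\ell(a)$. Since the ranks are constant along $M$, Grauert's base change theorem applies and identifies $H^q(X_0,\mathcal{O}(\lambda_a))\cong (\tau^q_\ast\mathcal{O}_\mathfrak{q}(\lambda_a))_{x_0}\otimes_{\mathcal{O}_{M,x_0}}\mathbb{C}$, which is $\mathbb{V}_{\mu_a}$ in the good case and $\{0\}$ otherwise. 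Combining this with the previous paragraph and with the canonical isomorphisms $\mathfrak{gr}^\ell_{x_0}\otimes\mathbb{V}_{\mu_a}\cong\mathfrak{gr}^\ell\mathbb{V}_{\mu_a}$ and $gr^\ell_{x_0}\otimes\mathbb{V}_{\mu_a}\cong gr^\ell\mathbb{V}_{\mu_a}$ supplied by \eqref{notation weighted jets over origin} and \eqref{isom of graded jets over origin} yields both displayed formulas.

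The only non-formal step is the base-change identification of $H^q(X_0,\mathcal{O}(\lambda_a))$, but this is standard precisely because Proposition \ref{thm direct images} already guarantees that the higher direct images are locally free of constant rank, so no semicontinuity jumps can occur. Alternatively one could sidestep base change and compute $H^q(X_0,\mathcal{O}(\lambda_a))$ directly via the Kostant version of the Bott--Borel--Weil theorem applied to the flag variety $X_0\cong\mathrm{P}/\mathrm{Q}$ of the Levi subgroup of $\mathrm{P}$; this merely re-packages the input of Proposition \ref{thm direct images} at the single fiber, and the two answers agree by construction.
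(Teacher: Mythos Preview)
Your argument is correct and follows essentially the same route as the paper: both reduce to pulling the finite-dimensional factor $\mathfrak{gr}^\ell_{x_0}$ (resp.\ $gr^\ell_{x_0}$) out of the cohomology via Lemma~\ref{lemma global hol sections over X_0}, and then identify $H^q(X_0,\mathcal{O}(\lambda_a))$. The only difference is that the paper invokes the Bott--Borel--Weil theorem directly for this last step (which you mention as your alternative), whereas your primary route goes through Proposition~\ref{thm direct images} plus Grauert base change; both are valid, but the direct BBW appeal is shorter and avoids the detour through the global direct images on~$M$.
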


\begin{proof}The f\/irst equality in (\ref{first isom}) is just the def\/inition of $(\tau_0)_\ast^q$. The sheaf cohomology group in the middle is equal to the cohomology of the Dolbeault complex. In view of Lemma~\ref{lemma global hol sections over X_0}, $\Gamma(\mathcal{E}^{0,q}(\mathfrak{S}^\ell N^\ast(\lambda_a)))\cong\mathfrak{gr}^{\ell}_{x_0}\otimes\Gamma(\mathcal{E}^{0,q}(\lambda_a))$ and thus, the sheaf cohomology group is isomorphic to $\mathfrak{gr}^{\ell}_{x_0}\otimes H^q(X_0,\mathcal{O}(\lambda_a))$. By the Bott--Borel--Weil theorem, $H^q(X_0,\mathcal{O}(\lambda_a))\cong\mathbb{V}_{\mu_a}$ if $a\in S^k$, $q=\ell(a)$ and vanishes otherwise. The second equality in~(\ref{first isom}) then follows from the isomorphism $\mathfrak{gr}^{\ell}_{x_0}\otimes\mathbb{V}_{\mu_a}\rightarrow\mathfrak{gr}^\ell\mathbb{V}_{\mu_a}$ from (\ref{isom of graded jets over origin}).

The isomorphism in (\ref{second isom}) is proved similarly. We only use the other isomorphism
\begin{gather*} \Gamma\big(\mathcal{O}\big({S}^\ell N^\ast_1\big)\big)\rightarrow gr^\ell_{x_0}
\end{gather*} from Lemma \ref{lemma global hol sections over X_0} and the isomorphism $gr^{\ell}_{x_0}\otimes\mathbb{V}_{\mu_a}\rightarrow gr^\ell\mathbb{V}_{\mu_a}$.
\end{proof}

There is for each non-negative integer a certain double complex whose horizontal dif\/ferential is~(\ref{horizontal differential in first double complex with FN}) and the vertical dif\/ferential is (up to sign) the Dolbeault dif\/ferential. This is the double complex from Proposition \ref{thm double complex I} restricted to the weighted formal neighborhood of $X_0$.

\begin{Proposition}\label{thm first double complex on formal neigh}
Let $r\ge0$ be an integer. Then there is a double complex $(\mathfrak{E}^{p,q}(r),d',d'')$ where:
\begin{itemize}\itemsep=0pt
 \item $\mathfrak{E}^{p,q}(r)=\Gamma(\mathcal{E}^{0,q}(\Lambda^p E^\ast\otimes\mathfrak{S}^{r-p}N^\ast(\lambda)))$,
 \item the vertical differential $d'$ is $(-1)^p\bar\partial$ where $\bar\partial$ is the standard Dolbeault differential and
 \item the horizontal differential $d''$ is $\mathfrak{d}_p$ from \eqref{horizontal differential in first double complex with FN}.
\end{itemize}
Moreover, we claim that:
\begin{enumerate}\itemsep=0pt
 \item [$(i)$] $H^j(T^\ast(r),d'+d'')=0$ if $j>{n\choose2}$ where $T^i(r):=\bigoplus_{p+q=i}\mathfrak{E}^{p,q}(r)$;
\item [$(ii)$] the first page of the spectral sequence associated to the filtration by columns is
\begin{gather*}
 \mathfrak{E}^{p,q}_1(r)=\bigoplus_{a\in S^k\colon |a|=p,\ \ell(a)=q}\mathfrak{gr}^{r-p}\mathbb{V}_{\mu_a};
\end{gather*}
\item [$(iii)$] the spectral sequence degenerates on the second page.
\end{enumerate}
\end{Proposition}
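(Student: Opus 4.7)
The plan is to analyze the double complex $(\mathfrak{E}^{p,q}(r),d',d'')$ through its two associated spectral sequences. Before running them I would first verify the double complex axioms: $(d')^2=0$ from $\bar\partial^2=0$; $(d'')^2=\mathfrak{d}_{p+1}\mathfrak{d}_p=0$ from the fact that (\ref{les of formal neighs I}) is a complex (indeed a long exact sequence, by Lemma \ref{lemma les of formal neighborhoods}); and the graded commutation $d'd''+d''d'=0$ from the sign $(-1)^p$ in $d'$ combined with the $\mathcal{E}^\infty$-linearity of the vector bundle maps $\mathfrak{d}_p$, which forces $\bar\partial$ and $\mathfrak{d}_p$ to commute.

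For (i), I would run the spectral sequence with $d_0=d''$. Because $X_0$ is paracompact and $\mathcal{E}^{0,q}$ is a fine sheaf, the functor $\Gamma(\mathcal{E}^{0,q}(-))$ is exact on short exact sequences of holomorphic vector bundles. Hence the exactness of (\ref{les of formal neighs I}) at every $p\ge 1$, together with the identification of the kernel at position $p=0$ as a specific trivial bundle $K$ coming from Proposition \ref{thm relative twisted de Rham}(iii) restricted to $X_0$, yields $E_1^{0,q}=\Gamma(\mathcal{E}^{0,q}(K))$ and $E_1^{p,q}=0$ for $p\ge 1$. The only remaining differential $d_1=\bar\partial$ then produces $E_2^{0,q}=H^q(X_0,\mathcal{O}(K))$, and since only column zero survives, the spectral sequence collapses, so $H^j(T^\ast(r),d'+d'')\cong H^j(X_0,\mathcal{O}(K))$. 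Since $\dim_\mathbb{C} X_0=\binom{n}{2}$, the Dolbeault resolution on $X_0$ has length $\binom{n}{2}$, and this cohomology vanishes for $j>\binom{n}{2}$.

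For (ii), I would run the other spectral sequence, with $d_0=d'=(-1)^p\bar\partial$. Each column is (up to sign) the Dolbeault complex of the holomorphic vector bundle $\Lambda^p E^\ast\otimes \mathfrak{S}^{r-p}N^\ast(\lambda)$ on $X_0$, so $E_1^{p,q}=H^q(X_0,\mathcal{O}(\Lambda^p E^\ast\otimes \mathfrak{S}^{r-p}N^\ast(\lambda)))$. Using the branching decomposition $\Lambda^p\mathbb{E}^\ast\otimes\mathbb{W}_\lambda=\bigoplus_{|a|=p}\mathbb{W}_{\lambda_a}$ from Proposition \ref{thm relative bgg sequence} and applying Lemma \ref{lemma sheaf cohomology groups over origin} termwise immediately gives the formula asserted in (ii).

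For (iii), both spectral sequences converge to the same total cohomology, which step (i) has identified as $H^\ast(X_0,\mathcal{O}(K))$. The plan is a dimension count: compute $\sum_{p+q=j}\dim_\mathbb{C} E_2^{p,q}$ of the vertical-first spectral sequence from the first-page data of (ii) together with an explicit description of $d_1$, which is induced on cohomology by $\mathfrak{d}_p$ and corresponds to the first-order symbols of the operators $D_{a'}^a$ from Lemma \ref{lemma diff op} (the case $|a'|=|a|+1$). Then verify that this total equals $\dim_\mathbb{C} H^j(X_0,\mathcal{O}(K))$. Since $E_\infty$ is a subquotient of $E_2$ and $\sum_{p+q=j}\dim E_\infty^{p,q}=\dim H^j(T^\ast(r))$, equality of totals on each diagonal forces $E_2=E_\infty$ and hence $d_r=0$ for $r\ge 2$. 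The main obstacle will be this dimension count: it requires pinning down $d_1$ concretely enough to compute $E_2$ as Bott--Borel--Weil representations, and then matching the result with the cohomology of the decomposition of $K$. As a backup, $\mathrm{G}$-equivariance of the whole double complex restricts possible targets of the higher differentials $d_r$, and this representation-theoretic filter may rule out nonzero $d_r$ for bidegree reasons.
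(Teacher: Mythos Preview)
Your setup of the double complex and your arguments for (i) and (ii) are correct and essentially what the paper does (for (i) the paper is terser, but the content is the same row-filtration argument with the kernel bundle at $p=0$ and the bound $\dim_{\mathbb C}X_0=\binom{n}{2}$). The real divergence is (iii): your dimension count is a valid strategy in principle, but you correctly flag it as the hard step, and in fact the paper bypasses it completely with a short combinatorial observation about the \emph{positions} of the nonzero terms on $E_1$.

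Here is the idea you are missing. On the $E_1$ page from (ii) the summand $\mathfrak{gr}^{r-|a|}\mathbb V_{\mu_a}$ sits at bidegree $(|a|,\ell(a))$, hence on the diagonal $|a|+\ell(a)=d(a)+2q(a)+\binom{n}{2}-q(a)=r(a)+\binom{n}{2}$. The key point is that every higher differential $d_i$ respects the partial order on partitions: $d'=\pm\bar\partial$ preserves the decomposition by $a$, while $d''=\mathfrak d$ is the sum of the maps $\mathfrak d^a_{a'}$ with $a<a'$ (Proposition~\ref{thm relative bgg sequence}), so every zigzag in the staircase construction of $d_i$ passes only through pieces indexed by partitions strictly above $a$. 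Thus a nonzero component of $d_i$ from the $a$-piece to an $a'$-piece (both in $S^k$, on adjacent diagonals so $r(a')=r(a)+1$) forces $a<a'$. But $a<a'$ gives $d(a)\le d(a')$ and $q(a)\le q(a')$; combined with $r(a')-r(a)=1$ this pins down $q(a')-q(a)\in\{0,1\}$, i.e.\ $|a'|-|a|\in\{1,2\}$. So the only admissible bidegree shifts on adjacent diagonals are $(1,0)$ and $(2,-1)$, which are exactly $d_1$ and $d_2$; there are simply no valid targets for $d_i$ with $i\ge3$. This proves degeneration at $E_2$ with no dimension count, no explicit description of $K$, and no need to compute $d_1$ on cohomology.
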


\begin{proof}Recall from the proof of Proposition \ref{thm relative de Rham} that $\mathfrak{gr}\partial_\eta$ is induced from $\partial_\eta$ by passing to weighted jets (as explained at the end of Section~\ref{section wdo}) and, see Lemma~\ref{lemma les of formal neighborhoods}, that $\mathfrak{d}=\mathfrak{d}_p$ is the restriction of the map $\mathfrak{gr}\partial_\eta$ to the sub-complex~(\ref{les of formal neighs I}). Since $[\partial_\eta,\bar\partial]=0$, we have that $[\mathfrak{d},\bar\partial]=0$ and thus also $d'd''=-d''d'$. This shows the f\/irst claim.

(i) The rows of the double complex are exact as the sequence~(\ref{les of formal neighs I}) is exact. Since $\dim X_0={n\choose2}$, it follows that $\mathfrak{E}^{p,q}_1(r)=0$ whenever $q>{n\choose2}$. This proves the claim.

(ii) By def\/inition, $\mathfrak{E}^{p,q}_1(r)$ is the $d'$-cohomology group in the $p$-th row and $q$-th column. The claim then follows from the direct sum decomposition from Remark~\ref{remark sheaves of vector valued forms over X_0 I} and Lemma~\ref{lemma sheaf cohomology groups over origin}.

(iii) The space $\mathfrak{gr}^{r-p}\mathbb{V}_{\mu_a}$ lives on the $|a|$-th vertical line and $\ell(a)=({n\choose 2}-q(a))$-th horizontal line of the f\/irst page of the spectral sequence and thus, on the $(|a|+{n\choose2}-q(a))=(2q(a)+d(a)+{n\choose2}-q(a))=(r(a)+{n\choose2})$-th diagonal. Choose $a'\in S^k$ such that $\mathfrak{gr}^{r-|a'|}\mathbb{V}_{\mu_{a'}}$ lives on the next diagonal and $a<a'$. This means that $r(a')=r(a)+1$ and so $q(a)=q(a')$ or $q(a')=q(a)+1$. In the f\/irst case, $\mathfrak{gr}^{r-|a'|}\mathbb{V}_{\mu_{a'}}$ lives on the $\ell(a)$-th row. In the second case, it lives on the $(\ell(a)-1)$-th row. As $\mathfrak{d}_{a'}^a=0$ if $a\nless a'$, it follows from def\/inition that the dif\/ferential on the $i$-th page is zero if $i>2$.
\end{proof}

If we use the exactness of (\ref{les of formal neighs II}) instead of (\ref{les of formal neighs I}) and use the isomorphism (\ref{second isom}) instead of~(\ref{first isom}), the proof of Proposition~\ref{thm first double complex on formal neigh} gives the following.

\begin{Proposition}\label{thm second double complex on formal neigh}
The double complex from Proposition {\rm \ref{thm first double complex on formal neigh}} contains a double complex \linebreak $(F^{p,q}(r),d',d'')$ where 	 $F^{p,q}(r):=\Gamma(\mathcal{E}^{0,q}(\Lambda^pE^\ast\otimes{S}^{r-p}N^\ast_1(\lambda)))$.
Moreover we claim that:
\begin{enumerate}\itemsep=0pt
 \item [$(i)$] $H^j(T^\ast(r),d'+d'')=0$ if $j>{n\choose2}$ where $T^i(r):=\bigoplus_{p+q=i}F^{p,q}(r)$;
\item [$(ii)$]
the first page of the spectral sequence associated to the filtration by columns is
\begin{gather*}
F^{p,q}_1(r):=\bigoplus_{a\in S^k\colon  |a|=p,\ \ell(a)=q}gr^{r-p}\mathbb{V}_{\mu_a};
\end{gather*}
\item [$(iii)$] the spectral sequence degenerates on the second page.
\end{enumerate}
\end{Proposition}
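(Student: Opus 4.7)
The plan is to follow the proof of Proposition \ref{thm first double complex on formal neigh} essentially verbatim, substituting the exactness of (\ref{les of formal neighs II}) for (\ref{les of formal neighs I}) and the isomorphism (\ref{second isom}) for (\ref{first isom}).

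First I would check that $(F^{p,q}(r), d', d'')$ is a well-defined sub-double-complex of $(\mathfrak{E}^{p,q}(r), d', d'')$. The inclusion $S^\ell N_1^\ast \hookrightarrow \mathfrak{S}^\ell N^\ast$ comes from the $\ell_2=0$ summand in the decomposition (\ref{weighted formal neighborhood}), and by Remarks \ref{remark sheaves of vector valued forms over X_0 I}, \ref{remark sheaves of vector valued forms over X_0 II} together with Lemma \ref{lemma les of formal neighborhoods}, the horizontal map $\delta_p$ is the restriction of $\mathfrak{d}_p$ to this sub-bundle. The vertical differential $(-1)^p\bar\partial$ manifestly respects the sub-bundle, and $[d',d''] = 0$ is inherited from the ambient double complex.

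For $(i)$ I would argue that the rows of $(F^{\bullet,q}(r), d'')$ are exact: Lemma \ref{lemma les of formal neighborhoods} gives exactness of the sequence of vector bundles (\ref{les of formal neighs II}), tensoring with the locally free sheaf $\mathcal{E}^{0,q}$ preserves exactness, and fine sheaves are $\Gamma$-acyclic, so passing to global sections preserves exactness. Since $\dim_\mathbb{C} X_0 = {n\choose 2}$, one has $F^{p,q}(r) = 0$ for $q > {n\choose 2}$, hence $F^{p,q}_1(r) = 0$ in that range, which by convergence of the column spectral sequence forces $H^j(T^\ast(r), d'+d'') = 0$ for $j > {n\choose 2}$. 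For $(ii)$ the first page is the $\bar\partial$-cohomology of each column; by the Dolbeault theorem this is $H^q(X_0, \mathcal{O}(\Lambda^p E^\ast \otimes S^{r-p}N_1^\ast(\lambda)))$, which decomposes according to (\ref{decomposition of skew symmetric powers}) into $\bigoplus_{a \in \mathbb{N}^{k,n}_{++}:|a|=p} H^q(X_0, \mathcal{O}(S^{r-p}N_1^\ast(\lambda_a)))$. Lemma \ref{lemma sheaf cohomology groups over origin}, equation (\ref{second isom}), identifies the surviving summands with $gr^{r-p}\mathbb{V}_{\mu_a}$ for $a \in S^k$ with $\ell(a) = q$, which matches the claim.

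For $(iii)$ the page-one differential acts by the maps $\delta^a_{a'}$, which vanish when $a \nless a'$ (the analogue of Proposition \ref{thm relative bgg sequence} for $\delta$). The combinatorial placement argument from the proof of Proposition \ref{thm first double complex on formal neigh}$(iii)$ now transfers unchanged: $gr^{r-|a|}\mathbb{V}_{\mu_a}$ lies on the diagonal of total degree $r(a)+{n\choose 2}$, and for symmetric partitions $a<a'$ with $r(a')=r(a)+1$ one must add either a single diagonal box or a symmetric pair of off-diagonal boxes, so $|a'|-|a|\in\{1,2\}$. Hence the differential on the $i$-th page can be nonzero only for $i\in\{1,2\}$, and the sequence degenerates on the second page. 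The only real obstacle is bookkeeping: checking that the restricted horizontal differential $\delta_p$ is still compatible with $\bar\partial$, which follows immediately from the corresponding identity in the ambient complex; all the analytic and representation-theoretic content has already been packaged into Lemmas \ref{lemma les of formal neighborhoods} and \ref{lemma sheaf cohomology groups over origin}.
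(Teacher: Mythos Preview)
Your approach is precisely the paper's: the proof there consists of a single sentence directing the reader to rerun the argument of Proposition~\ref{thm first double complex on formal neigh} with (\ref{les of formal neighs II}) in place of (\ref{les of formal neighs I}) and (\ref{second isom}) in place of (\ref{first isom}), and you have carried this out in full.

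One small slip in your argument for~(i): the vanishing $F^{p,q}_1(r)=0$ for $q>{n\choose 2}$ does \emph{not}, via convergence of the \emph{column} spectral sequence, force $H^j(T^\ast(r),d'+d'')=0$ for all $j>{n\choose 2}$, since the diagonal $p+q=j$ still meets entries with $q\le{n\choose 2}$ and $p>0$, and by~(ii) these are generally nonzero. The vanishing of total cohomology comes instead from the \emph{row} filtration: exactness of each row (your first sentence) concentrates the row-filtration $E_1$ page in the single column $p=0$, whose entries are $\Gamma(\mathcal{E}^{0,q}(\ker\delta_0))$, and these vanish once $q>\dim_\mathbb{C} X_0={n\choose 2}$. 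You have both ingredients; only the spectral sequence named in the final clause needs to be swapped.
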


\subsection{Long exact sequence of weighted jets}\label{section les of weighted jets}
Let $a\in S^k$ and $\mathbb{V}_{\mu_a}$ be an irreducible $\mathrm{P}$-module with lowest weight $-\mu_a$, see Proposition \ref{thm direct images}.
Now we are ready to show that the linear operators def\/ined in Lemma \ref{lemma diff op} are dif\/ferential operators and we give an upper bound on their weighted order.

\begin{Lemma}\label{lemma diff op on graded jets}
Let $a,a'\in S^k$ be such that $a<a'$ and $ r(a')=r(a)+1$. Then the operator $D^a_{a'}$ from Lemma~{\rm \ref{lemma diff op}} is a differential operator of weighted order at most $s:=|a'|-|a|$.

Hence, $D^{a}_{a'}$ induces for each $i\ge 0$ a linear map
\begin{gather}\label{map of weighted jets I}
\mathfrak{gr} D_{a'}^a\colon \ \mathfrak{gr}^i\mathbb{V}_{\mu_a}\rightarrow\mathfrak{gr}^{i-s}\mathbb{V}_{\mu_{a'}},
\end{gather}
which restricts to a linear map
\begin{gather}\label{map of weighted jets II}
gr D_{a'}^a\colon \ gr^i\mathbb{V}_{\mu_a}\rightarrow gr^{i-s}\mathbb{V}_{\mu_{a'}}.
\end{gather}
\end{Lemma}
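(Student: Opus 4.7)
The plan is to analyze the zigzag defining $D^a_{a'}$ in Lemma~\ref{lemma diff op} modulo the weighted formal neighborhood of $X_0 = \tau^{-1}(x_0)$. By $\mathrm{G}$-invariance of $D^a_{a'}$, it is enough to verify the weighted order bound at the origin $x_0$. Since $\partial_\eta$ is a differential operator of weighted order one by Proposition~\ref{thm double complex relative forms}(ii), and $D^a_{a'}$ is built from $s = |a'|-|a|$ applications of $\partial_\eta$ connected by $\bar\partial$-primitives (one application when $s=1$; the zigzag $\partial_\eta \to \bar\partial\text{-primitive} \to \partial_\eta$ when $s=2$), and since $\bar\partial$ preserves the weighted-jet filtration $\{\mathfrak{F}^i_{X_0}\}$, it is natural to expect weighted order at most $s$. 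The main task is to turn this heuristic into a proof by tracking the filtration through the zigzag.

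Concretely, suppose $s' \in \Gamma(\mathcal{U},\mathcal{O}_\mathfrak{p}(\mu_a))$ satisfies $\mathfrak{j}^s_{x_0}(s') = 0$, i.e.\ $s' \in \mathfrak{F}^{s+1}_{x_0}\mathcal{O}_\mathfrak{p}(\mu_a)$. Via the isomorphism of Lemma~\ref{lemma sections over stein set}, one can choose a Dolbeault representative $\alpha \in \mathcal{E}^{0,\ell(a)}_\mathfrak{q}(\tau^{-1}(\mathcal{U}),\lambda_a)$ whose germ near $X_0$ lies in $\mathfrak{F}^{s+1}_{X_0}\cdot\mathcal{E}^{0,\ell(a)}(\lambda_a)$. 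Running the zigzag of Lemma~\ref{lemma diff op}, each application of $\partial_\eta$ drops the filtration index by at most one, while the antiderivatives $\beta,\gamma$ (with $(-1)^p\bar\partial\beta = \partial_b^a\alpha$, etc.) can be chosen in the same filtration level as $\partial_\eta\alpha$. Hence after $s$ applications of $\partial_\eta$ the representative of $D^a_{a'}(s')$ lies in $\mathfrak{F}^1_{X_0}$, so its restriction to $X_0$ vanishes and therefore $D^a_{a'}(s')(x_0) = 0$.

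Once the weighted-order bound is established, the induced map~\eqref{map of weighted jets I} exists by the general theory recalled at the end of Section~\ref{section wdo}. For the refined statement~\eqref{map of weighted jets II} one needs that the image of $gr^i\mathbb{V}_{\mu_a} \subset \mathfrak{gr}^i\mathbb{V}_{\mu_a}$ is contained in $gr^{i-s}\mathbb{V}_{\mu_{a'}}$; this follows by repeating the tracking argument inside the sub-double-complex $(F^{\bullet,\bullet}(i), d', d'')$ from Proposition~\ref{thm second double complex on formal neigh} in place of $(\mathfrak{E}^{\bullet,\bullet}(i), d', d'')$, using the identification $S^\ell N_1^\ast \cong \tau_0^\ast gr^\ell_{x_0}$ and the analogous $E_1$-computation in terms of $gr^\ell\mathbb{V}_{\mu_a}$.

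The main technical obstacle is the filtration-compatible choice of the $\bar\partial$-antiderivatives $\beta,\gamma$: a priori these exist only by the local exactness of $\bar\partial$ on fine sheaves, with no control over their filtration level. To circumvent this, I would invoke the exact sequences~\eqref{les of formal neighs I} and~\eqref{les of formal neighs II} from Lemma~\ref{lemma les of formal neighborhoods} to solve $\bar\partial$ on each graded piece $\mathcal{O}(\mathfrak{S}^{i}N^\ast(\lambda_a))$ separately and then lift successively through the filtration, an inductive scheme that works precisely because the spectral sequences of Propositions~\ref{thm first double complex on formal neigh} and~\ref{thm second double complex on formal neigh} degenerate at the second page and the requisite cohomology groups over $X_0$ are concentrated in the degrees identified in Lemma~\ref{lemma sheaf cohomology groups over origin}.
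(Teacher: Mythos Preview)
Your approach is correct and closely parallels the paper's, with one organizational difference worth noting. Where you track the weighted filtration $\{\mathfrak{F}^i_{X_0}\}$ through the global zigzag and then pass to graded pieces to find filtration-compatible $\bar\partial$-primitives, the paper instead splits the filtration up front: a choice of Weyl structure gives isomorphisms
\[
\mathfrak{J}^i\mathbb{V}_{\mu_a}\;\cong\;\bigoplus_{j=0}^i\mathfrak{gr}^j\mathbb{V}_{\mu_a}\;\cong\;\bigoplus_{j=0}^i H^{\ell(a)}\big(X_0,\mathfrak{S}^jN^\ast(\lambda_a)\big),
\]
so that the Taylor expansion of $v$ at $x_0$ is a sequence of classes $[v_j]$, and the zigzag is run directly inside the double complex $(\mathfrak{E}^{p,q}(r),d',d'')$ on the formal neighborhood (Proposition~\ref{thm first double complex on formal neigh}). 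This bypasses your step of choosing a global Dolbeault representative $\alpha$ lying in $\mathfrak{F}^{s+1}_{X_0}$, a step you assert but do not justify; the paper's route makes this unnecessary since each $v_j$ already lives in a fixed graded piece and $\mathfrak{d}^a_{a'}$ shifts the index by exactly one.

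One small correction to your last paragraph: the reason the $\bar\partial$-primitives $\beta,\gamma$ exist in the correct graded piece $\mathfrak{S}^{i-1}N^\ast(\lambda_b\oplus\lambda_c)$ is simply that $b,c\notin S^k$, so $H^q\big(X_0,\mathcal{O}(\mathfrak{S}^\ell N^\ast(\lambda_b))\big)=0$ for all $q$ by Lemma~\ref{lemma sheaf cohomology groups over origin}. The degeneration of the spectral sequence at the second page is not needed here; it enters only later, in the proof of formal exactness (Theorem~\ref{theorem formal exactness}).
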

\begin{proof}
Let us make a few preliminary observations.
Let $v\in\mathcal{O}_\mathfrak{p}(\mu_a)_{x_0}$. By the $\mathrm{G}$-invariance of $D_{a'}^a$, it is obviously enough to show that $(D_a^{a'}v)(x_0)$ depends only on $\mathfrak{j}_{x_0}^sv$. We may assume that $v$ is def\/ined on the Stein set $\mathcal{U}$ from Section~\ref{section complex} and so we can view $v$ as a cohomology class $[\alpha]=H^{\ell(a)}(\tau^{-1}(\mathcal{U}),\mathcal{O}_\mathfrak{q}(\lambda_a))$.
A choice of Weyl structure (see~\cite{CS}) and the isomorphisms~(\ref{first isom}) give for each integer $i\ge0$ isomorphisms
\begin{gather*}
 \mathfrak{J}^i\mathbb{V}_{\mu_a}\rightarrow\bigoplus_{j=0}^i\mathfrak{gr}^j\mathbb{V}_{\mu_a}\rightarrow \bigoplus_{j=0}^iH^{\ell(a)}\big(X_0,\mathfrak{S}^jN^\ast(\lambda_a)\big).
\end{gather*}
Hence, the Taylor series of $v$ at $x_0$ determines an inf\/inite\footnote{We will at this point avoid discussion about the convergence of the sum as we will not need it.}
 sum $\sum\limits_{j=0}^\infty [v_j]$ where each $[v_j]$ belong to $H^{\ell(a)}(X_0,\mathfrak{S}^j(\lambda_a))$.

Now we can proceed with the proof. By assumption, $s\in\{1,2\}$. If $s=1$, then $\ell(a)=\ell(a')$. By def\/inition, $D_{a'}^av$ corresponds to $[\partial_{a'}^a\alpha]\in H^{\ell(a')}(\tau^{-1}(U),\mathcal{O}_\mathfrak{q}(\lambda_{a'}))$ and $\mathfrak{j}^i_{x_0}(D_{a'}^av)$ can be viewed as $\sum\limits_{j=0}^i[(\mathfrak{d}_{a'}^{a}) v_{j+1}]$. But since $[\mathfrak{d}_{a'}^a (v_j)]\in H^{\ell(a)}(X_0,\mathfrak{S}^{j-1}(\lambda_a))$, it is clear that $D_{a'}^a(v)(x_0)=0$ if $\mathfrak{j}^1_{x_0}v=0$. This completes the proof when $s=1$.

Notice that the linear map $\mathfrak{gr} D_{a'}^a$ f\/its into a commutative diagram
\begin{gather}\label{com diagram with delta}\begin{split}
\xymatrix{
\Gamma(\mathcal{E}^{0,\ell(a)}\big(\mathfrak{S}^iN^\ast(\lambda_a))\big)\cap\ker\bar\partial\ar[d]\ar[r]^{\mathfrak{d}_{a'}^a}& \Gamma\big(\mathcal{E}^{0,\ell(a')}\big(\mathfrak{S}^{i-1}N^\ast(\lambda_{a'})\big)\big)\cap\ker\bar\partial\ar[d]\\
H^{\ell(a)}\big(X_0,\mathfrak{S}^iN^\ast(\lambda_a)\big)\ar[r]&H^{\ell(a')}\big(X_0,\mathfrak{S}^{i-1}N^\ast(\lambda_{a'})\big)\\
\mathfrak{gr}^i\mathbb{V}_{\mu_a}\ar[u]\ar[r]^{\mathfrak{gr} D_{a'}^a}&\mathfrak{gr}^{i-1}\mathbb{V}_{\mu_{a'},}\ar[u] }\end{split}
\end{gather}
where the lower vertical arrows are the isomorphisms from Lemma~\ref{lemma sheaf cohomology groups over origin}, the upper vertical arrows are the canonical projections and the map $\mathfrak{d}_{a'}^a$ is the one from~(\ref{graded differential I}).

Let us now assume $s=2$. In view of the diagram (\ref{def of second order operator}), we have to replace in (\ref{com diagram with delta}) the map~$\mathfrak{d}_{a'}^a$ by the diagram
\begin{gather*}
\xymatrix{\Gamma(\mathcal{E}^{0,q}(\mathfrak{S}^iN^\ast(\lambda_a)))\cap \operatorname{Ker}(\bar\partial)\ar[r]^{(\mathfrak{d}_b^{a})\oplus(\mathfrak{d}_c^{a})}&\ \ \ \ \Gamma(\mathcal{E}^{0,q}(\mathfrak{S}^{i-1}N^\ast({\lambda_b}\oplus {\lambda_c})))\\
\Gamma(\mathcal{E}^{0,q-1}(\mathfrak{S}^{i-1}N^\ast({\lambda_b}\oplus {\lambda_c})))\ar[ur]^{\bar\partial}\ar[r]^{(\mathfrak{d}_{a'}^{b})+(\mathfrak{d}_{a'}^{c})}&\ \ \ \ \ \Gamma(\mathcal{E}^{0,q-1}(\mathfrak{S}^{i-2}N^\ast(\lambda_{a'})))\cap \operatorname{Ker}(\bar\partial),}
\end{gather*}
where we for brevity put $\mathfrak{S}^\bullet N^\ast(\lambda_b\oplus\lambda_c):=\mathfrak{S}^\bullet N^\ast(\lambda_b)\oplus\mathfrak{S}^\bullet N^\ast(\lambda_c)$. Following the same line of arguments as in the case $s=1$, we easily f\/ind that $D_{a'}^a(v)(x_0)=0$ whenever $\mathfrak{j}^2_{x_0}v=0$.

In order to prove the claim about $gr D_{a'}^a$, we need to replace everywhere $\mathfrak{d}_{a'}^{a}$ by its restric\-tion~$\delta_{a'}^a$ and use~(\ref{second isom}) instead of~(\ref{first isom}).
\end{proof}

In order to get rid of the factor $s$ in (\ref{map of weighted jets I}) and (\ref{map of weighted jets II}), we shift the gradings by introducing $\mathfrak{gr}^i\mathbb{V}_{\mu_a}[\uparrow]:=\mathfrak{gr}^{i-q(a)}\mathbb{V}_{\mu_a}$ and $gr^i\mathbb{V}_{\mu_a}[\uparrow]:= gr^{i-q(a)}\mathbb{V}_{\mu_a}$. We can now rewrite the maps from~(\ref{map of weighted jets I}) and~(\ref{map of weighted jets II}) as
\begin{gather*}
\mathfrak{gr} D_{a'}^a\colon \ \mathfrak{gr}^\ell\mathbb{V}_{\mu_a}[\uparrow]\rightarrow\mathfrak{gr}^{\ell-1}\mathbb{V}_{\mu_{a'}}[\uparrow]\qquad \mathrm{and}\qquad  grD_{a'}^a\colon \ gr^\ell\mathbb{V}_{\mu_a}[\uparrow]\rightarrow gr^{\ell-1}\mathbb{V}_{\mu_{a'}}[\uparrow],
\end{gather*}
respectively, where $\ell\ge0$ is the corresponding integer. We also put
\begin{gather*}
 \mathfrak{gr}^\ell\mathbb{V}_{j,i}[\uparrow]= \!\!\bigoplus_{a\in S^k_j\colon  q(a)=i} \!\!\mathfrak{gr}^\ell\mathbb{V}_{\mu_a}[\uparrow],\qquad \mathfrak{gr}^\ell\mathbb{V}_j[\uparrow]=\bigoplus_{i=0}^{j}\mathfrak{gr}^\ell\mathbb{V}_{j,i}[\uparrow]
\end{gather*}
and
\begin{gather*}
 gr^\ell\mathbb{V}_{j,i}[\uparrow]= \!\!\bigoplus_{a\in S^k_j\colon  q(a)=i}\!\!  gr^\ell\mathbb{V}_{\mu_a}[\uparrow],\qquad gr^\ell\mathbb{V}_j[\uparrow]=\bigoplus_{i=0}^{j}gr^\ell\mathbb{V}_{j,i}[\uparrow].
\end{gather*}
We view $\mathfrak{gr} D_{a'}^a$ also as a map $\mathfrak{gr}^\ell\mathbb{V}_j[\uparrow]\rightarrow\mathfrak{gr}^{\ell-1}\mathbb{V}_{j+1}[\uparrow]$ by extending it from $\mathfrak{gr}^\ell\mathbb{V}_{\mu_a}[\uparrow]$ by zero to all the other summands. We put
\begin{gather*}
 \mathfrak{gr} D_j:= \!\!\sum_{a\in S^k_j,\ a'\in S^k_{j+1}\colon a<a'} \!\! \mathfrak{gr} D_{a'}^a\colon \ \mathfrak{gr}^\ell\mathbb{V}_j[\uparrow]\rightarrow\mathfrak{gr}^{\ell-1}\mathbb{V}_{j+1}[\uparrow]
\end{gather*}
and
\begin{gather*}
 \mathfrak{gr}( D_j)_{i'}^i\colon \ \mathfrak{gr}^\ell\mathbb{V}_{j,i}[\uparrow]\rightarrow\mathfrak{gr}^\ell\mathbb{V}_j[\uparrow]\xrightarrow{\mathfrak{gr} D_j}\mathfrak{gr}^{\ell-1}\mathbb{V}_{j+1}[\uparrow]\rightarrow \mathfrak{gr}^{\ell-1}\mathbb{V}_{j+1,i'}[\uparrow], 
\end{gather*}
where the f\/irst map is the canonical inclusion and the last map is the canonical projection.

Recall from Section \ref{section the relative Weyl group} that if $a<a'$, $a\in S^k_j$, $a'\in S^k_{j+1}$, then $q(a')\le q(a)+1$. This implies that $\mathfrak{gr}(D_j)_{i'}^{i}=0$ if $i\ne i'$ or $i'\ne i+1$. Then $\mathfrak{gr} D_j$ is
 \begin{gather*}
\begin{matrix}
\left(\begin{matrix}
\mathfrak{gr}^\ell\mathbb{V}_{j,0}[\uparrow]\\
\oplus \\
\mathfrak{gr}^\ell\mathbb{V}_{j,1}[\uparrow]\\
\oplus\\
\dots\\
\end{matrix}\right)
\begin{matrix}
\longrightarrow\\
\searrow\\
\longrightarrow\\
\searrow\\
\dots
\end{matrix}
\left(
\begin{matrix}
 \mathfrak{gr}^{\ell-1}\mathbb{V}_{j+1,0}[\uparrow]\\
 \oplus\\
 \mathfrak{gr}^{\ell-1}\mathbb{V}_{j+1,1}[\uparrow]\\
 \oplus\\
 \dots\\
\end{matrix}
\right)
\end{matrix},
 \end{gather*}
where the horizontal arrows and the diagonal arrows are $\mathfrak{gr}(D_j)^i_i$ and $\mathfrak{gr}(D_j)^i_{i+1}$, respectively.

We similarly def\/ine linear maps $gr D_j\colon gr^\ell\mathbb{V}_j[\uparrow]\rightarrow gr^{\ell-1}\mathbb{V}_{j+1}[\uparrow]$ and $gr (D_j)_{i'}^{i}\colon gr^\ell\mathbb{V}_{j,i}[\uparrow]\rightarrow gr^{\ell-1}\mathbb{V}_{j+1,i'}[\uparrow]$.

\begin{Remark}\label{remark dif op and ss}
Notice that
\begin{gather*}
\mathfrak{gr}^\ell\mathbb{V}_{j,i}[\uparrow] =\bigoplus_{a\in S^k_j\colon  q(a)=i}\mathfrak{gr}^\ell\mathbb{V}_{\mu_a}[\uparrow]=\bigoplus_{a\in S^k\colon  r(a)=j,\ q(a)=i}\mathfrak{gr}^{\ell-q(a)}\mathbb{V}_{\mu_a}\\
\hphantom{\mathfrak{gr}^\ell\mathbb{V}_{j,i}[\uparrow]}{}
=\bigoplus_{a\in S^k\colon  |a|=i+j,\ \ell(a)={n\choose2}-i}\mathfrak{gr}^{\ell-i}\mathbb{V}_{\mu_a}=\mathfrak{E}_1^{i+j,{n\choose2}-i}(\ell+j).
\end{gather*}
Put $p:=i+j$, $q:={n\choose2}-i$ and $r:=\ell+j$. Then we can view $\mathfrak{gr}(D_j)_{i}^i$ and $\mathfrak{gr}(D_j)_{i+1}^i$ as maps
\begin{gather*}
 \mathfrak{E}^{p,q}_1(r)\rightarrow \mathfrak{E}_1^{p+1,q}(r)\qquad \mathrm{and}\qquad \mathfrak{E}^{p,q}_1(r)\rightarrow \mathfrak{E}_1^{p+2,q-1}(r),
\end{gather*}
respectively. By the def\/inition of $\mathfrak{gr}(D_j)_{i}^i$ from Lemma~\ref{lemma diff op on graded jets}, it follows that we can view it as the dif\/ferential~$d_1$ on the f\/irst page of the spectral sequence from Proposition~\ref{thm first double complex on formal neigh}.

Suppose that $v\in \mathfrak{E}^{p,q}_1(s)$ satisf\/ies $d_1(v)=0$. Then we can apply the dif\/ferential $d_2$ living on the second page to $v+\operatorname{im}(d_1)$ and, comparing this with the def\/inition of~$\mathfrak{gr}(D_j)_{i+1}^i$ from Lemma~\ref{lemma diff op on graded jets}, we f\/ind that
\begin{gather} \label{d_2 as differential operator}
d_2(v+\operatorname{im}(d_1))=\mathfrak{gr}(D_j)_{i+1}^i(v)+\operatorname{im}(d_1).
\end{gather}

Similarly we f\/ind that $gr^\ell\mathbb{V}_{j,i}=F_1^{p,q}(s)$ where $p$, $q$ and $s$ are as above. Moreover we can view $gr (D_j)_{i}^i$ and $gr (D_j)_{i+1}^i$ as maps
\begin{gather*}
 F_1^{p,q}(s)\rightarrow F_1^{p+1,q}(s)\qquad \mathrm{and}\qquad F_1^{p,q}(s)\rightarrow F_1^{p+2,q-1}(s),
\end{gather*}
respectively. As the double complex from Proposition \ref{thm second double complex on formal neigh} is a sub-complex of the double complex from Proposition~\ref{thm first double complex on formal neigh} and $gr (D_j)_{i'}^i$ is the restriction of~$\mathfrak{gr} (D_j)_{i'}^i$ to the corresponding subspace, we see that $gr (D_j)_{i}^i$ coincides with the dif\/ferential on the f\/irst page of the spectral sequence from Proposition~\ref{thm second double complex on formal neigh} and that $gr(D_j)^i_{i+1}$ is related to the dif\/ferential on the second page just as $\mathfrak{gr}(D_j)^i_{i+1}$ is related to $d_2$.
\end{Remark}

The exactness of the complex (\ref{les of graded jetsI}) for each $\ell+j\ge0$ implies (see \cite{S}) the exactness of the $k$-Dirac complex at the level of inf\/inite weighted jets at any f\/ixed point. Following \cite{Sp}, we say that the $k$-Dirac complex is formally exact. Notice that for application in \cite{S}, the exactness of the sub-complex~(\ref{les of graded jets II}) for each $\ell+j\ge0$ is a crucial point in the proof of the local exactness of the descended complex and thus, in constructing the resolution of the $k$-Dirac operator.

\begin{Theorem}\label{theorem formal exactness}
The $k$-Dirac complex induces for each $\ell+j\ge0$ a long exact sequence
\begin{gather}\label{les of graded jetsI}
\mathfrak{gr}^{\ell+j}\mathbb{V}_0[\uparrow]\xrightarrow{\mathfrak{gr} D_0}\mathfrak{gr}^{\ell+j-1}\mathbb{V}_1[\uparrow]\rightarrow\dots\rightarrow\mathfrak{gr}^\ell\mathbb{V}_j[\uparrow]\xrightarrow{\mathfrak{gr} D_j}\mathfrak{gr}^{\ell-1}\mathbb{V}_{j+1}[\uparrow]\rightarrow\cdots
\end{gather}
 of finite-dimensional vector spaces. The complex contains a sub-complex
\begin{gather}\label{les of graded jets II}
gr^{\ell+j}\mathbb{V}_0[\uparrow]\xrightarrow{gr D_0}gr^{\ell+j-1}\mathbb{V}_1[\uparrow]\rightarrow\dots\rightarrow gr^\ell\mathbb{V}_j[\uparrow]\xrightarrow{gr D_j}gr^{\ell-1}\mathbb{V}_{j+1}[\uparrow]\rightarrow\cdots,
\end{gather}
which is also exact.
\end{Theorem}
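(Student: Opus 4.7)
I will derive the exactness of~\eqref{les of graded jetsI} at every interior term from the spectral sequence of the double complex in Proposition~\ref{thm first double complex on formal neigh}, perturbed by a second filtration on the complex itself. Fix $r:=\ell+j\ge0$ and write $(A^\bullet,\mathfrak{gr}D_\bullet)$ for the complex~\eqref{les of graded jetsI}, where $A^k:=\mathfrak{gr}^{r-k}\mathbb{V}_k[\uparrow]$. By Remark~\ref{remark dif op and ss} (applied to the present $r$) there is a canonical direct-sum decomposition
\begin{gather*}
A^k=\bigoplus_i\mathfrak{gr}^{r-k}\mathbb{V}_{k,i}[\uparrow]\cong\bigoplus_i\mathfrak{E}_1^{i+k,\binom{n}{2}-i}(r),
\end{gather*}
so $A^k$ is precisely the $(k+\binom{n}{2})$-th antidiagonal of the first page of that spectral sequence. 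Under this identification the differential $\mathfrak{gr}D_k$ splits, again by that remark, as $\sum_i\mathfrak{gr}(D_k)^i_i+\sum_i\mathfrak{gr}(D_k)^i_{i+1}$, where the first summand coincides with the $d_1$-differential and the second is the chain-level lift of $d_2$ provided by the relation~\eqref{d_2 as differential operator}.

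I then equip $A^\bullet$ with the finite decreasing filtration $F^sA^k:=\bigoplus_{i\ge s}\mathfrak{gr}^{r-k}\mathbb{V}_{k,i}[\uparrow]$. Because $\mathfrak{gr}(D_k)^i_i$ preserves the column index $i$ whereas $\mathfrak{gr}(D_k)^i_{i+1}$ strictly raises it by one, $\mathfrak{gr}D$ respects $F^\bullet$ and its associated graded is the pure $d_1$-part. The filtration is bounded, so the resulting spectral sequence converges strongly to $H^\bullet(A^\bullet)$. A direct inspection identifies its zeroth page in bidegree $(s,t)$ with $\mathfrak{E}_1^{2s+t,\binom{n}{2}-s}(r)$ and its $d_0$ with the original $d_1$ acting in the row $q=\binom{n}{2}-s$. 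Consequently, for every $s+t\ge1$ (where no edge-effect at the left end of $A^\bullet$ intervenes) its first page equals
\begin{gather*}
\mathfrak{E}_2^{2s+t,\binom{n}{2}-s}(r).
\end{gather*}
The bidegree $(2s+t,\binom{n}{2}-s)$ lies on the diagonal $(s+t)+\binom{n}{2}$ of the original spectral sequence. By Proposition~\ref{thm first double complex on formal neigh}(i),(iii) that spectral sequence degenerates on its second page and abuts to a total cohomology vanishing in degrees $>\binom{n}{2}$; therefore this $\mathfrak{E}_2$-entry vanishes whenever $s+t\ge1$. All higher pages of the filtration spectral sequence then vanish in positive total degree, giving $H^k(A^\bullet)=0$ for every $k\ge1$, which is the claimed exactness of~\eqref{les of graded jetsI}.

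The exactness of the subcomplex~\eqref{les of graded jets II} is obtained by running the identical argument with Proposition~\ref{thm first double complex on formal neigh} replaced by Proposition~\ref{thm second double complex on formal neigh}, which has the same degeneration and vanishing features; the second paragraph of Remark~\ref{remark dif op and ss} supplies the analogous identification of $gr(D_k)^i_i$ and $gr(D_k)^i_{i+1}$ with $d_1$ and a chain-level lift of $d_2$ on that smaller spectral sequence. The main obstacle is not the filtration argument but the bridge between the abstractly defined operators $D^a_{a'}$ of Lemma~\ref{lemma diff op} and the spectral-sequence differentials, namely the chain-level formula~\eqref{d_2 as differential operator} together with the weighted-order bound of Lemma~\ref{lemma diff op on graded jets}; once those are in hand, the proof reduces to the essentially formal two-step filtration bookkeeping outlined above.
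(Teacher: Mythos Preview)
Your proof is correct and follows essentially the same approach as the paper: both arguments rest on the identification of $\mathfrak{gr}^\ell\mathbb{V}_{j,i}[\uparrow]$ with $\mathfrak{E}_1^{i+j,\binom{n}{2}-i}(r)$ from Remark~\ref{remark dif op and ss} and on the vanishing of $\mathfrak{E}_2$ beyond the $\binom{n}{2}$-th diagonal supplied by Proposition~\ref{thm first double complex on formal neigh}(i),(iii). The only difference is packaging---the paper performs the ``kill the lowest nonzero component $v_i$'' induction by hand, whereas you encode the same induction as the spectral sequence of your column filtration $F^sA^k=\bigoplus_{i\ge s}\mathfrak{gr}^{r-k}\mathbb{V}_{k,i}[\uparrow]$, whose $E_1$-page is already the vanishing $\mathfrak{E}_2$ (so that, in fact, you never need the $d_2$-relation~\eqref{d_2 as differential operator} for the vanishing step, only for the identification of $\mathfrak{gr}D$ as a filtered map).
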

\begin{proof}
Let $v\in\mathfrak{gr}^\ell\mathbb{V}_j[\uparrow]$, $j\ge1$ be such that $\mathfrak{gr} D_j(v)=0$. Write $v=(v_0,\dots,v_j)$ with respect to the decomposition given above, i.e., $v_i\in\mathfrak{gr}^\ell\mathbb{V}_{j,i}[\uparrow]$. Assume that $v_0=v_1=\dots=v_{i-1}=0$ and that $v_i\ne0$. We have that $\mathfrak{gr} (D_{i}^i)(v_i)=0$ and $\mathfrak{gr} (D_{i+1}^i)(v_i)+\mathfrak{gr} (D_{i+1}^{i+1})(v_{i+1})=0$. If we view $v_i$ as an element of $\mathfrak{E}_1^{p,q}(s)$ as in Remark~\ref{remark dif op and ss}, we see that $d_1(v_i)=0$ and by~(\ref{d_2 as differential operator}), we f\/ind that $d_2(v_i)=0$.
By Proposition~\ref{thm first double complex on formal neigh}, the spectral sequence $\mathfrak{E}^{p,q}(r)$ collapses on the second page and by part~(i), we have that $\ker(d_2)=\operatorname{im}(d_2)$ beyond the ${n\choose2}$-th diagonal. By Remark \ref{remark dif op and ss} again, $\mathfrak{gr}^\ell\mathbb{V}_{j,i}[\uparrow]$ lives on the $({n\choose2}+j)$-th diagonal. We see that there are $t_{i-1}\in\mathfrak{gr}^{\ell+1}\mathbb{V}_{j-1,i-1}[\uparrow]$ and $t_{i}\in\mathfrak{gr}^{\ell+1}\mathbb{V}_{j-1,i}[\uparrow]$ such that $\mathfrak{gr}(D_{i-1}^{i-1})(t_{i-1})=0$ and $\mathfrak{gr}(D_{i}^{i-1})(t_{i-1})+\mathfrak{gr}(D_{i}^{i})(t_{i})=v_i$. Hence, we can kill the lowest non-zero component of $v$ and repeating this argument f\/initely many times, we see that there is $t\in\mathfrak{gr}^{\ell+1}\mathbb{V}_{j-1}[\uparrow]$ such that $v=\mathfrak{gr} D_{j-1}(t)$.

The proof of the exactness of the second sequence (\ref{les of graded jets II}) proceeds similarly. We only replace $\mathfrak{gr}^\ell\mathbb{V}_{j}[\uparrow]$ by $gr^\ell\mathbb{V}_{j}[\uparrow]$, $\mathfrak{gr}^\ell\mathbb{V}_{j,i}[\uparrow]$ by $gr^\ell\mathbb{V}_{j,i}[\uparrow]$, use that the second spectral sequence from Proposition~\ref{thm second double complex on formal neigh} has the same key properties as the spectral sequence from Proposition~\ref{thm first double complex on formal neigh} and the end of Remark~\ref{remark dif op and ss}.
\end{proof}

\subsection*{Acknowledgements}

The author is grateful to Vladim\'{\i}r Sou\v{c}ek for his support and many useful conversations. The author would also like to thank to Luk\'a\v{s} Krump for the possibility of using his package for the Young diagrams. The author wishes to thank to the unknown referees for many helpful suggestions which considerably improved this article. The research was partially supported by the grant 17-01171S of the Grant Agency of the Czech Republic.

\pdfbookmark[1]{References}{ref}
\LastPageEnding

\end{document}